\newcommand{\bburl}[1]{\textcolor{blue}{\url{#1}}}
\newcommand{\Z}{\ensuremath{\mathbb{Z}}}
\newcommand{\Q}{\mathbb{Q}}
\newcommand{\twocase}[5]{#1 \begin{cases} #2 & \text{{\rm #3}}\\ #4
&\text{{\rm #5}} \end{cases}   }
\newcommand\be{\begin{equation}}
\newcommand\ee{\end{equation}}
\newcommand\bea{\begin{eqnarray}}
\newcommand\eea{\end{eqnarray}}
\newcommand{\ncr}[2]{{#1 \choose #2}}
\newcommand{\js}[1]{{#1\overwithdelims () p}}
\tikzset{%
  line numbers/.store in=\fakelinenos,
  line numbers=50,
  line number shift/.store in=\fakelinenoshift,
  line number shift=5mm,
line number style/.style={text=black},
}
\def\volno{0}\fi
\def\volyear{2017}\fi
\def\pagno{000--000}\fi
\newfont{\footsc}{cmcsc10 at 8truept}
\newfont{\footbf}{cmbx10 at 8truept}
\newfont{\footrm}{cmr10 at 10truept}
\renewcommand\paragraph{\@startsection{paragraph}{4}{\z@}
                                    {2ex \@plus.5ex \@minus.2ex}
                                    {-1em}
                                    {\normalfont\normalsize\bfseries}}
\renewcommand\subparagraph{\@startsection{subparagraph}{5}{\parindent}
                                       {2ex \@plus.5ex \@minus .2ex}
                                       {-1em}
                                      {\normalfont\normalsize\bfseries}}
\newlength{\BiblioSpacing}
\renewenvironment{thebibliography}[1]{
\begin{oldthebibliography}{#1}
\setlength{\parskip}{\BiblioSpacing}
\setlength{\itemsep}{\BiblioSpacing}
}
{
\end{oldthebibliography}
}
\def\abstractname{Abstract -}   
\def\abstract{\begin{adjustwidth}{1cm}{1cm} \par    \footnotesize \noindent {\bf \abstractname}
\def\endabstract{ \end{adjustwidth} \smallskip }}
\newtheorem{theorem}{Theorem}[section]}
\newtheorem{definition}[theorem]{Definition}}
\newtheorem{lemma}[theorem]{Lemma}}
\newtheorem{conjecture}[theorem]{Conjecture}}
\title{\Large\bf Biases in Moments of the Dirichlet Coefficients in One- and Two-Parameter Families of Elliptic Curves}
\author{\sc S. J. Miller, Y. Weng}
\begin{document}
\setcounter{page}{1}
\maketitle
\thispagestyle{fancy}

\vskip 1.5em

\begin{abstract}
Elliptic curves arise in many important areas of modern number theory. One way to study them is take local data, the number of solutions modulo $p$, and create an $L$-function. The behavior of this global object is related to two of the seven Clay Millenial Problems: the Birch and Swinnerton-Dyer Conjecture and the Generalized Riemann Hypothesis. We study one-parameter families over $\Q(T)$, which are of the form $y^2=x^3+A(T)x+B(T)$, with non-constant $j$-invariant. We define the $r$\textsuperscript{th} moment of an elliptic curve to be $A_{r,E}(p) :=  \frac1{p} \sum_{t \bmod p} a_t(p)^r$, where $a_t(p)$ is $p$ minus the number of solutions to $y^2 = x^3 + A(t)x + B(t) \bmod p$. Rosen and Silverman showed biases in the first moment equal the rank of the Mordell-Weil group of rational solutions.

Michel proved that $pA_{2,E}(p)=p^2+O(p^{3/2})$. Based on several special families where computations can be done in closed form, Miller in his thesis conjectured that the largest lower-order term in the second moment that does not average to $0$ is on average negative. He further showed that such a negative bias has implications in the distribution of zeros of the elliptic curve $L$-function near the central point. To date, evidence for this conjecture is limited to special families. In this paper, we explore the first and second moments of some one- and two-parameter families of elliptic curves, looking to see if the biases persist and exploring the consequence these have on fundamental properties of elliptic curves. We observe that in all of the one- and two-parameter families where we can compute in closed form that the first term that does not average to zero in the second-moment expansion of the Dirichlet coefficients has a negative average.  In addition to studying some additional families where the calculations can be done in closed form, we also systematically investigate families of various rank. These are the first general tests of the conjecture; while we cannot in general obtain closed form solutions, we discuss computations which support or contradict the conjecture. We then generalize to higher moments, and see evidence that the bias continues in the even moments.
\end{abstract}

\tableofcontents

\begin{keywords}
Elliptic curves, Dirichlet coefficients, $L$-functions, biases
\end{keywords}

\begin{MSC}
60B10, 11B39, 11B05  (primary) 65Q30 (secondary)\end{MSC}

\section{Introduction}

The distribution of rational points on elliptic curves are not just of theoretical interest, but also have applications in encryption schemes. While it is often difficult to study one particular curve, frequently great progress can be made by looking at families of curves and computing averages. One powerful tool for such calculations are the associated $L$-functions. In particular, negative biases in the first moment of their Dirichlet coefficients are known in many cases (and conjectured in general) to be related to the rank. Recent investigations suggest that the second moment has similar biases, and these have applications to the distribution of the zeros of their $L$-functions.

We report on some of these calculations, as well as extensions to higher moments. We deliberately take a leisurely approach to make this paper reasonably self-contained, motivating the history and background material as problems of this nature are accessible with minimal pre-requisites (for more details on elementary number theory, see for example \cite{Da1, Da2, MT-B, NZM}). Our hope is to encourage others to continue these investigations in related families. For those interested in the code, email the authors.

\subsection{Rational Points on a Quadratic Equation}

For thousands of years, there has been interest in finding integer solutions to equations or systems of equations with integer coefficients. These are called Diophantine equations' perhaps the most famous is the Pythagorean theorem.

\begin{theorem} [Pythagorean Theorem]
    If $a$ and $b$ are the sides of a right triangle with hypotenuse $c$, then
\begin{eqnarray}
a^2+b^2\ = \ c^2.
\end{eqnarray}
\end{theorem}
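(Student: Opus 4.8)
The plan is to prove this by a dissection (area-counting) argument, which has the virtue of requiring essentially no prior theory. First I would take four congruent copies of the given right triangle and arrange them, one in each corner, inside a square of side length $a+b$, oriented so that along each edge of the large square a leg of length $a$ abuts a leg of length $b$. The four hypotenuses then bound a central quadrilateral, and the crux of the argument is to verify that this quadrilateral is itself a square of side $c$. Its four sides have length $c$ by construction; for the angles, note that at each vertex the straight edge of the big square is split into the two acute angles of the triangle plus the angle of the inner quadrilateral, and since the acute angles of a right triangle sum to $\pi/2$, the inner angle is $\pi - \pi/2 = \pi/2$. Computing the area of the large square in two ways then gives $(a+b)^2 = 4\cdot\tfrac12 ab + c^2$, and expanding and cancelling $2ab$ yields $a^2 + b^2 = c^2$.

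As an alternative route I would drop the altitude from the right angle to the hypotenuse. This splits the original triangle into two smaller right triangles, each of which shares one acute angle with the original and hence is similar to it by the AA criterion. The resulting proportions between corresponding sides, after clearing denominators, express $a^2$ and $b^2$ each as $c$ times one of the two segments into which the foot of the altitude divides the hypotenuse; adding these and using that the two segments sum to $c$ completes the proof. This version makes clearer why the identity is a statement about similarity, at the cost of a slightly more delicate picture.

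The main obstacle in either approach is not the algebra, which is a single line, but discharging the geometric hypotheses cleanly: in the dissection proof one must argue that the four triangles tile the frame of the large square with no overlap and no gap and that the inner figure is a genuine square rather than merely an equilateral quadrilateral, and in the similar-triangles proof one must check that the foot of the altitude falls strictly between the endpoints of the hypotenuse (which holds precisely because the other two angles are acute), so that the segment-addition step is legitimate. Both of these reduce to the fact that the interior angles of a triangle sum to $\pi$, so I would isolate that as the one external ingredient and present everything else from scratch.
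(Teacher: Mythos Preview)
Your two proofs are both correct and classical; either would serve perfectly well as a self-contained argument. However, the paper does not actually prove this statement at all. The Pythagorean Theorem is stated as a well-known result to motivate the discussion of Diophantine equations, and the proof environment that follows in the paper is attached to the subsequent \emph{lemma} on Pythagorean triples (the parametrization $a=k(m^2-n^2)$, $b=2kmn$, $c=k(m^2+n^2)$), not to the theorem itself. That proof proceeds by the rational parametrization of the unit circle via lines through $(-1,0)$, which is an entirely different statement.

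So there is nothing to compare your argument against: you have supplied a proof where the paper gives none. If the goal is to match the paper, simply omit the proof; if the goal is to make the exposition self-contained, your dissection argument is the cleaner choice, and your remarks about verifying the inner quadrilateral is a square and about the angle sum being the only external ingredient are exactly the points a careful write-up should address.
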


However, it is not immediately clear that there are any rational solutions, though a search quickly finds many. It distressed the Greeks that the right triangle with sides of integer length 1 and 1 has a hypotenuse of irrational length $\sqrt{2}$. By rescaling a rational triple we may assume that the sides are integral and relatively prime; we call such primitive Pythagorean triples, and can write down an explicit formula to generate all Pythagorean triples.


\begin{lemma}[Pythagorean Triples] Given any Pythagorean triple there exist positive integers $k$, $m$ and $n$ with $m > n$ such that
\begin{eqnarray}
a\ =\ k\cdot (m^{2}-n^{2}),\ \ \ \,b\ = \ k\cdot (2mn),\ \ \ \,c\ = \ k\cdot (m^{2}+n^{2}),
\end{eqnarray} where $m$ and $n$ are coprime and not both odd.
\end{lemma}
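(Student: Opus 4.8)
The plan is to reduce to the primitive case and then exploit unique factorization in $\Z$. Given a Pythagorean triple $(a,b,c)$ of positive integers, set $k=\gcd(a,b,c)$ and write $a=ka'$, $b=kb'$, $c=kc'$, so that $a'^2+b'^2=c'^2$ with $\gcd(a',b',c')=1$. First I would observe that such a primitive triple is in fact pairwise coprime: if a prime $p$ divided two of $a',b',c'$, then the relation $a'^2+b'^2=c'^2$ forces $p$ to divide the square of the third, hence the third, contradicting primitivity. So from now on the triple may be taken pairwise coprime, and $k$ reinstated at the end.

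Next I would pin down the parity. Modulo $4$ a square is $0$ or $1$; if $a'$ and $b'$ were both odd then $c'^2\equiv 2\pmod 4$, impossible, and they cannot both be even by coprimality, so exactly one of them is even. Relabel so that $b'$ is even and $a',c'$ are odd (this is where the asymmetry in the statement — $b$ carrying the $2mn$ factor — comes from, so strictly one proves the claim up to swapping $a$ and $b$). Now factor $b'^2=c'^2-a'^2=(c'-a')(c'+a')$. Since $a'$ and $c'$ are odd, both $c'-a'$ and $c'+a'$ are even, so writing $b'=2u$ gives $u^2=\frac{c'-a'}{2}\cdot\frac{c'+a'}{2}$.

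The heart of the argument is to show $\frac{c'-a'}{2}$ and $\frac{c'+a'}{2}$ are coprime: any common divisor divides their sum $c'$ and their difference $a'$, and $\gcd(a',c')=1$. Two coprime positive integers whose product is the perfect square $u^2$ must each be a perfect square (every prime occurs to an even power in $u^2$ and, by coprimality, lands entirely in one factor), so $\frac{c'+a'}{2}=m^2$ and $\frac{c'-a'}{2}=n^2$ for positive integers $m>n$. Solving gives $a'=m^2-n^2$, $c'=m^2+n^2$, and $b'^2=4m^2n^2$, i.e.\ $b'=2mn$. Finally I would check the side conditions: $\gcd(m,n)=1$, since a common factor of $m,n$ would divide $a'$ and $c'$; and $m,n$ are not both odd, for otherwise $a'=m^2-n^2$ and $c'=m^2+n^2$ would both be even. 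Multiplying back through by $k$ yields the stated parametrization.

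I expect the only genuinely non-mechanical step to be the passage from "$\frac{c'-a'}{2}$ and $\frac{c'+a'}{2}$ are coprime with square product" to "each is a square," which rests on unique factorization; the remainder is parity bookkeeping and $\gcd$ manipulations. The one point requiring a little care is the relabeling of $a$ and $b$, since the lemma as phrased fixes which coordinate receives the $2mn$ expression.
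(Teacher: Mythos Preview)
Your argument is correct and is the classical arithmetic proof: reduce to a primitive triple, use parity to isolate the even leg, factor $b'^2=(c'-a')(c'+a')$, and invoke unique factorization to write the coprime halves as squares $m^2$ and $n^2$. The paper, however, takes a genuinely different route: it passes to the unit circle $x^2+y^2=1$ via $(x,y)=(a/c,\,b/c)$ and parametrizes all rational points by sweeping lines of rational slope $t$ through the known point $(-1,0)$, obtaining $x=(1-t^2)/(1+t^2)$ and $y=2t/(1+t^2)$; writing $t=n/m$ in lowest terms and clearing denominators then gives the formulas. The geometric method is chosen deliberately because it foreshadows the chord-and-tangent process on elliptic curves that the paper is about to introduce, and it generalizes immediately to any conic with a rational point. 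Your approach, by contrast, is more self-contained on the arithmetic side: it delivers the coprimality and parity conditions on $m,n$ directly, whereas the paper's parametrization still requires a short (unwritten) argument to extract those side conditions from the lowest-terms representation of $t$. Your remark about the $a\leftrightarrow b$ relabeling is well taken and applies equally to the paper's version.
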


\begin{figure}[h]
\begin{minipage}[c]{0.4\linewidth}
\tikzset{every picture/.style={line width=0.5pt}}
\begin{tikzpicture}[scale=0.75,x=0.75pt,y=0.75pt,yscale=-1,xscale=1]
\draw   (101,100) -- (280.5,226) -- (101,226) -- cycle ;
\draw (82,164) node  [align=left] {a};
\draw (166,241) node  [align=left] {b};
\draw (200,145) node  [align=left] {c};
\end{tikzpicture}
\caption{A right triangle with side length of a, b and c}
\label{triangle}
\end{minipage}
\hfill
\begin{minipage}[c]{0.5\linewidth}
\begin{tikzpicture}[scale=1.5,cap=round,>=latex,baseline={(0,0)}]
    \draw[->] (-1.5cm,0cm) -- (1.5cm,0cm) node[right,fill=white] {$x$};
    \draw[->] (0cm,-1.5cm) -- (0cm,1.5cm) node[above,fill=white] {$y$};
    \draw[thick] (0cm,0cm) circle(1cm);
        \draw (-1.25cm,0cm) node[above=1pt] {$(-1,0)$}
    (1.25cm,0cm)  node[above=1pt] {$(1,0)$}
    (0cm,-1.25cm) node[fill=white] {$(0,-1)$}
    (0cm,1.25cm)  node[fill=white] {$(0,1)$};
\draw    (-1,-0) -- (0.8,0.6) ;
\draw    (0,-0) -- (0.8,0.6) ;
\fill[black] (-1,0) circle (0.5mm);
\fill[black] (0.8,0.6) circle (0.5mm)
node [above right] {$(x,y)$};
\fill[black] (0,0.33) circle (0.5mm)
node [above left] {$(0, t)$};
\end{tikzpicture}
\caption{A rational parametrization of the circle $x^2+y^2=1$}
\label{Circle}
\end{minipage}%
\end{figure}

\begin{proof} Finding integer Pythagorean triples (see Figure \ref{triangle}) is equivalent to finding rational points on the unit circle $x^2+y^2=1$; just let
\begin{eqnarray} \label{1.3}
  x \ = \ \frac{a}{c} \ \ \ \text{and} \ \ \ y \ = \ \frac{b}{c}.
\end{eqnarray}

We now find all rational points on the unit circle. Let $(x, y)$ denote an arbitrary point on the circle. In Figure \ref{Circle}, we know one rational solution\footnote{There are three other obvious rational solutions which we could have used; the standard convention is to use this one.}, $(-1, 0)$. The line through $(x, y)$ with slope $t$ is given by the equation
\begin{eqnarray}
y \ = \ t(1+x).
\end{eqnarray}
Hence, the other point of intersection of the line with the unit circle is
\begin{eqnarray}
1-x^2 \ = \ y^2 \ =\  t^2(1+x)^2.
\end{eqnarray}
Dividing each side by the root $(1+x)$, corresponding to the root $x=-1$, we get
\begin{eqnarray}
1-x \ = \ t^2(1+x).
\end{eqnarray}
Using the above relation, we find
\begin{eqnarray} \label{1.7}
x\ = \ \frac{1-t^2}{1+t^2} \ \ \ \ y \ = \ \frac{2t}{1+t^2}.
\end{eqnarray}
Thus if $x$ and $y$ are rational numbers, then the slope $t=y/(1+x)$ is also rational. Conversely, if $t$ is rational then $x$ and $y$ are rational. Hence, by letting $t$ range over the rational numbers numbers, we generate all the rational pairs on the circle (except $(-1,0)$ as in this case $t$ is infinite).
\end{proof}

Since we are able to generate the rational points on a quadratic equation, it is natural to study how to generate the rational points on a cubic equation, such as an elliptic curve.

\subsection{Introduction to Elliptic Curves}

We motivate studying elliptic curves in general by investigating first special cases, arising from right triangles
with area 1. We have the following equation:
\begin{eqnarray}
  1 \ = \  \frac{1}{2}ab.
\end{eqnarray}
We substitute $a=xc$ and $b=yc$ from (\ref{1.3}) and obtain
\begin{eqnarray}
  1 \ = \  \frac{1}{2}c^2xy.
\end{eqnarray}
Plugging in our results from (\ref{1.7}) gives
\begin{eqnarray}
  1 & \ = \ &  \frac{1}{2}c^2\left(\frac{1-t^2}{t^2+1}\right)\left(\frac{2t}{t^2+1}\right) \nonumber\\
   & \ = \ & \frac{c^2}{(t^2+1)^2}(t-t^3).
\end{eqnarray}
Divided both sides by $c^2/(t^2+1)^2$, we get
\begin{eqnarray}
    \left(\frac{t^2+1}{c}\right)^2 \ = \ t-t^3.
\end{eqnarray}
If we let $Y=(t^2+1)/c$ and $X=-t$ we obtain
\begin{eqnarray}
    Y^2 \ = \ X^3-X,
\end{eqnarray} which is an equation of an elliptic curve, which we formally define; see \cite{ST} for more details.


\begin{figure}[h]
\begin{center}
\end{center}
\end{figure}
\vspace{-5mm}

An elliptic curve in standard (or Weierstrass) form is the set of points $(x, y)$ satisfying the cubic equation
\begin{eqnarray}
    y^2\ =\ x^3 + ax + b,
\end{eqnarray} where $a,b\in\mathbb{Q}$ and the discriminant $4a^3+27b^2$ is not zero. This last condition is to avoid degenerate cases. For example,
we do not want $y^2 = x^2 (x-1)$ to be an elliptic curve; when we send $y$ to $xy$ we get $y^2 = x-1$, a parabola. More generally, it is of the form \be y^2 + a_1 x y + a_3 y \ = \ x^3 + a_2 x^2 + a_4 x + a_6. \ee
We study two kinds of families of elliptic curves: one-parameter and two-parameter. While we can change variables to put our curves in standard form, for convenience we often have an $x^2$ term. In the definitions below if we specialize the variables to integers we obtain an elliptic curve (provided of course that the discriminant is non-zero).

\begin{definition}[One-Parameter Family of Elliptic Curves] A one-parameter family is of the form
\begin{eqnarray}
\mathcal{E}:y^2\ = \ x^3 + A(T)x + B(T),
\end{eqnarray} with $A(T),B(T)\in\mathbb{Q}[T]$, which are polynomials of finite degree and rational coefficients.
\end{definition}

\begin{definition}[Two-Parameter Family of Elliptic Curves] A two-parameter family is of the form
\begin{eqnarray}
y^2 \ = \ x^3 + A(T, S)x + B(T, S),
\end{eqnarray} with $A(T, S),B(T, S)\in\mathbb{Q}[T,S]$.
\end{definition}

One of the reasons that there is such interest in elliptic curves is the following result.

\begin{theorem}[Mordell's Theorem]
The set of rational points on an elliptic curve is a finitely generated group.
\end{theorem}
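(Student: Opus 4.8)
The plan is to follow the classical descent argument, which splits into two essentially independent ingredients: a finiteness statement modulo multiplication by $2$, and a height function measuring the arithmetic complexity of a rational point. First I would prove the \emph{weak Mordell--Weil theorem}, that $E(\Q)/2E(\Q)$ is finite. Following \cite{ST}, one may change variables so that $x^3+ax+b$ has a rational root, giving $E$ a rational $2$-torsion point; one then writes down an explicit map from $E(\Q)$ to a product of copies of $\Q^\times/(\Q^\times)^2$ built from the relevant $x$-coordinate, verifies it is a group homomorphism whose kernel is contained in $2E(\Q)$, and shows its image is contained in the finite subgroup generated by $-1$ and the primes dividing the discriminant. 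For a general curve without rational $2$-torsion one reduces to this case by passing to the field generated by the $2$-torsion and descending, or equivalently invokes finiteness of the $2$-Selmer group; I would present the elementary version in detail and remark on the general case.

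Next I would introduce the naive logarithmic height: for $P=(x,y)\in E(\Q)$ with $x=m/n$ in lowest terms put $h(P)=\log\max(|m|,|n|)$, and $h(\mathcal{O})=0$. The properties to establish are: (i) for every $C$ the set $\{P\in E(\Q):h(P)\le C\}$ is finite, which is immediate since only finitely many rationals have bounded height and each $x$-value yields at most two points; (ii) for each fixed $Q\in E(\Q)$ there is a constant $C_Q$ with $h(P+Q)\le 2h(P)+C_Q$ for all $P$; and (iii) there is a constant $C$ with $h(2P)\ge 4h(P)-C$ for all $P$. Properties (ii) and (iii) follow by substituting into the explicit addition and duplication formulas and carefully bounding the numerators and denominators of the resulting coordinates; this estimation is the most tedious part of the argument but is entirely mechanical.

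Finally I would carry out the \emph{descent}. Pick coset representatives $Q_1,\dots,Q_r$ for the finite group $E(\Q)/2E(\Q)$. Given $P\in E(\Q)$, write $P=Q_{i_1}+2P_1$, then $P_1=Q_{i_2}+2P_2$, and so on. Combining (ii) applied with each $Q=-Q_i$ with (iii) yields an inequality of the form $h(P_{j+1})\le \tfrac14 h(P_j)+C'$ for a uniform constant $C'$, so the sequence $h(P_j)$ eventually drops below an absolute bound $B$ depending only on $E$. Hence every point of $E(\Q)$ is an integer linear combination of the finite set $\{Q_1,\dots,Q_r\}\cup\{R\in E(\Q):h(R)\le B\}$, which is finite by (i), so $E(\Q)$ is finitely generated. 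The main obstacle is the weak Mordell--Weil step: controlling the image of the connecting map genuinely uses arithmetic input — unique factorization in the elementary case, and finiteness of a class group in general — whereas the height inequalities, however laborious, need no new ideas.
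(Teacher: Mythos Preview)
The paper does not give a proof of Mordell's Theorem; it merely states it as classical background and refers the reader to \cite{ST}. So there is no ``paper's own proof'' to compare against.

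That said, your outline is the standard descent argument and is correct in structure: weak Mordell--Weil for $E(\Q)/2E(\Q)$, the three height properties, and Fermat-style descent to a finite generating set. This is precisely the treatment in Silverman--Tate \cite{ST}, which the paper cites. One caveat: you say ``one may change variables so that $x^3+ax+b$ has a rational root,'' but of course a general elliptic curve over $\Q$ need not have rational $2$-torsion, and no change of variables over $\Q$ will produce one. You acknowledge this by passing to the $2$-torsion field or invoking the Selmer group, which is the right fix, but be careful not to phrase the first step as if the rational $2$-torsion point is always available after a coordinate change. Otherwise the plan is sound and matches the reference the paper points to.
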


\begin{figure}[h]
\begin{minipage}[c]{0.4\linewidth}
\includegraphics[width=0.8\linewidth, height=70px]{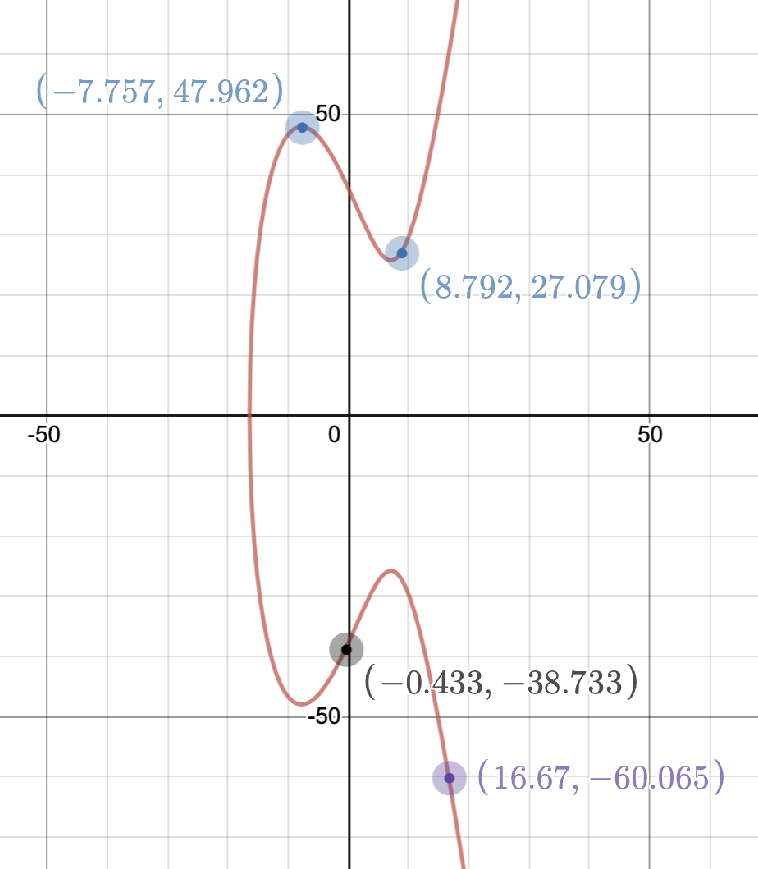}
  \caption{Points within the range $|x| \leq 20$ on Rank 0 Elliptic Curve $E: y^2=x^3+x^2-165x+1427$.}
  \label{fig:mordellone}
\end{minipage}
\hfill
\begin{minipage}[c]{0.4\linewidth}
\includegraphics[width=0.8\linewidth, height=70px]{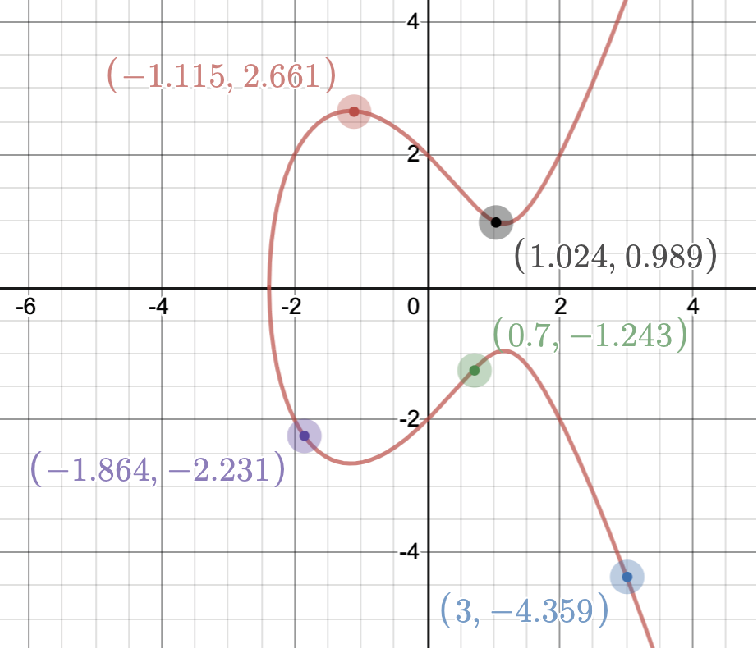}
      \caption{Points within the range $|x| \leq 20$ on Rank 1 Elliptic Curve $E: y^2=x^3-4x+4$.}
\label{fig:mordelltwo}
\end{minipage}%
\end{figure}

Figures \ref{fig:mordellone} and \ref{fig:mordelltwo} demonstrate the addition law for an elliptic curve of rank 0 and an elliptic curve of rank 1; the ``point at infinity'' acts as the identity element for addition. As the rank of the elliptic curve increases, there are typically more points within a certain range of $x$.

RSA cryptography, which is based on groups arising from primes or the product of two distinct primes $p$ and $q$, ($\mathbb{Z}/pq\mathbb{Z}$), was the gold standard in cryptography for years. However, it was also well-known that if we are able to factor a large number, then we can easily break RSA. Hence, it led to a search for other interesting groups with more complicated structure. Elliptic curves became the natural candidate because they have a group structure. Two points generate a third, but note that for the Pythagorean triples we only needed to find one point to generate them all. See \cite{RG} for more details.

Next, we define a characteristic of elliptic curves that is relevant to our paper. Often one can gain an understanding of a global object by studying a local one. In particular, for a prime $p$ we can look at how often we have pairs $(x,y)$ satisfying $y^2 = x^3 + ax + b \bmod p$. As half of the non-zero elements of $\mathbb{F}_p = \mathbb{Z}/p\mathbb{Z} = \{0, 1, 2, \dots, p-1\}$ are non-zero squares modulo $p$ and the other half are not squares, it is reasonable to expect that for a randomly chosen $x$ that half the time it will generate two solutions modulo $p$ and half the time it will generate zero. Thus we expect the number of pairs to be of size $p$, and it is valuable to look at fluctuations about this expected number.

\begin{definition}[Dirichlet Coefficients]  For $E$ an elliptic curve $y^2 = x^3 + ax + b$ and a prime $p$, we define the Dirichlet coefficients $a_E(p)$ by
\begin{eqnarray}
a_E(p) \ := \ p - |E(\mathbb{F}_p)|,
\end{eqnarray}
where $|E(\mathbb{F}_p)|$ is the number of solutions $(x,y)$ to $y^2 = x^3 + ax + b \bmod p$ with $x, y \in \mathbb{F}_p$. These are used in constructing the associated $L$-function to the elliptic curve, $L(E,s) = \sum_n a_E(n)/n^s$, which generalizes the Riemann zeta function $\zeta(s) = \sum_n 1/n^s$.
\end{definition}

There is a very useful formula for $a_E(p)$ (if the curve $E$ is clear we often suppress the subscript adn write $a(p)$ or $a_p$). The Legendre symbol $\js{a}$ is zero if $a$ is zero modulo $p$, 1 if $a$ is a non-zero square modulo $p$, and $-1$ otherwise. Thus $1 + \js{x^3+ax+b}$ is the number of solutions modulo $p$ for a fixed $x$. If we sum this over all $x$ modulo $p$ we obtain $|E(\mathbb{F}_p)|$, and thus
\begin{eqnarray}\label{eq:jacobisumec}
a_E(p) \ = \ -\sum_{x \bmod p} \js{x^3 + ax + b}.
\end{eqnarray}


Much is known about the $a(p)$'s. We focus on their size and average behavior, though recent breakthroughs have determined much more about their distribution.

\begin{theorem}[Hasse, 1931]
The Riemann Hypothesis for finite fields holds if $E$ is an elliptic curve and $p$ a prime; we have
\begin{eqnarray}
    |a_{E}(p)|\ \leq\ 2\sqrt{p}.
\end{eqnarray}
\end{theorem}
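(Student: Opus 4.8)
The plan is to deduce Hasse's bound from the classical ``positive-definite quadratic form'' argument built on the Frobenius endomorphism; I sketch this rather than the (also available) elementary polynomial method of Stepanov. Work over the algebraic closure $\overline{\mathbb{F}}_p$ and let $\phi\colon E\to E$ be the $p$-power Frobenius isogeny, $(x,y)\mapsto(x^p,y^p)$, fixing the point at infinity. The starting observation is that a point of $E(\overline{\mathbb{F}}_p)$ is defined over $\mathbb{F}_p$ exactly when its coordinates are fixed by $t\mapsto t^p$; hence $E(\mathbb{F}_p)$ (with the point at infinity adjoined) is precisely the kernel of the isogeny $\phi-1$.

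Next I would invoke the basic theory of isogenies of elliptic curves over $\overline{\mathbb{F}}_p$: a nonzero isogeny $\psi$ is surjective on points, with $\#\ker\psi$ equal to its separable degree, which equals $\deg\psi$ when $\psi$ is separable. One checks that $\phi$ has degree $p$ (and is purely inseparable), while $\phi-1$ is separable because it acts on the one-dimensional space of invariant differentials by $-1\neq 0$. Therefore $|E(\mathbb{F}_p)|+1=\#\ker(\phi-1)=\deg(\phi-1)$, the $+1$ accounting for the point at infinity, which is absent from the affine count used to define $a_E(p)$; equivalently $a_E(p)=p+1-\deg(\phi-1)$.

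The heart of the matter is that the degree map $\deg\colon\mathrm{End}(E)\to\mathbb{Z}_{\geq0}$ is a positive-definite quadratic form: it vanishes only at $0$, and the pairing $\langle\psi,\chi\rangle:=\deg(\psi+\chi)-\deg\psi-\deg\chi$ is $\mathbb{Z}$-bilinear. Since $\deg\phi=p$ and $\deg(\pm1)=1$, the quadratic-form identity gives $\deg(\phi-1)=p+1-\langle\phi,1\rangle$, so comparing with the previous paragraph yields $\langle\phi,1\rangle=a_E(p)$. Nonnegativity of the form then gives, for all integers $m,n$ not both zero,
\[
0\ \le\ \deg(m\phi-n)\ =\ m^2p-mn\,a_E(p)+n^2 .
\]
Dividing by $n^2$ and using that $\mathbb{Q}$ is dense in $\mathbb{R}$, the real quadratic $pt^2-a_E(p)\,t+1$ is everywhere nonnegative, so its discriminant satisfies $a_E(p)^2-4p\le 0$, i.e.\ $|a_E(p)|\le 2\sqrt p$.

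The main obstacle is not the one-line discriminant computation but supplying the geometric inputs honestly: that nonzero isogenies are surjective with kernel of size the separable degree, that $\phi-1$ is separable, and---above all---that $\deg$ is positive-definite on $\mathrm{End}(E)$, which ultimately rests on $\mathrm{End}(E)$ being a finitely generated torsion-free $\mathbb{Z}$-module together with the multiplicativity $\deg(\psi\chi)=\deg\psi\,\deg\chi$. If one wishes to avoid this machinery, the alternative is Stepanov's method: bound the character sum $\sum_{x}\left(\frac{x^3+ax+b}{p}\right)$ from \eqref{eq:jacobisumec} directly, by exhibiting an auxiliary polynomial over $\mathbb{F}_p$ with many high-multiplicity roots and comparing its degree to its number of roots---more computational, but using nothing beyond polynomial algebra over $\mathbb{F}_p$.
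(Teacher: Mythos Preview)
Your sketch is the standard Frobenius/positive-definite-degree argument and is correct as stated; the only places I would tighten it are cosmetic (e.g., checking that the bilinearity of $\langle\cdot,\cdot\rangle$ genuinely gives $\langle\phi,-1\rangle=-\langle\phi,1\rangle$, and being explicit that $\deg(m\phi-n)=m^2\deg\phi+n^2\deg 1 - mn\langle\phi,1\rangle$ follows from the quadratic-form identity).

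As for comparison with the paper: there is nothing to compare. The paper does not prove Hasse's theorem; it states it as a classical result and offers only a heuristic (the ``philosophy of square-root cancellation'': the Legendre-sum expression for $a_E(p)$ is a sum of $p$ signed terms, so one \emph{expects} size $\sqrt{p}$). That is motivation, not a proof. Your approach---essentially the argument in Silverman's \emph{Arithmetic of Elliptic Curves}---supplies what the paper omits, and your closing remark about the Stepanov alternative is also accurate.
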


Hasse's theorem is very similar to the Central Limit Theorem, which itself is an example of the philosophy of square-root cancelation: if we have $N$ objects of size 1 with random signs, then frequently the sum is of size 0 with fluctuations on the order of $\sqrt{N}$.\footnote{Results such as these are correct up to the power of $N$ but can miss logarithms; } In our setting, we expect half of the time $\js{x^3 + A(t)x + B(t)}$ equals 1, and the other half time it is -1. As we have $p$ terms of size 1 with random signs, the results should be of size $\sqrt{p}$, which is Hasse's theorem. We often use big-Oh notation to denote sizes of the quantities we study.

\begin{definition}[Big-Oh Notation] We say $f = O(g(x))$, read $f$ is big-Oh of $g$, if there exists an $x_0$ and a $B > 0$ such that for all $x \ge x_0$ we have $|f(x)| \le B g(x)$.
\end{definition}


Last but not least, we define some other important characteristics of elliptic curves. As the $a_{\mathcal{E}_t}(p)$ only depend on $t \bmod p$ by \eqref{eq:jacobisumec}, for a fixed prime $p$ we only need to study specializations of $T$ modulo $p$.

\begin{definition}[Moment of a One-Parameter Family] Let $\mathcal{E}$ be a one parameter family of elliptic curves $y^2 = x^3 + A(T)x + B(T)$ over $\mathbb{Q}(T)$, with $\mathcal{E}_t$ the specialized curves. For each positive integer $r$, we define the r\textsuperscript{{\rm th}} moment:
\begin{eqnarray}
    A_{\mathcal{E},r(p)}\ :=\ \frac{1}{p}\sum\limits_{t\bmod p}a_{\mathcal{E}_t}(p)^r.
\end{eqnarray}
\end{definition}

There is a natural extension to two-parameter families, where we sum over $s$ and $t$ modulo $p$.

We conclude with one final concept and result, the rank.

\begin{definition}[Geometric Rank of $E$] The group of rational solutions of an elliptic curve $E$, denoted $E(\mathbb{Q})$, can be written as $r$ copies of $\Z$ plus a finite torsion part:
\begin{eqnarray}
E(\mathbb{Q}) \ \cong\ \mathbb{Z}^r \times E(\mathbb{Q})_{\rm tors};
\end{eqnarray} $r$ is the geometric rank of $E$.
\end{definition}

The analytic rank of $E$ is the order of vanishing of the associated $L$-function at the central point. Similar to many other problems in mathematics, frequently one of these objects is easier to study than the other, and the hope is that there is a connection between them. This is true for elliptic curves; we turn to this next, and see the key role played by the $a_E(p)$'s.

\subsection{From Random Matrix Theory to the Birch and Swinnerton-Dyer Conjecture}

The Birch and Swinnerton-Dyer conjecture is one of the seven Clay Millenial Problems; these were formulated in the spirit of Hilbert's successful list from the start of the twentieth century, and are meant to inspire and highlight important mathematics. It is based on a $L$-function of an elliptic curve, connecting analysis to geometry, two great different fields of mathematics. Before stating it, we first describe some of the problems and methods of modern number theory to motivate both why we care about this conjecture, as well as the main topic of this paper. For more on this story, see \cite{BFMT-B, FM}.

Given the inability to theoretically describe the energy levels of atoms more complicated than hydrogen, due to the complexities of the mathematics, physicists developed statistical approaches. Based on extensive numerical data, Wigner proposed that one could model nuclear physics through random matrices; that the behavior of eigenvalues in these matrix ensembles described the behavior of energy levels. Briefly, from quantum mechanics we can write down the equation \be H \psi_n \ = \ E_n \psi_n, \ee where $H$ is the Hamiltonian, $\psi_n$ are the energy eigenstates and $E_n$ the energy levels. Unfortunately the nuclear forces leading to the operator $H$ are too complicated to allow us to write it down and analyze it explicitly. Wigner's great insight was to instead consider a probability distribution on $N \times N$ matrices, calculate averages over these ensembles, scale them appropriately so that limiting behavior exists as $N\to\infty$, and the appeal to a central limit theorem type of law to show that a typical operator will have behavior close to the system average. These predictions were confirmed experimentally in studies of the energy levels of heavy nuclei.

Amazingly, similar results were found in the spacings between the zeroes of the Riemann Zeta function, which connects integers to primes and helps us understand the mysterious distribution of primes, seem to follow the RMT prediction too.

\begin{definition} [Riemann Zeta Function] For ${\rm Re}(s) > 1$
\begin{eqnarray}\zeta(s) \ :=\ \sum_{n=1}^{\infty} \frac{1}{n^s}\ =\ \prod_{p\ {\rm prime}}\left(1 - \frac1{p^s}\right)^{-1}.
\end{eqnarray}
\end{definition}

The zeta function is defined as the sum over integers above, but its utility comes from the product expansion (which follows immediately from the geometric series formula and the fundamental theorem of arithmetic, which states each integer can be written uniquely as a product of prime powers in increasing order). Initially defined only for ${\rm Re}(s) > 1$, the zeta function can be analytically continued to the entire complex plane with a simple pole of residue 1 at $s=1$:
\begin{eqnarray}
\xi(s)\ :=\ \Gamma\left(\frac{s}{2}\right)\pi^{-\frac{s}{2}} \zeta(s)\ =\ \xi(1-s),
\end{eqnarray}
where the line ${\rm Re}(s) = 1/2$ is the critical line, and $s=1/2$ the central point. The Riemann hypothesis states all the non-trivial zeros of $\zeta(s)$ have real part equal to 1/2 (due to the presence of the Gamma factor, $\zeta(s)$ vanishes at the negative even integers). By doing a contour integral of the logarithmic derivative of $\zeta(s)$ and shifting contours, one obtains the Explicit Formula, which relates a sum over zeros to a sum over prime. Figures \ref{fig:swave} and \ref{fig:odlyzko} show similar behavior in the spacings between energy levels of heavy nuclei and spacings between zeros of $\zeta(s)$.

\begin{figure}[h]
\begin{minipage}[c]{0.4\linewidth}
\includegraphics[width=1\linewidth, height=100px]{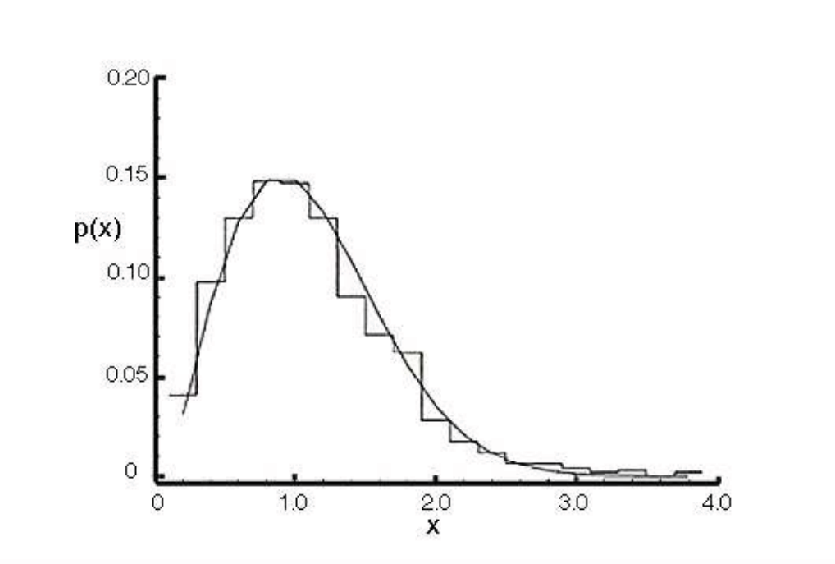}
 \caption{ A Wigner distribution fitted to the spacing distribution of 932 s-wave resonances in the interaction Uranian + n at energies up to 20 keV.}
\label{fig:swave}
\end{minipage}
\hfill
\begin{minipage}[c]{0.4\linewidth}
\includegraphics[width=0.8\linewidth, height=100px]{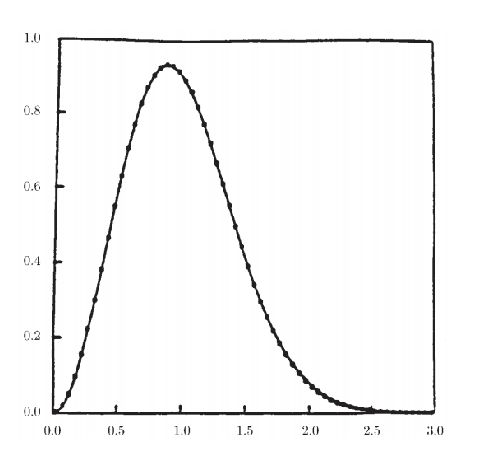}
 \caption {70 million spacings between adjacent zeros of $\zeta(s)$, starting at the $10^{20}$\textsuperscript{th} zero. The solid curve is the RMT prediction for the GUE ensemble, and the dots are the zeta zeros (from Odlyzko).}
 \label{fig:odlyzko}
\end{minipage}%
\end{figure}


We now move on to discuss other $L$-functions; for more on these see for example \cite{Kn}. With the normalization below, the critical strip is $0 < {\rm Re}(s) < 2$, and the functional equation of the completed elliptic curve $L$-function relates values at $s$ to those at $2-s$.

\begin{definition}[$L$-function]
The Hasse-Weil $L$-function of an elliptic curve $E: y^2 + a_1xy + a_3y = x^3 + a_2x^2 + a_4x +a_6$ with coefficient $a_{E}(p)$ and discriminant $\Delta$,
\begin{eqnarray}
\Delta\ :=\ {-b_2}^2{b_8}-8{b_4}^3-27{b_6}^2+9{b_2}{b_4}{b_6},
\end{eqnarray}
where ${b_2}={a_1}^2+4a_4$, $b_4=2a_4+a_1a_3$ and $b_6={a_3}^2+4{a_6}$, is defined as
\begin{eqnarray}
L(s,E) & \ :=\ & \prod_{p|\Delta} \frac{1}{1 - a_p p^{-s}} \prod_{p
\nmid \Delta} \frac{1}{1 - a_p p^{-s} + p^{1-2s}}.
\end{eqnarray}
\end{definition}

Similar to the zeta function, these $L$-functions take local data and create a global object, from which much can be deduced; in Appendix \ref{appendix} we give an example of how we can piece together local information to construct a global object which, if a closed form expression exists, can allow us to deduce information about the objects of interest. The most important of these inferences is the famous Birch and Swinnerton-Dyer conjecture.


\begin{conjecture}[Birch and Swinnerton-Dyer Conjecture] The order of vanishing of\newline $L(E,s)$ at the central point $s=1$ is equal to the rank of the group of rational points $E(\mathbb{Q})$.
\end{conjecture}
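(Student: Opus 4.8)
The statement above is the Birch and Swinnerton-Dyer conjecture, one of the seven Clay Millennium Problems, so there is no known proof; I can only describe the strategy that yields the strongest partial results and indicate exactly where it stalls.

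First I would make $L(E,s)$ a legitimate analytic object near $s=1$. By the modularity theorem (Wiles, Taylor--Wiles, Breuil--Conrad--Diamond--Taylor) every $E/\Q$ is modular, so $L(E,s)$ continues to an entire function with a functional equation relating $s$ to $2-s$; this gives meaning to the order of vanishing at $s=1$ and to the sign $\varepsilon(E)$ of the functional equation. Modularity also hands us a nonconstant morphism $X_0(N)\to E$, which will be the only systematic source of rational points in what follows.

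Next I would treat the case $\mathrm{ord}_{s=1}L(E,s)\le 1$. The plan here is to choose an imaginary quadratic field $K$ satisfying the Heegner hypothesis for the conductor $N$ and with $L(E^{K},1)\ne 0$ (possible by the nonvanishing theorems of Bump--Friedberg--Hoffstein and Murty--Murty), push a Heegner point from $X_0(N)$ down to a point $P_K\in E(K)$, and invoke the Gross--Zagier formula to identify its canonical height with a nonzero multiple of $L'(E/K,1)$. Then Kolyvagin's Euler system of Heegner points shows that $P_K$ nontorsion forces $\mathrm{rank}\,E(K)=1$ and $\#\mathrm{Sha}(E/K)<\infty$. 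Combining this with the factorization $L(E/K,s)=L(E,s)\,L(E^{K},s)$ gives the rank part of BSD for $E/\Q$ in analytic rank $0$ and $1$. To upgrade this to the full conjectural formula for the leading Taylor coefficient (the expression involving the regulator, the real period, the Tamagawa numbers, the torsion order, and $\#\mathrm{Sha}(E/\Q)$) I would then feed in the Iwasawa main conjecture for $\mathrm{GL}_2$ --- Kato's divisibility together with the reverse divisibility of Skinner--Urban --- to pin down $\#\mathrm{Sha}(E/\Q)$ away from a few small primes, handling the remaining primes separately; this is what is currently known, and only in many rather than all rank $0$ and rank $1$ cases.

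The hard part will be analytic rank $\ge 2$, which is precisely why the conjecture is open. The Heegner-point machine produces at most one independent point, and there is no known Euler system or comparable device that manufactures the expected two or more independent rational points when $L(E,s)$ vanishes to higher order; worse, the finiteness of $\mathrm{Sha}(E/\Q)$ is not known in general, so even ``$\mathrm{ord}_{s=1}L(E,s)=0 \Rightarrow \mathrm{rank}\,E(\Q)=0$'' cannot be promoted to the full formula unconditionally. The best general unconditional input is the $p$-parity theorem of the Dokchitsers and of Nekov\'a\v{r}, matching the parity of the $p^{\infty}$-Selmer rank with $\varepsilon(E)$, but this controls only the rank modulo $2$. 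Absent a higher-rank analogue of the Euler system method --- or a genuinely different bridge between the analytic and arithmetic sides --- the conjecture appears out of reach, which is exactly why in this paper we retreat to the statistical questions about the moments $A_{\mathcal{E},r}(p)$, which remain linked, through the explicit formula, to the distribution of the very central-point zeros that BSD is about.
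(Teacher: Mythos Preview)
Your proposal is appropriate: the paper does not attempt any proof of this statement---it is stated and labeled as a \emph{conjecture} (indeed, a Clay Millennium Problem), and the paper simply records it as motivation before moving on to the Bias Conjecture. You correctly recognize that no proof exists, and your summary of the known partial results (modularity, Gross--Zagier, Kolyvagin, Iwasawa-theoretic input from Kato and Skinner--Urban, the parity theorems) and of the obstruction in analytic rank $\ge 2$ is accurate and goes well beyond anything the paper discusses. There is nothing to compare here, since the paper offers no argument at all.
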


In other words, Birch and Swinnerton-Dyer conjectured that the geometric rank of an elliptic curve equals its analytic rank.


Unfortunately, it is not known what values of rank $r$ are possible for an elliptic curve. In 1938, Billing found an elliptic curve with rank $3$. The largest known rank increased over the next few decades. The largest is due to Elkies in 2006, and is rank at least $28$. Interestingly, there are not examples of elliptic curves for each rank smaller than $28$ (see \cite{Du} for a more comprehensive historical data on elliptic curve records). While originally it was thought that the ranks are unbounded, now some conjecture that this is not the case.
\vspace{-2.5mm}
\begin{center}
\begin{tikzpicture}[scale=0.8]
\begin{axis}[
x tick label style={/pgf/number format/.cd,%
          scaled x ticks = false,
          set thousands separator={},
          fixed},
    title={Elliptic Curve Records},
    xlabel={Year},
    ylabel={Rank $>=$},
    xmin=1930, xmax=2019,
    ymin=0, ymax=30,
    xtick={1930, 1950, 1970, 1990, 2019},
    ytick={0,5,10,15,20,25,30},
    ymajorgrids=true,
    grid style=dashed,
]
\addplot+[const plot, no marks, thick] coordinates {(1938, 3)(1945,4)(1974,6)(1975,7)(1977,8)(1977,9)(1982,12)(1986,14)(1992,15)(1992,17)(1992,19)(1993,20)(1994,21)(1997,22)(1998,23)(2000,24)(2006,28)(2019,28)} node[below=1.15cm,pos=.76,black] {\footnotesize};
\end{axis}
\end{tikzpicture}
\end{center}

\subsection{The Bias Conjecture}

We are now ready to state our main object of study, the bias conjecture. The original motivation for it comes from the distribution of low-lying zeros in families of $L$-functions; this is part of the $n$-level densities introduced by Katz and Sarnak \cite{KS1, KS2}. The next few paragraphs are thus more technical and assume some familiarity of the subject, and may be safely skipped.

Similar to using the Riemann Zeta function to understand the distribution of primes, we use the Explicit Formula, which relates sums over primes of the Dirichlet coefficients $a_E(p)$ and $a_E^2(p)$ to sums of test functions over zeros, to deduce information about the zeros. We look at a one-parameter family $\mathcal{E}: y^2=x^3 + A(t)x + B(t)$, with $t \in [N, 2N]$, and where $\phi$ is an even Schwartz-class function that decays rapidly (this means $\phi$, and all of its derivatives, decay faster than $1/(1+|x|)^A$ for any $A>0$), $\log R$ is the average log conductor (and tells us how to scale the zeros near the central point $s=1$), and $1+i\gamma$ are the non-trivial zeros of the $L$-function:
\begin{align}
& \frac1N \sum_{t=N}^{2N}\sum_{\gamma_t}
\phi\left(\gamma_t\frac{\log R}{2\pi}\right) \ = \
\widehat{\phi}(0) + \phi(0) - \ \frac2N \sum_{t=N}^{2N} \sum_p
\frac{\log p}{\log R} \frac1{p} \widehat{\phi}\left(\frac{\log p}{\log
R} \right)
a_t(p) \nonumber\\
& \ \ \ - \ \frac2N \sum_{t=N}^{2N} \sum_p \frac{\log p}{\log R}
\frac{1}{p^2} \widehat{\phi}\left(\frac{2\log p}{\log R} \right)
a_t(p)^2 + O\left(\frac{\log\log R}{\log R}\right);
\end{align}
the result above comes from integrating the logarithmic derivative of the $L$-function against the Schwartz test function $\phi$ and then shifting contours. If the generalized Riemann Hypothesis is true then $\gamma \in \mathbb{R}$.

Note that if the test function is non-negative, then dropping the contributions of $\phi$ at all the zeros that are not at the central point removes a non-negative amount from the left hand side. The right hand side then becomes an upper bound for the average rank of the elliptic curves in the family:
\begin{align}
& \frac1N \sum_{t=N}^{2N}\sum_{\gamma_t = 0}
\phi\left(0\right) \ \le \
\widehat{\phi}(0) + \phi(0) - \ \frac2N \sum_{t=N}^{2N} \sum_p
\frac{\log p}{\log R} \frac1{p} \widehat{\phi}\left(\frac{\log p}{\log
R} \right)
a_t(p) \nonumber\\
& \ \ \ - \ \frac2N \sum_{t=N}^{2N} \sum_p \frac{\log p}{\log R}
\frac{1}{p^2} \widehat{\phi}\left(\frac{2\log p}{\log R} \right)
a_t(p)^2 + O\left(\frac{\log\log R}{\log R}\right),
\end{align}  \normalsize
which means that
\begin{align}
&
\phi\left(0\right) \ast {\rm AverageRank}(N) \ \le \
\widehat{\phi}(0) + \phi(0) - \ \frac2N \sum_{t=N}^{2N} \sum_p
\frac{\log p}{\log R} \frac1{p} \widehat{\phi}\left(\frac{\log p}{\log
R} \right)
a_t(p) \nonumber\\
& \ \ \ - \ \frac2N \sum_{t=N}^{2N} \sum_p \frac{\log p}{\log R}
\frac{1}{p^2} \widehat{\phi}\left(\frac{2\log p}{\log R} \right)
a_t(p)^2 + O\left(\frac{\log\log R}{\log R}\right).
\end{align}  \normalsize

Thus when $\phi$ is non-negative, we obtain a bound for the
average rank in the family by restricting the sum to be only over zeros at the central point. The error $O\left(\log \log R/\log R\right)$ comes from trivial estimation and ignores probable cancelation, and we expect $O\left(1/\log R\right)$ or smaller to be the correct magnitude. For most one-parameter families of elliptic curves we have $\log R \sim \log N^a$ for some integer $a$, where $t \in [N, 2N]$.

The main term of the first and second moments of the $a_t(p)$ give $\phi\left(0\right) \ast {\rm AverageRank}(N)$ and $-\frac12 \phi(0)$; this is a standard application of the prime number theorem to evaluate the resulting sums; for details see the appendices on prime sums in \cite{Mi1}. This is reminiscent of the Central Limit Theorem, where so long as some weak conditions are satisfied for independent, identically distributed random variables, their normalized sum converges to the standard normal. In that setting, if the moments are finite we can always adjust the distribution to have mean zero and variance one, and it is only these moments that enter the limiting analysis. The higher moments \emph{do} have an impact, but it is only through the lower order terms, which control the \emph{rate} of convergence.

We have a similar situation here. First, the higher moments of the Dirichlet coefficients contribute in the big-Oh terms $O\left(1/\log R\right)$. Second, the lower order terms in the first and second moments can contribute, but not to the main term in the expansions above. Explicitly, assume the second moment of $a_t(p)^2$ is $p^2 - m_\mathcal{E}p + O(1)$, $m_\mathcal{E} > 0$. We have already handled the contribution from $p^2$, and $-m_\mathcal{E}p$ contributes
\begin{eqnarray}
S_2 &\ \sim\ & \frac{-2}{N} \sum_p \frac{\log p}{\log
R}\widehat{\phi}\left(2\frac{\log p}{\log R}\right) \frac{1}{p^2}
\frac{N}{p}(-m_{\mathcal{E}}p) \nonumber\\ &=&
\frac{2m_{\mathcal{E}}}{\log R} \sum_p
\widehat{\phi}\left(2\frac{\log p}{\log R} \right) \frac{\log
p}{p^2}.
\end{eqnarray}

We thus have a prime sum which converges, and this sum is bounded by $\sum_p \log p/{p^2}$. Thus, $S_2$ converges and there is a contribution of size $1/\log(R)$. This is the motivation behind why the Bias conjecture, which S. J. Miller conjectured in his thesis \cite{Mi1}, matters, as a bias has an impact in our estimates on the rank and the behavior of zeros near the central point.

\begin{conjecture}[Second Moment Elliptic Curve Bias Conjecture] Consider a family of elliptic curves. Then the largest lower term in the second moment expansion of a family which does not average to $0$ is on average negative. \end{conjecture}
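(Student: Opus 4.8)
\bigskip
\noindent\textbf{A possible approach.} The plan is to attack the conjecture cohomologically: express the second moment in terms of traces of Frobenius on $\ell$-adic cohomology of the elliptic surface attached to $\mathcal{E}$, separate a \emph{rigid} contribution (fixed powers of $p$ with integer coefficients, which do not average to $0$ over $p$) from a \emph{fluctuating} one (which averages to $0$ by equidistribution), and then show that the leading rigid term below $p^{2}$ is negative.

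First I would open the moment: using $a_t(p)=-\sum_{x}\js{x^3+A(t)x+B(t)}$ we get $p\,A_{\mathcal{E},2}(p)=\sum_{x,y\bmod p}\sum_{t\bmod p}\js{f_t(x)f_t(y)}$ with $f_t(x)=x^3+A(t)x+B(t)$. For each $(x,y)$ the inner sum is a one-variable character sum attached to $P_{x,y}(t):=f_t(x)f_t(y)$, of degree bounded in terms of $\deg A$ and $\deg B$; by Weil it is $O(\sqrt p)$ unless $P_{x,y}$ is a constant times a perfect square, in which case it equals a residue symbol times $p-O(1)$. The square locus in $\mathbb{A}^2_{x,y}$ contains the diagonal $x=y$, which produces the main term $p^{2}$ together with an $O(p)$ correction counting zeros of $f_t(x)$, plus at most one further curve, whose contribution is the source of Michel's $p^{3/2}$ term.

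Rather than estimate term by term, I would package this via the Grothendieck--Lefschetz trace formula. Let $\mathcal{F}=R^1\pi_*\Q_\ell$ be the rank-two, weight-one lisse sheaf on the smooth locus $U\subseteq\mathbb{A}^1_T$; since $a_t(p)^2=\mathrm{Tr}(\mathrm{Frob}_t\mid\mathrm{Sym}^2\mathcal{F}_{\bar t})+p$ for $t\in U(\mathbb{F}_p)$, one obtains
\[
p\,A_{\mathcal{E},2}(p)\ =\ p^{2}\,-\,s_p\,p\,-\,\mathrm{Tr}\bigl(\mathrm{Frob}_p\mid H^1_c(U_{\overline{\mathbb{F}}_p},\mathrm{Sym}^2\mathcal{F})\bigr)\,+\,\mathrm{Tr}\bigl(\mathrm{Frob}_p\mid H^2_c(U_{\overline{\mathbb{F}}_p},\mathrm{Sym}^2\mathcal{F})\bigr)\,+\,O(1),
\]
where $s_p$ counts the bad fibres over $\mathbb{F}_p$; here $s_p\ge 1$ on average, since a non-constant $j$-invariant forces bad reduction at a finite place of $T$ (an elementary polynomial-$abc$ bound applied to $4A^3+27B^2$). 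Deligne's weight bounds place every Frobenius eigenvalue on $H^1_c$ at size $\le p^{3/2}$ (recovering Michel), and the weight filtration sorts them: a weight-three part of size $p^{3/2}$, whose eigenvalues are Satake parameters of weight-four cusp-form-type Galois representations (or $\Q_\ell(-1)$ tensored with the $H^1$ of an auxiliary curve) and hence average to $0$ by Sato--Tate; and a weight-two part of size $p$, splitting into honest Tate classes $\Q_\ell(-1)$ (Frobenius eigenvalue exactly $p$, a fixed nonnegative-integer multiple of $p$) and Tate classes twisted by nontrivial Dirichlet characters (eigenvalue $p\zeta$ with $\zeta$ a root of unity, averaging to $0$). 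The same triage applies to $H^2_c$, and to the two-parameter case with $U$ an open surface in $\mathbb{A}^2$. Thus ``the largest lower-order term which does not average to $0$'' is the rigid part of the largest surviving weight, and the conjecture reduces to the claim that \emph{its net coefficient is negative}.

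\textbf{Where the difficulty lies.} When the geometric monodromy of $\mathcal{F}$ is $\mathrm{SL}_2$---the generic case, forced by any fibre of multiplicative reduction---$\mathrm{Sym}^2\mathcal{F}$ is geometrically irreducible and nontrivial, so $H^0_c$ and $H^2_c$ vanish; the only rigid terms below $p^{2}$ are then $-s_p\,p$ (average $\le -1$) and $-\dim_{\Q_\ell}\bigl(\text{Tate part of }\mathrm{gr}^W_2 H^1_c\bigr)\cdot p\le 0$, so every rigid term that survives is negative and the conjecture follows---modulo the equidistribution (Sato--Tate, Chebotarev) needed for the particular Galois representations cut out by $H^1_c(U,\mathrm{Sym}^2\mathcal{F})$, which is available in many but not all cases. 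The genuinely hard part is the opposite regime: families with smaller geometric monodromy (everywhere-additive reduction over $\mathbb{A}^1$, a forced rational $2$-torsion point, potential complex multiplication), where $H^2_c$ no longer vanishes and can contribute rigid terms of either sign, $\mathrm{Sym}^2\mathcal{F}$ acquires extra sub-objects, and no monotonicity principle is available. A uniform proof would require either a global identity exhibiting the rigid coefficient as manifestly negative, or a case analysis over the possible configurations of Kodaira fibres and torsion structures; it is precisely this gap that the present paper probes empirically, by computing closed-form second moments in families of various rank and checking the sign directly.
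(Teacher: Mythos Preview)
The statement you are addressing is a \emph{conjecture}, not a theorem; the paper offers no proof and does not claim one. Its contribution is to verify the sign in a list of one- and two-parameter families where the Legendre sums collapse to closed form, and to gather numerical evidence in families where they do not. So there is no ``paper's proof'' to compare against, and your write-up should be read as a proposed strategy rather than a competing argument.

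As a strategy your cohomological outline is coherent and is in the spirit of Michel's original approach and of the Kazalicki--Naskrecki work the paper cites: rewrite $a_t(p)^2=\mathrm{Tr}(\mathrm{Frob}_t\mid\mathrm{Sym}^2\mathcal{F})+p$, apply Grothendieck--Lefschetz on the open base $U$, and sort eigenvalues by weight. The identification of the ``rigid'' size-$p$ contribution with $-s_p\,p$ (from $|U(\mathbb{F}_p)|=p-s_p$) together with the Tate part of $\mathrm{gr}^W_2 H^1_c$ is the right bookkeeping, and in the big-monodromy case your vanishing of $H^0_c$ and $H^2_c$ is correct. Two points deserve more care. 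First, the weight-$2$ graded piece of $H^1_c(U,\mathrm{Sym}^2\mathcal{F})$ is governed by local monodromy at the punctures, and you should say explicitly why every Frobenius eigenvalue there is $p$ times a root of unity (so that the non-Tate part really averages to zero) rather than a more general weight-$2$ Weil number. Second, and more importantly, your conclusion in the $\mathrm{SL}_2$-monodromy case---that every rigid lower-order term is nonpositive---sits in tension with this paper's own findings: the numerical tables report a \emph{positive} second-moment bias for all rank-$4$ and rank-$6$ families tested, and those families are not obviously outside the big-monodromy regime. Either your sign analysis is missing a positive rigid contribution that can arise in such families, or the paper's data is being swamped by slow-to-average $p^{3/2}$ fluctuations over only $1000$ primes. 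You should confront this discrepancy directly; at present your sketch would, if completed, \emph{disprove} the positive-bias suggestion for high-rank families, and that is a claim worth flagging rather than leaving implicit.
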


If the Bias conjecture holds, then when we estimate the rank of a family, there is always an extra term that slightly increases the upper bound for the average rank. This amount decreases as $\log R$ grows, and thus in the limit plays no role; however, it does lead to a small but noticeable contribution for small and modest sized conductors.

\subsection{Our results}

We report on our results. Much is known about the first moment of the Dirichlet coefficients of elliptic curves. Work of Nagao \cite{Na1, Na2} and Rosen and Silverman \cite{RoSi} shows that the first moment in families is related to the rank of the family over $\mathbb{Q}(T)$; specifically, a small negative bias results in rank. This was used by Arms, Lozano-Robledo and Miller \cite{ALM} to construct one-parameter families of elliptic curves with moderate rank, and later generalized to elliptic curves over number fields \cite{MMRSY} and hyper-elliptic curves \cite{HKLL-RM}.

It is thus natural to ask if there is a bias in the second moments, and if so what are the consequences. We have already seen that a negative bias here is related to some of the observed excess rank and repulsion of zeros of elliptic curve $L$-functions near the central point for finite conductors.

We start with a result from Michel \cite{Mic} on the main term of the second moments, and the size of the fluctuations, in one-parameter families.

\begin{theorem}
For a one-parameter family $\mathcal{E}: y^2=x^3+A(T)x+B(T)$ with non-constant $j(T)$-invariant $j(T)=1728\frac{4A(T)^3}{4A(T)^3+27B(T)^2}$, the second moment of the Dirichlet coefficients equals
\begin{eqnarray}
    pA_{2,\mathcal{E}}(p) \ = \ p^2+O(p^{3/2}),
\end{eqnarray} with the lower order terms of size $p^{3/2}$, $p$, $p^{1/2}$ and $1$ having important cohomological interpretations.
\end{theorem}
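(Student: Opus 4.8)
The plan is to express the second moment as a character sum over the base line, recognize it (after opening the square) as a sum of Frobenius traces on an $\ell$-adic sheaf attached to the family, split off the contribution of the determinant of that sheaf — which produces the main term $p^2$ — and bound the remainder by Deligne's purity theorem, the crucial point being that the non-constant $j$-invariant hypothesis forces the geometric monodromy to be large enough to kill the top compactly-supported cohomology.

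First I would apply the Jacobi-sum formula \eqref{eq:jacobisumec}. With $f_t(x) := x^3+A(t)x+B(t)$,
\[
pA_{2,\mathcal{E}}(p) \ = \ \sum_{t \bmod p}\Bigg(\sum_{x \bmod p}\left(\frac{f_t(x)}{p}\right)\Bigg)^2 \ = \ \sum_{x,y \bmod p}\ \sum_{t \bmod p}\left(\frac{f_t(x)\,f_t(y)}{p}\right).
\]
The diagonal $x=y$ contributes $\sum_{x}\sum_{t}\left(\frac{f_t(x)^2}{p}\right) = \sum_{x}\big(p - \#\{t: f_t(x)\equiv 0\bmod p\}\big)$; since $t\mapsto f_t(x)$ is a nonzero polynomial of degree $O_{A,B}(1)$ for all but $O_{A,B}(1)$ values of $x$, this is $p^2+O(p)$, the claimed main term. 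Applying Weil's character-sum bound $|\sum_t(\tfrac{g(t)}{p})|\ll_{\deg g}\sqrt p$ to each off-diagonal inner sum only gives $O(p^{5/2})$, which is far too weak; the genuine saving must come from cancellation in the $(x,y)$ variables, and that is precisely what cohomology detects. So the real argument is sheaf-theoretic.

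Let $U\subseteq\mathbb{A}^1$ be the open set where $4A(t)^3+27B(t)^2\neq 0$. Its complement consists of $O_{A,B}(1)$ points $t$, and at each such $t$ the cubic $f_t$ has a repeated root, whence a direct evaluation of $\sum_x(\tfrac{f_t(x)}{p})$ gives $|a_t(p)|=O(1)$; so the bad fibres contribute $O(1)$ in total and may be discarded. On $U$ form $\mathcal{F}=R^1\pi_*\mathbb{Q}_\ell$ for the family $\mathcal{E}\to U$: a lisse sheaf of rank $2$, pure of weight $1$, with $a_t(p)=-\mathrm{tr}(\mathrm{Frob}_t\mid\mathcal{F})$ and $\wedge^2\mathcal{F}\cong\mathbb{Q}_\ell(-1)$ (the Weil pairing). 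Then $a_t(p)^2=\mathrm{tr}(\mathrm{Frob}_t\mid\mathrm{Sym}^2\mathcal{F})+\mathrm{tr}(\mathrm{Frob}_t\mid\wedge^2\mathcal{F})$ and the second trace equals $p$ for every $t\in U(\mathbb{F}_p)$, so
\[
pA_{2,\mathcal{E}}(p) \ = \ p\,|U(\mathbb{F}_p)| \ + \ \sum_{t\in U(\mathbb{F}_p)}\mathrm{tr}\big(\mathrm{Frob}_t\mid\mathrm{Sym}^2\mathcal{F}\big) \ + \ O(1) \ = \ p^2 + O(p) + \sum_{t\in U(\mathbb{F}_p)}\mathrm{tr}\big(\mathrm{Frob}_t\mid\mathrm{Sym}^2\mathcal{F}\big).
\]
By the Grothendieck--Lefschetz trace formula the last sum is $\sum_{i=0}^{2}(-1)^i\,\mathrm{tr}\big(\mathrm{Frob}_p\mid H^i_c(U_{\overline{\mathbb{F}}_p},\mathrm{Sym}^2\mathcal{F})\big)$. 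Here $H^0_c=0$ since $U$ is affine, and $H^2_c(U_{\overline{\mathbb{F}}_p},\mathrm{Sym}^2\mathcal{F})=(\mathrm{Sym}^2\mathcal{F})_{\pi_1^{\mathrm{geom}}(U)}(-1)=0$: a one-parameter family of elliptic curves with non-constant $j$-invariant has geometric monodromy Zariski-dense in $\mathrm{SL}_2$ (the local monodromy at a place of multiplicative reduction is a nontrivial transvection, and such elements generate), so $\mathrm{Sym}^2$ of the standard $2$-dimensional representation is irreducible and nontrivial and hence has no coinvariants — this is the one place the hypothesis on $j$ is used. Thus only $H^1_c$ survives. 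By Deligne's Weil~II it is mixed of weights $\le 1+2=3$, and by Grothendieck--Ogg--Shafarevich its dimension is bounded in terms of $\deg A$, $\deg B$ and the number of bad fibres, uniformly for $p$ larger than an absolute bound (the sheaf being tamely ramified there). Consequently the remaining sum is $O(p^{3/2})$ and $pA_{2,\mathcal{E}}(p)=p^2+O(p^{3/2})$. The refined statement then drops out of the weight filtration on $H^1_c(U_{\overline{\mathbb{F}}_p},\mathrm{Sym}^2\mathcal{F})$: its graded pieces of weights $3,2,1,0$ contribute the lower-order terms of sizes $p^{3/2},p,p^{1/2},1$, the top (pure, weight-$3$) part being the middle-extension cohomology and the lower-weight parts the local contributions concentrated at the bad fibres and at $\infty$.

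I expect the main obstacle to be the monodromy input that gives $H^2_c=0$ — one must genuinely know that non-constant $j$ forces the image of $\pi_1^{\mathrm{geom}}(U)$ in $\mathrm{GL}(\mathcal{F})$ to be Zariski-dense in $\mathrm{SL}_2$ — together with the uniformity in $p$ of the cohomological dimensions, which is what keeps the implied constants independent of the prime. The rest — the reduction to $\mathrm{Sym}^2\mathcal{F}$, the identification $\wedge^2\mathcal{F}\cong\mathbb{Q}_\ell(-1)$, the disposal of the $O(1)$ bad fibres, and the conversion of Weil~II into the size estimates — is routine bookkeeping.
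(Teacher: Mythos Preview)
The paper does not supply its own proof of this statement; it is quoted as a result of Michel \cite{Mic} and used as a black box. So there is no in-paper argument to compare against.

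Your sketch is the right one and is, in outline, Michel's original proof: express $a_t(p)^2$ as the Frobenius trace on $\mathcal{F}^{\otimes 2}=\mathrm{Sym}^2\mathcal{F}\oplus\wedge^2\mathcal{F}$ with $\wedge^2\mathcal{F}\cong\mathbb{Q}_\ell(-1)$ supplying the main term $p\,|U(\mathbb{F}_p)|=p^2+O(p)$, apply Grothendieck--Lefschetz to the $\mathrm{Sym}^2$ piece, kill $H^2_c$ by the big-monodromy input (non-constant $j$ forces the geometric monodromy to be Zariski-dense in $\mathrm{SL}_2$, so $\mathrm{Sym}^2$ has no coinvariants), and bound $H^1_c$ by Deligne's Weil~II. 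Your identification of the lower-order terms of sizes $p^{3/2},p,p^{1/2},1$ with the graded pieces of the weight filtration on $H^1_c$ is exactly the ``cohomological interpretation'' the paper alludes to. The two points you flag as needing care --- the density of the monodromy and the uniformity in $p$ of $\dim H^1_c$ --- are indeed the substantive inputs; both are handled in Michel's paper (the first via the existence of a place of multiplicative reduction, guaranteed by a pole of $j$, giving a nontrivial unipotent in the image; the second via Grothendieck--Ogg--Shafarevich with tame ramification for large $p$). I see no gap in your outline.
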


It is possible to have terms of size $p^{3/2}$; see for example \cite{Mi3}.

\begin{theorem}[Birch's Theorem] For the family $\mathcal{E} : y^2 = x^3 + ax + b$ of all elliptic curves, the second moment of the Dirichlet coefficients equals
\begin{eqnarray}
    pA_{2,\mathcal{F}}(p)\ =\ \sum\limits_{a,b \bmod p} a_{\mathcal{E}}(p) \ =  \ p^3-p^2.
\end{eqnarray}
\end{theorem}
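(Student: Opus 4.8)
The plan is to compute the double sum $\sum_{a,b \bmod p} a_{\mathcal{E}}(p)^2$ directly using the Legendre-symbol formula \eqref{eq:jacobisumec} and then interchange the order of summation so that the sum over $a$ and $b$ is innermost. Writing $a_{\mathcal{E}}(p) = -\sum_{x} \js{x^3+ax+b}$, squaring gives $a_{\mathcal{E}}(p)^2 = \sum_{x}\sum_{y} \js{x^3+ax+b}\js{y^3+ay+b}$, so that
\begin{eqnarray}
\sum_{a,b \bmod p} a_{\mathcal{E}}(p)^2 \ = \ \sum_{x \bmod p}\sum_{y \bmod p} \sum_{a \bmod p}\sum_{b \bmod p} \left(\frac{(x^3+ax+b)(y^3+ay+b)}{p}\right).
\end{eqnarray}
The key observation is that for fixed $x \ne y$, the map $(a,b) \mapsto (u,v) := (x^3+ax+b,\ y^3+ay+b)$ is a bijection of $\mathbb{F}_p^2$ to itself (its linear part has determinant $x-y \ne 0$), so the inner double sum becomes $\sum_{u,v} \js{u}\js{v} = \left(\sum_u \js{u}\right)^2 = 0$. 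Thus all off-diagonal terms $x \ne y$ contribute nothing.

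First I would isolate the diagonal $x = y$: there the summand is $\js{(x^3+ax+b)^2} = \js{x^3+ax+b}^2$, which equals $1$ unless $x^3+ax+b \equiv 0 \bmod p$, in which case it is $0$. So the diagonal contributes $\sum_{x}\sum_{a,b}\left(1 - \mathbf{1}_{x^3+ax+b\equiv 0}\right)$. The first piece is $p \cdot p^2 = p^3$. For the second piece, for each fixed $x$ and each fixed $a$, there is exactly one value of $b$ making $x^3+ax+b \equiv 0$, so $\sum_{x}\sum_{a,b}\mathbf{1}_{x^3+ax+b\equiv 0} = p \cdot p \cdot 1 = p^2$. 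Hence $\sum_{a,b}a_{\mathcal{E}}(p)^2 = p^3 - p^2$, and dividing by $p$ (or rather, noting the statement writes $pA_{2,\mathcal{F}}(p) = \sum_{a,b}a_{\mathcal{E}}(p)^2$ with the $\frac1p$ absorbed into the two-parameter normalization over $a,b$) gives the claim. One should double-check the normalization convention in the two-parameter moment definition so the powers of $p$ match; this is bookkeeping rather than substance.

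The main obstacle — really the only subtle point — is justifying the bijection argument cleanly, including making sure we do not need any hypothesis excluding singular specializations. Since we are summing over \emph{all} pairs $(a,b) \bmod p$, including those few with $4a^3 + 27b^2 \equiv 0$ where the "curve" is singular, the formula $a_{\mathcal{E}}(p) = -\sum_x \js{x^3+ax+b}$ is simply being used as the \emph{definition} of the quantity being averaged, so no genuine elliptic-curve structure is needed and no exceptional terms arise; the count is exact. I would remark that this is why Birch's family is so clean: the two-parameter average kills the off-diagonal completely, leaving an exact polynomial in $p$ with no error term and no lower-order cohomological ambiguity, in contrast to the one-parameter case in Michel's theorem.
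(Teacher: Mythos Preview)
Your proof is correct. The paper does not actually supply its own proof of Birch's Theorem; it merely states the result and cites \cite{Bi, Mi1, Mi3, Mic}. So there is no ``paper's approach'' to compare against here, but your argument is the standard one and is essentially what one finds in those references: expand $a_{\mathcal{E}}(p)^2$ as a double Legendre sum over $x,y$, bring the $a,b$ sums inside, and use that for $x\neq y$ the affine change of variables $(a,b)\mapsto(x^3+ax+b,\,y^3+ay+b)$ is invertible over $\mathbb{F}_p$, killing the off-diagonal; the diagonal then gives exactly $p^3-p^2$.

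Two minor remarks. First, the statement in the paper has a typographical slip (the displayed sum is missing the square on $a_{\mathcal{E}}(p)$, and the left-hand normalization $pA_{2,\mathcal{F}}(p)$ is inconsistent with the paper's own two-parameter convention $p^2A_{2,\mathcal{F}}(p)$ used elsewhere); you already flagged the normalization, and you are right that it is bookkeeping. Second, your closing observation that the two free parameters $(a,b)$ are what make the off-diagonal vanish \emph{exactly}, in contrast to Michel's $O(p^{3/2})$ for one-parameter families, is precisely the point: with two parameters the change of variables hits all of $\mathbb{F}_p^2$, so there is no cohomological remainder.
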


See \cite{Bi, Mi1, Mi3, Mic}.


We provide detailed calculations in \S\ref{sec:calculation} and \S\ref{sec:representative families} for some families to illustrate the techniques. See \cite{MWe} and \cite{Wu} for the comprehensive calculations for all of the one-parameter families where we were able to obtain closed form expressions. We then turn to families where we cannot obtain closed form expressions, higher rank families, and higher moments. We provide some details from representative families here, and refer the reader to \cite{MWe} for more. The following summary, taken from the appendix by Miller and Weng in \cite{ACFKKLMMWWYY}, summarizes these results.

\begin{itemize}

\item All the rank $0$ and rank $1$ families studied have data consistent with a negative bias in their second moment sums. However, for higher rank families (rank at least 4) the data suggests that there is instead a positive bias.

\item For the fourth moment, we also believe that the rank $0$ and rank $1$ families have negative biases. We see this in some families; in others we
see the presence of terms of size $p^{5/2}$ whose behavior is consistent with their averaging to zero, but its presence makes it impossible to detect the lower order terms. Interestingly, again for higher rank families (rank at least 4) the data is consistent with a positive bias.

\item The sixth moment results are similar to the fourth moment. The results are consistent with either a negative bias, or a leading term (now of
size $p^{7/2}$) averaging to zero for lower rank families. Our data also suggests that higher rank families have positive biases.

\item For the odd moments, the coefficients of the leading term vary with the primes. Our data suggests that the average value of the main term for
the $(2k+1)$\textsuperscript{st} moment is $-C_{k+1}rp^{k+1}$, where $C_n = \frac1{n+1}\ncr{2n}{n}$ is the $n$\textsuperscript{th} Catalan number.
\end{itemize}

\section{Tools for Calculating Biases}\label{sec:toolscompute}

In this section we explain why the first moment is often related to the rank, and then introduce the linear and quadratic Legendre sums, the Jacobi symbol as well as the Gauss Sum Expansion, which can be used to compute biases in elliptic curves. See more details from \cite{RoSi, BEW, BAU, Mi1}.

\begin{theorem}[Rosen-Silverman] For an elliptic surface (a one-parameter family), if Tate's conjecture holds, the first moment is related to the rank of the family over $\mathbb{Q}(T)$:

\begin{eqnarray}
    \lim_{x\to\infty}\frac{1}{x}\sum\limits_{p\leq x} \frac{A_1,\mathcal{E}(p)\log{p}}{p} = {\rm rank \mathcal{E}(\mathbb{Q}(T))}.
\end{eqnarray}
\end{theorem}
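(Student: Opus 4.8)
The plan is to convert the average of the Dirichlet coefficients into a trace of Frobenius on cohomology and then read off the Mordell--Weil rank from that trace, following the circle of ideas of Nagao and of Rosen--Silverman. First I would attach to the family $y^2 = x^3 + A(T)x + B(T)$ its relatively minimal smooth projective model, the Kodaira--N\'eron elliptic surface $\pi\colon \mathcal{E}\to\mathbb{P}^1$ over $\mathbb{Q}$. Because the base is $\mathbb{P}^1$, the surface has $b_1(\mathcal{E}) = b_3(\mathcal{E}) = 0$, so in the point counts below only $H^0$, $H^2$ and $H^4$ contribute. For all but finitely many primes $p$ the surface has good reduction; fix such a $p$ and work over $\mathbb{F}_p$.

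Next I would count $\#\mathcal{E}(\mathbb{F}_p)$ in two ways. The Grothendieck--Lefschetz trace formula gives $\#\mathcal{E}(\mathbb{F}_p) = 1 + \operatorname{tr}\bigl(\operatorname{Frob}_p \mid H^2\bigr) + p^2$. Fibering over $t\in\mathbb{P}^1(\mathbb{F}_p)$ instead, each smooth fiber contributes $p+1-a_t(p)$ and each of the finitely many singular (Kodaira) fibers over a place $v$ contributes $(p+1) + (m_v^{(p)} - 1)p + O(1)$, where $m_v^{(p)}$ is the number of $\mathbb{F}_p$-rational irreducible components. Equating the two counts yields, with the error uniform in $p$,
\[
\sum_{t \bmod p} a_t(p) \ = \ \Bigl(2 + \sum_v (m_v^{(p)}-1)\Bigr)\,p \ - \ \operatorname{tr}\bigl(\operatorname{Frob}_p \mid H^2\bigr) \ + \ O(1).
\]

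Now I would invoke the hypothesis. All eigenvalues of $\operatorname{Frob}_p$ on $H^2$ have absolute value $p$; by Tate's conjecture the eigenvalue $p$ occurs with multiplicity $\rho(\mathcal{E}/\mathbb{F}_p) = \operatorname{rank}\operatorname{NS}(\mathcal{E}/\mathbb{F}_p)$, and the Shioda--Tate formula over $\mathbb{F}_p$ gives $\rho(\mathcal{E}/\mathbb{F}_p) = 2 + \sum_v(m_v^{(p)}-1) + \operatorname{rank}\mathcal{E}(\mathbb{F}_p(T))$. For all but finitely many $p$ the reduction map identifies $\operatorname{rank}\mathcal{E}(\mathbb{F}_p(T))$ with $\operatorname{rank}\mathcal{E}(\mathbb{Q}(T))$, any surplus classes modulo $p$ contributing Frobenius eigenvalues equal to $p$ times a nontrivial root of unity. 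Substituting, the ``eigenvalue-$p$'' part of the trace cancels the $\bigl(2+\sum_v(m_v^{(p)}-1)\bigr)p$ term up to $-p\cdot\operatorname{rank}\mathcal{E}(\mathbb{Q}(T))$, so that
\[
A_{1,\mathcal{E}}(p) \ = \ -\operatorname{rank}\mathcal{E}(\mathbb{Q}(T)) \ + \ \varepsilon(p) \ + \ O(1/p),
\]
where $\varepsilon(p)$ is the normalized trace of $\operatorname{Frob}_p$ on the complementary ``transcendental'' subspace $V\subset H^2$, which by construction has no Frobenius eigenvalue equal to $p$. With the normalization of the statement it then suffices to show $\varepsilon(p)$ averages to $0$: the weight-normalized Hasse--Weil $L$-function of $V$ is holomorphic and non-vanishing on $\operatorname{Re}(s)=1$ (no pole there precisely because no eigenvalue equals $p$), so a Wiener--Ikehara / Tauberian argument gives $\sum_{p\le X}\varepsilon(p)\,p^{-1}\log p = o(\log X)$, and together with Mertens' theorem for the main term this yields the stated limit.

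I expect the main obstacle to be the step using Tate's conjecture: beyond the conjecture itself (hence the hypothesis), one must control the finitely many exceptional primes where the geometric Picard number jumps or where Frobenius permutes the fibral components nontrivially, and check that these contribute only root-of-unity twists which disappear under the averaging. By comparison the cohomological inputs (Lefschetz, Weil purity) and the concluding Tauberian step are standard, though the edge non-vanishing of $L(V,s)$ should be quoted rather than reproved.
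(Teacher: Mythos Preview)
The paper does not prove this theorem; it is stated as a known result and attributed to Rosen and Silverman \cite{RoSi}, with Tate's conjecture recorded immediately afterward as the governing hypothesis. So there is no ``paper's own proof'' to compare against.

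That said, your sketch is essentially the Rosen--Silverman argument itself: build the Kodaira--N\'eron model, count $\mathbb{F}_p$-points via Lefschetz on one side and fiber-by-fiber on the other, isolate the eigenvalue-$p$ part of $H^2$ via Tate's conjecture, identify its multiplicity with the N\'eron--Severi rank, unpack that with Shioda--Tate into fibral components plus the Mordell--Weil rank, and finish with a Tauberian argument applied to the transcendental piece of $L_2(\mathcal{E},s)$. The paper's formulation of Tate's conjecture explicitly includes the non-vanishing of $L_2(\mathcal{E},s)$ on $\operatorname{Re}(s)=2$, which is precisely the edge input your Wiener--Ikehara step needs, so your reliance on it is appropriate and not an extra assumption. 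The one place to be more careful is the passage from $\operatorname{rank}\mathcal{E}(\mathbb{F}_p(T))$ back to $\operatorname{rank}\mathcal{E}(\mathbb{Q}(T))$: the specialization map on N\'eron--Severi is injective for good $p$, but the cokernel is handled in the original paper by grouping the excess algebraic classes with the transcendental eigenvalues (roots of unity times $p$, as you note) and absorbing them into the $L$-function whose analytic behavior on the critical line is controlled by the hypothesis. As a high-level outline your proposal is sound and matches the literature, but for the purposes of this paper it suffices to cite \cite{RoSi} rather than reproduce their proof.
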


\begin{conjecture} [Tate's Conjecture for Elliptic Surfaces \cite{ST}]
Let $\mathcal{E}/\mathbb{Q}$ be an elliptic surface and $L_2(\mathcal{E},s)$ be the $L$-series attached to ${H^2}_{et}(\mathcal{E}/\mathbb{Q},\mathbb{Q}_{l})$. Then $L_2(\mathcal{E},s)$ has a meromorphic continuation to $\mathcal{C}$ and satisfies
\begin{eqnarray}
    -{\rm ord}_{s=2}L_2(\mathcal{E},s) = {\rm rank} NS(\mathcal{E}/\mathbb{Q}),
\end{eqnarray}
where $NS(\mathcal{E}/\mathbb{Q})$ is the $\mathbb{Q}$-rational part of the Neron-Severi group of $\mathcal{E}$. Further, $L_2(\mathcal{E},s)$ does not vanish on the line $Re(s)=2$.
\end{conjecture}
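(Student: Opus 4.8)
The final statement is Tate's Conjecture for elliptic surfaces, which in full generality remains open; what I can honestly offer is the strategy by which it is \emph{proved in the accessible cases}, together with the precise point where the general case breaks down. The natural framework is the Leray spectral sequence for the fibration $\pi:\mathcal{E}\to C$ (here $C=\mathbb{P}^1$, since we work over $\mathbb{Q}(T)$). First I would use the Galois-equivariant decomposition
\[
H^2_{et}(\mathcal{E}/\overline{\mathbb{Q}},\mathbb{Q}_\ell)\ \cong\ H^0(C,R^2\pi_*\mathbb{Q}_\ell)\oplus H^1(C,R^1\pi_*\mathbb{Q}_\ell)\oplus H^2(C,\mathbb{Q}_\ell).
\]
The two outer summands are Tate twists of $H^*(C)$ and carry the ``obvious'' algebraic classes (a general fiber and the zero section); correspondingly $L_2(\mathcal{E},s)$ factors, with these pieces producing explicit $\zeta$-type factors whose pole at $s=2$ is bookkept by the Shioda--Tate formula
\[
\mathrm{rank}\,NS(\mathcal{E})\ =\ 2+\sum_{v}(m_v-1)+\mathrm{rank}\,\mathcal{E}(\mathbb{Q}(T)),
\]
where $v$ runs over the bad fibers and $m_v$ is the number of irreducible components above $v$.

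Second, I would isolate the central summand $H^1(C,R^1\pi_*\mathbb{Q}_\ell)$, which carries all of the arithmetically interesting information: its local factor at a good prime is assembled from the $a_t(p)$, so this is exactly where the Dirichlet-coefficient data of the rest of the paper enters. Meromorphic continuation of $L_2(\mathcal{E},s)$ is not automatic; one obtains it by establishing automorphy of the compatible system attached to $R^1\pi_*$, that is, by showing the associated Galois representations are modular. For many families this follows from modularity lifting theorems, or from the surface being a Kuga--Sato-type variety, a rational elliptic surface, or an elliptic K3, where stronger results are available. The asserted nonvanishing on the line $\mathrm{Re}(s)=2$ would then be deduced from standard nonvanishing at the edge of the critical strip for the automorphic $L$-function (the analogue of $\zeta(1+it)\neq 0$), via a Rankin--Selberg or Eisenstein-series argument.

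Third, and this is the crux, I would need the pole order of the continued $L_2$ at $s=2$ to equal $\mathrm{rank}\,NS(\mathcal{E})$. By the factorization, the contribution of the outer summands already matches the $2+\sum_v(m_v-1)$ part of Shioda--Tate, so the entire content is that the pole order of the central factor equals $\mathrm{rank}\,\mathcal{E}(\mathbb{Q}(T))$; equivalently, that every Tate class in $H^1(C,R^1\pi_*)(1)$ is \emph{algebraic}, realized by a section lying in the Mordell--Weil group. This algebraicity of Tate classes is precisely the assertion of the Tate conjecture, and it is the main obstacle: it is known unconditionally only for special classes of surfaces (notably elliptic K3 surfaces, via the resolution of the Tate conjecture for K3s, and products-of-curves or Fermat-type surfaces), and is open in general. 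So the honest plan is to reduce, through Leray and Shioda--Tate, to algebraicity of Tate classes on the transcendental part of $H^2$; to supply meromorphic continuation and edge nonvanishing by automorphy in whatever family is at hand; and to flag that the remaining algebraicity step is the deep, currently unavailable input that prevents a proof in full generality—which is exactly why the statement is invoked here as a hypothesis rather than established.
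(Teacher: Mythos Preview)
Your instinct here is exactly right: the statement is labeled in the paper as a \emph{conjecture}, and the paper makes no attempt to prove it. There is therefore no ``paper's own proof'' to compare against. The paper simply states Tate's Conjecture as a standing hypothesis under which the Rosen--Silverman theorem applies, and then remarks (without argument) that the conjecture is known to hold for rational surfaces, giving the numerical criterion $0<\max(3\deg A,2\deg B)<12$ (or the boundary case) so that the reader can verify that the specific families computed later fall under this umbrella.

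Your write-up goes considerably further than the paper does: you sketch the Leray decomposition of $H^2_{et}$, the Shioda--Tate formula, the reduction to algebraicity of Tate classes on the transcendental part, and the role of automorphy for meromorphic continuation and edge nonvanishing. All of this is accurate and is the standard way experts frame the problem, but none of it appears in the paper, which is pitched at a much more elementary level and treats Tate's Conjecture purely as a black-box input. You also correctly flag the precise obstruction in the general case. If anything, your proposal could be read as over-delivering: for the purposes of this paper the only content needed is ``this is a conjecture, known for rational surfaces, and all our one-parameter examples are rational surfaces,'' which is exactly what the paper says and no more.
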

\noindent
Tate's conjecture is known to hold for rational surfaces: An elliptic curve $y^2=x^3+A(T)x+B(T)$ is rational if and only if one of the following is true:
\begin{enumerate}

\item $0 < \max (3\deg A, 2\deg B) < 12$,

\item $3\deg A = 2\deg B = 12 \ \text{and} \ {\rm ord}_{T=0}T^{12}\Delta(T^{-1})=0$.

\end{enumerate}

All of the one-parameter families we compute are rational surfaces; A representative case is done in \S\ref{sec:calculation}. However, for two-parameter families, we cannot use the Rosen-Silverman theorem, and the ranks are conjectural. Checking their ranks is beyond the scope of this paper, but it can be done; see \cite{WAZ} for more details. As our interest is in the biases of the second moments, we do not need to know these ranks for our purposes.

The key to our analysis in the families below are closed form expressions for linear and quadratic Legendre sums.

\begin{lemma}\label{lem:sumlegendrelinquad} Let $a, b, c$ be positive integers and $a \not\equiv 0 \bmod p$. Then
\begin{eqnarray}
    \sum_{x \bmod p}\js{ax+b} \ = \ 0, \ {\rm if} \ p \nmid a,
\end{eqnarray}
\noindent
and
\begin{eqnarray}
  \sum\limits_{x \bmod p}\js{ax^2+bx+c} \ = \
  \begin{cases}
    -\js{a},& \text{{\rm if}} \ p \nmid b^2-4ac\\
    (p-1)\js{a},              & \text{{\rm if}} \ p \mid b^2-4ac.
  \end{cases}
\end{eqnarray}
\end{lemma}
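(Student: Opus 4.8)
The plan is to prove both identities by elementary means, relying only on basic properties of the Legendre symbol and a completing-the-square argument.

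\textbf{Linear sums.} First I would dispose of the linear case. Since $\left(\frac{\cdot}{p}\right)$ is a nontrivial multiplicative character on $\mathbb{F}_p^\times$ (extended by $0$ at $0$), as $x$ runs over all residues mod $p$ the argument $ax+b$ (with $a \not\equiv 0$) runs over every residue mod $p$ exactly once. Hence $\sum_{x \bmod p}\left(\frac{ax+b}{p}\right) = \sum_{y \bmod p}\left(\frac{y}{p}\right)$, and this is $0$ because exactly $(p-1)/2$ nonzero residues are squares and $(p-1)/2$ are not (and the $y=0$ term contributes $0$). I'd note this requires only $p \nmid a$; the hypothesis that $a,b,c$ are positive integers is cosmetic.

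\textbf{Quadratic sums.} For the quadratic sum, the key step is completing the square. Assume first $p$ is odd and $p \nmid 2a$ (the case $p \mid a$ reduces to the linear case, and $p = 2$ can be checked directly or excluded). Write $ax^2 + bx + c = a\bigl(x + b(2a)^{-1}\bigr)^2 + \bigl(c - b^2(4a)^{-1}\bigr)$, so that $4a(ax^2+bx+c) = (2ax+b)^2 - (b^2 - 4ac)$. Using multiplicativity, $\left(\frac{ax^2+bx+c}{p}\right) = \left(\frac{4a}{p}\right)\left(\frac{(2ax+b)^2 - (b^2-4ac)}{p}\right) = \left(\frac{a}{p}\right)\left(\frac{u^2 - D}{p}\right)$ where $u = 2ax+b$ ranges over all of $\mathbb{F}_p$ as $x$ does, and $D = b^2 - 4ac$. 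So the sum equals $\left(\frac{a}{p}\right)\sum_{u \bmod p}\left(\frac{u^2 - D}{p}\right)$, and it remains to evaluate $T(D) := \sum_{u \bmod p}\left(\frac{u^2-D}{p}\right)$.

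\textbf{Evaluating $T(D)$.} I would handle this by counting. The number of solutions $(u,v)$ to $v^2 = u^2 - D$ is $\sum_u \left(1 + \left(\frac{u^2-D}{p}\right)\right) = p + T(D)$. On the other hand, $v^2 = u^2 - D$ is $(u-v)(u+v) = D$; substituting $s = u-v$, $t = u+v$ (a bijection on $\mathbb{F}_p^2$ since $p$ is odd) this becomes $st = D$. If $D \not\equiv 0$, there are exactly $p-1$ solutions (choose $s \in \mathbb{F}_p^\times$, then $t = D/s$), so $p + T(D) = p-1$, giving $T(D) = -1$ and the sum is $-\left(\frac{a}{p}\right)$. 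If $D \equiv 0$, then $st = 0$ has $2p-1$ solutions, so $T(0) = p-1$ and the sum is $(p-1)\left(\frac{a}{p}\right)$. This matches the two cases in the statement, since $D = b^2 - 4ac$.

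\textbf{Main obstacle.} There is no serious obstacle; the only points requiring care are the edge cases — namely $p = 2$ and $p \mid a$ — and making sure the substitutions $u = 2ax+b$ and $(s,t) = (u-v,u+v)$ are genuine bijections of $\mathbb{F}_p$ (resp. $\mathbb{F}_p^2$), which needs $p$ odd and $p \nmid a$. I would state these hypotheses explicitly at the start of the proof. An alternative to the counting argument for $T(D)$ is to invoke the Gauss sum expansion advertised in the surrounding text, but the elementary count above is cleaner and self-contained.
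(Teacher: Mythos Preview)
Your proof is correct. The paper itself does not give a full proof: it sketches only the linear case (by sending $x \mapsto a^{-1}(x-b)$, the same change of variables you use) and defers the quadratic case entirely to an external reference (Appendix~C of \cite{Mi1}). Your argument for the quadratic sum---completing the square to reduce to $T(D)=\sum_u \left(\frac{u^2-D}{p}\right)$ and then evaluating $T(D)$ by counting points on $st=D$ via the bijection $(s,t)=(u-v,u+v)$---is the standard elementary route and is more than the paper supplies in-text. Your handling of the edge cases ($p$ odd, $p\nmid a$) is also more careful than the paper's statement, which tacitly assumes these throughout.
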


The proofs are standard and follow from elementary manipulations of the sums, exploiting changes of variables modulo $p$. For details see Appendix C \cite{Mi1} (available online). For example the first is proved by sending $x$ to $a^{-1}(x-b)$, which yields \be \sum_{x \bmod p}\js{ax+b} \ = \ \sum_{x \bmod p} \js{x}, \ee which is zero as there are as many non-zero squares as non-squares modulo $p$.

In many families we end up with terms such as $\js{-1}$. By Dirichlet's theorem for primes in arithmetic progression, to first order as $N$ tends to infinity there are the same number of primes $p \le N$ congruent to $1 \bmod 4$ as there are congruent to $3 \bmod 4$. Thus, up to lower order terms tending to zero as $N$ goes to infinity, the average is controlled by the following:
\begin{eqnarray}
\js{-1} & \ = \ &  \begin{cases} 1, \ \text{\rm if} \ p \equiv 1 \bmod 4 \\
-1, \ \text{\rm if} \ p \equiv 3 \bmod 4. \\
\end{cases}
\end{eqnarray}
See \cite{Var} for more details.

For some families, an alternative expansion for the Dirichlet coefficients is useful.

\begin{lemma}[Quadratic Formula mod $p$]\label{lem:quadformulamodp}
    For a quadratic $ax^2+bx+c \equiv 0 \mod p$, $a \not \equiv 0$, there are two distinct roots if $b^2-4ac$ equals to a non-zero square, one root if $b^2-4ac \equiv 0$, and zero roots if $b^2-4ac$ is not a square.
\end{lemma}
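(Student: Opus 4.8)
The statement to prove is: for $ax^2 + bx + c \equiv 0 \bmod p$ with $a \not\equiv 0$ and $p$ odd, the number of roots in $\mathbb{F}_p$ is $2$ if $b^2 - 4ac$ is a nonzero square, $1$ if $b^2 - 4ac \equiv 0$, and $0$ if $b^2 - 4ac$ is a nonsquare.

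The plan is to reduce to the pure power case by completing the square, exactly as over $\mathbb{R}$, and then count square roots in $\mathbb{F}_p$. First I would note that since $p$ is odd and $a \not\equiv 0$, the element $2a$ is invertible modulo $p$, so multiplying the congruence through by the unit $4a$ is reversible and preserves the solution set. This gives $4a^2x^2 + 4abx + 4ac \equiv 0$, i.e.\ $(2ax + b)^2 \equiv b^2 - 4ac \bmod p$. Next I would make the substitution $u = 2ax + b$; since $x \mapsto 2ax + b$ is a bijection of $\mathbb{F}_p$ (composition of multiplication by the unit $2a$ and a translation), the original equation has exactly as many solutions $x$ as the equation $u^2 \equiv b^2 - 4ac \bmod p$ has solutions $u$.

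The final step is the elementary count of square roots: in $\mathbb{F}_p$ with $p$ odd, an element $d$ has exactly $1 + \left(\frac{d}{p}\right)$ square roots when $d \not\equiv 0$ — two when $d$ is a nonzero square (if $u_0^2 = d$ then $-u_0$ is the other root, and $u_0 \ne -u_0$ since $p \ne 2$), zero when $d$ is a nonsquare — and exactly one square root, namely $u = 0$, when $d \equiv 0$. Applying this with $d = b^2 - 4ac$ yields the three cases in the statement. I would also remark that this is consistent with Lemma~\ref{lem:sumlegendrelinquad}: the discriminant condition $p \mid b^2 - 4ac$ is precisely the "repeated root" case.

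There is essentially no obstacle here; the only point requiring a word of care is that the manipulations (multiplying by $4a$, the affine change of variable $u = 2ax+b$) are genuinely invertible modulo $p$, which is exactly where the hypotheses $p$ odd and $a \not\equiv 0$ are used — if $p = 2$ the map $u \mapsto -u$ is the identity and the count of square roots collapses, and if $p \mid a$ the equation is not quadratic. So the "hard part," such as it is, is simply flagging that the characteristic-$2$ and degenerate cases are excluded by hypothesis.
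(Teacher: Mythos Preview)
Your proof is correct and is exactly the standard completing-the-square argument one would expect. Note that the paper itself does not supply a proof of this lemma; it is stated as an elementary fact (in the same spirit as Lemma~\ref{lem:sumlegendrelinquad}, whose proofs the paper also defers to standard references), so there is no ``paper's own proof'' to compare against beyond the implicit appeal to the usual argument, which is precisely what you have written out.
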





\section{Representative one-parameter Family}\label{sec:calculation}
\begin{lemma}The first moment of the family $y^2=4x^3+ax^2+bx+c+dt$ is 0.
\end{lemma}

\begin{proof}
For all $p>4d$, send $t$ to $4d^{-1}t$: Thus
\be \sum_{t(p)} \js{dt}=\sum_{t(p)} \js{4t} .\ee
Therefore,
\bea A_{1,\varepsilon(p)}&\ = \ & -\sum_{t(p)} \sum_{x(p)}  \js{4x^3+ax^2+bx+4t+c}\nonumber\\
&\ = \ & -\sum_{x(p)} \sum_{t(p)} \js{4t+4x^3+ax^2+bx+c}.\eea
As $p\not|4$ when $p\ne 2$, the t-sum vanishes by linear sum theorem, and
\be A_{1,\varepsilon(p)} \ = \ 0.  \ee
By the Rosen-Silverman Theorem, this is a rank 0 family. \end{proof}

\begin{lemma} The second moment of the family $y^2=4x^3+ax^2+bx+c+dt$ is
\begin{equation}
\twocase{A_{2,E}(p)\ =\ }{p^2-p-p\cdot \js{-48}-p\cdot \js{a^2-12b}}{if $a^2-12b\ne 0$}{p^2-p+p(p-1)\js{-48}}{otherwise.}
\end{equation}
\end{lemma}

\begin{proof} We have
\bea A_{2,E}(p)&\ = \ & \sum_{t(p)} \sum_{x(p)} \sum_{y(p)} \js{4x^3+ax^2+bx+4t+c} \js{4y^3+ay^2+by+4t+c} \nonumber\\ m(x)&\ = \ &4x^3+ax^2+bx+c \nonumber\\
n(y)&\ = \ &4y^3+ay^2+by+c \nonumber\\
A_{2,E}(p)&\ = \ & \sum_{t(p)} \sum_{x(p)} \sum_{y(p)}  \js{16t^2+4(m+n)t+mn}.
\eea
The discriminant of $16t^2+4(m+n)t+mn$ is
\bea
\Delta_t(x,y)&\ = \ &16(m+n)^2-64mn \nonumber\\
&\ = \ &16(m-n)^2 \nonumber\\
\delta^2&\ = \ &\Delta_t(x,y) \nonumber\\
\delta&\ = \ &4(m-n)\nonumber\\
&\ = \ &4(4x^3+ax^2+bx+c-4y^3-ay^2-by-c)\nonumber\\
&\ = \ &4(x-y)(4x^2+4xy+4y^2+ax+ay+b).
\eea

If $p|\delta$, then $p|x-y$ or $p|4x^2+4xy+4y^2+ax+ay+b$. $x, y$ range from $0$ to $p-1$, so $p|x-y$ exactly $p$ times. By the quadratic formula mod $p$, $4x^2+4xy+4y^2+ax+ay+b\equiv 4y^2+(4x+a)y+4x^2+ax+b\equiv 0 \pmod p$ when $y=\frac{-4x-a\pm \sqrt{\Delta_y}}{8}$  ($\Delta_y$ is the discriminant of the polynomial  $4y^2+(4x+a)y+4x^2+ax+b$ in terms of y):
\bea
\Delta_y&\ =\ &(4x+a)^2-4\cdot 4(4x^2+ax+b)\nonumber\\
&\ = \ &-48x^2-8ax+a^2-16b.
\eea
If $\Delta_y$ is a non-zero square mod $p$, there are two solutions. If $\Delta_y$ is 0 mod $p$, there is one solution. If $\Delta_y$ is not a square mod $p$, there is no solution.
\\
The number of pairs of $x,y$ such that $p|4x^2+4xy+4y^2+ax+ay+b$ is
\be
\sum_{x(p)} 1+\js{-48x^2-8ax+a^2-16b}=p+\sum_{x(p)} \js{-48x^2-8ax+a^2-16b}.
\ee
The discriminant of $-48x^2-8ax+a^2-16b$ is
\bea
\Delta_x&\ = \ &(8a)^2-4\cdot (-48)(a^2-16b)\nonumber\\
&\ = \ &256a^2-3072b\nonumber\\
&\ = \ &256(a^2-12b).
\eea
\ \\

We break into cases, depending on the value of the discriminant.

\noindent \emph{Case 1: $a^2-12b\ne 0$}:
By the Quadratic Legendre Sum Theorem, if $p\not |256(a^2-12b)$,
\be \sum_{x(p)} \js{-48x^2-8ax+a^2-16b}=-\js{-48}. \ee
The number of pairs of $x,y$ such that $p|4x^2+4xy+4y^2+ax+ay+b$ is
\be
p+\sum_{x(p)} \js{-48x^2-8ax+a^2-16b}=p-\js{-48}.
\ee

The cases that we double count $x=y$ and $p|4x^2+4xy+4y^2+ax+ay+b$ is
\be
4x^2+4xy+4y^2+ax+ay+b\equiv 12y^2+2ay+b\equiv0 \pmod p. \label{acdde1}
\ee
The discriminant of $12y^2+2ay+b$ is
\be
\Delta_y=(2a)^2-4\cdot 12b=2^2\cdot (a^2-12b).
\ee

By the quadratic formula mod $p$, the number of solutions is computable, depending on $a^2-12b$.

\ \\
The number of solutions to (\ref{acdde1}) is
\be
1+\js{2^2\cdot (a^2-12b)}=1+\js{a^2-12b}.
\ee
Therefore, the total number of times that $p|(x-y)(4x^2+4xy+4y^2+ax+ay+b)$ is the number of times $p|x-y$ plus the number of times $p|4x^2+4xy+4y^2+ax+ay+b$ minus the cases that we double count.
\ \\
The number of times that $p|(x-y)(4x^2+4xy+4y^2+ax+ay+b)$ is
\be
p+p-\js{-48}-1-\js{a^2-12b}=2p-1-\js{-48}-\js{a^2-12b}.
\ee
The number of times that $p\not |(x-y)(4x^2+4xy+4y^2+x+y-4)$ is
\be
p^2-\left(2p-1-\js{-48}-\js{a^2-12b}\right)=p^2-2p+1+\js{-48}+\js{a^2-12b}.
\ee
By Quadratic Legendre Sum Theorem,
\bea
A_{2,E}(p)&\ = \ &(p-1)\left[2p-1-\js{-48}-\js{a^2-12b}\right]-\left[p^2-2p+1+\js{-48}+\js{a^2-12b}\right]\nonumber\\
&\ = \ &p^2-p-p\cdot \js{-48}-p\cdot \js{a^2-12b}.
\eea

\ \\

\noindent \emph{Case 2: $a^2-12b=0$}: By the Quadratic Legendre Sum Theorem, since $p|0$, we have
\be
\sum_{x(p)} \js{-48x^2-8ax+a^2-16b}=(p-1)\js{-48}.
\ee
The number of pairs of $x,y$ such that $p|4x^2+4xy+4y^2+ax+ay+b$ is
\be
p+\sum_{x(p)} \js{-48x^2-8ax+a^2-16b}=p+(p-1)\js{-48}.
\ee

The cases that we double count $x=y$ and $p|4x^2+4xy+4y^2+ax+ay+b$ is
\be
4x^2+4xy+4y^2+ax+ay+b\equiv 12y^2+2ay+b\equiv0 \pmod p. \label{acdde}
\ee
The discriminant of $12y^2+2ay+b$ is
\be
\Delta_y=(2a)^2-4\cdot 12b=2^2\cdot (a^2-12b).
\ee

By the quadratic formula mod $p$, the number of solutions is computable, depending on $a^2-12b$.

The number of solutions to (\ref{acdde}) is
\be
1+\js{2^2\cdot (a^2-12b)}=1.
\ee
Therefore, the total number of times that $p|(x-y)(4x^2+4xy+4y^2+ax+ay+b)$ is the number of times $p|x-y$ plus the number of times $p|4x^2+4xy+4y^2+ax+ay+b$ minus the cases that we double count.
\ \\
The number of times that $p|(x-y)(4x^2+4xy+4y^2+ax+ay+b)$ is
\be
p+p+(p-1)\js{-48}-1=2p-1+(p-1)\js{-48}.
\ee
The number of times that $p\not |(x-y)(4x^2+4xy+4y^2+x+y-4)$ is
\be
p^2-\left(2p-1+(p-1)\js{-48}\right)=p^2-2p+1-(p-1)\js{-48}.
\ee
    
\bea
A_{2,E}(p)&\ = \ &(p-1)\left[2p-1+(p-1)\js{-48}\right]-\left[p^2-2p+1-(p-1)\js{-48}\right]\nonumber\\
&\ = \ &p^2-p+p(p-1)\js{-48}.
\eea

Thus we have shown \begin{equation}
A_{2,E}(p) =
\Bigg\{ {\  p^2-p-p\cdot \js{-48}-p\cdot \js{a^2-12b} \ \
\mbox{if} \  a^2-12b\ne 0 \atop p^2-p+p(p-1)\js{-48} \ \mbox{otherwise.}}
\end{equation}
\end{proof}

\section{Numerical computations for biases in second moments}

Unfortunately, for most families we cannot obtain a closed form for the first or second moments. We thus report on some numerical investigation of one-parameter families; we explore several different ranks to see if that has any impact. For some families, we are able to conjecture a formula for the second moment by separating primes into different congruence classes, which suggests that there is often a closed-form polynomial expression. We are then able to prove the results mathematically in some cases. The following table summarizes the numerical results for the second moments' expansions of the families we studied; see Figure \ref{fig:dimensionlinesquareD}.

\begin{figure}[ht]
\begin{center}
\scalebox{1.2}{\includegraphics{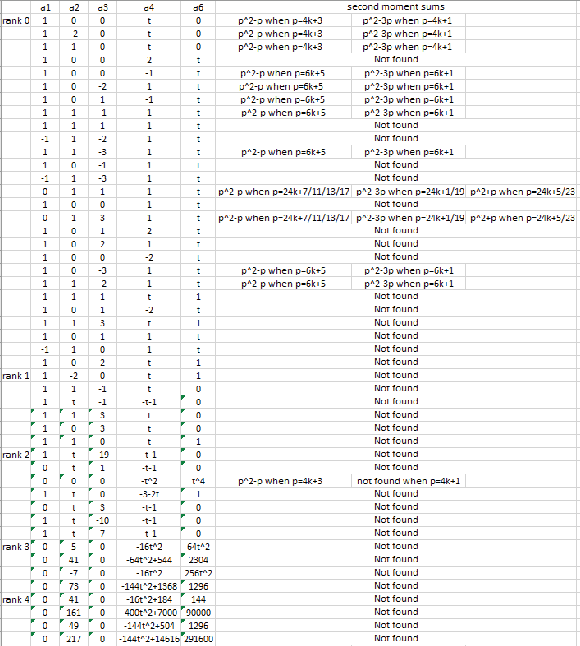}}
\caption{\label{fig:dimensionlinesquareD} Systematic investigation for second moments sums.}
\end{center}
\end{figure}
\ \\

We summarize the first and second moments for some families where we are able to prove closed form expressions for the first two moments. The arguments are representative of the ones needed for all the families. The proofs are similar to the ones in Section \ref{sec:calculation}

\ \\
\noindent \textbf{Family: $y^2=4x^3+ax^2+bx+c+dt$:}
\begin{itemize}
\item First moment: $A_{1,\varepsilon(p)}=0$.
\item Second moment:
\begin{equation}
\twocase{A_{2,\varepsilon(p)} \ = \ }{p^2-p-p\cdot \js{-48}-p\cdot \js{a^2-12b}}{if $a^2-12b\neq 0$}{p^2-p+p(p-1)\js{-48}}{otherwise.}
\end{equation}
\end{itemize}

\ \\
\noindent \textbf{Family: $y^2=4x^3+(4m+1)x^2+n\cdot tx$:}
\begin{itemize}
\item First moment: $A_{1,\varepsilon(p)}=0$.
\item Second moment: \begin{equation} A_{2,\varepsilon(p)}\ =\
\Bigg\{ {\ p^2-3p \ \
\mbox{if} \  p=4k+1 \atop p^2-p \ \mbox{if} \ p=4k+3.}
\end{equation}
\end{itemize}

\ \\
\noindent \textbf{Family: $y^2=x^3-t^2x+t^4$:}
\begin{itemize}
\item First moment: $A_{1,\varepsilon(p)} \ = \ -2p$.
\item Second moment:
\begin{equation}
A_{2,\varepsilon(p)}=p^2-p-p\cdot \js{-3}-p\cdot \js{12}-\sum_{x(p)} \sum_{y(p)} \js{x^3-x} \js{y^3-y}.
\end{equation}
\end{itemize}
\ \\

For families that we are not able to find closed-form expressions, we calculated the average bias of the second moment sums for the first $1000$ primes; see Figure \ref{fig:severalmoments}.

\begin{figure}[ht]
\begin{center}
\includegraphics[width=0.9\linewidth, height=260px]{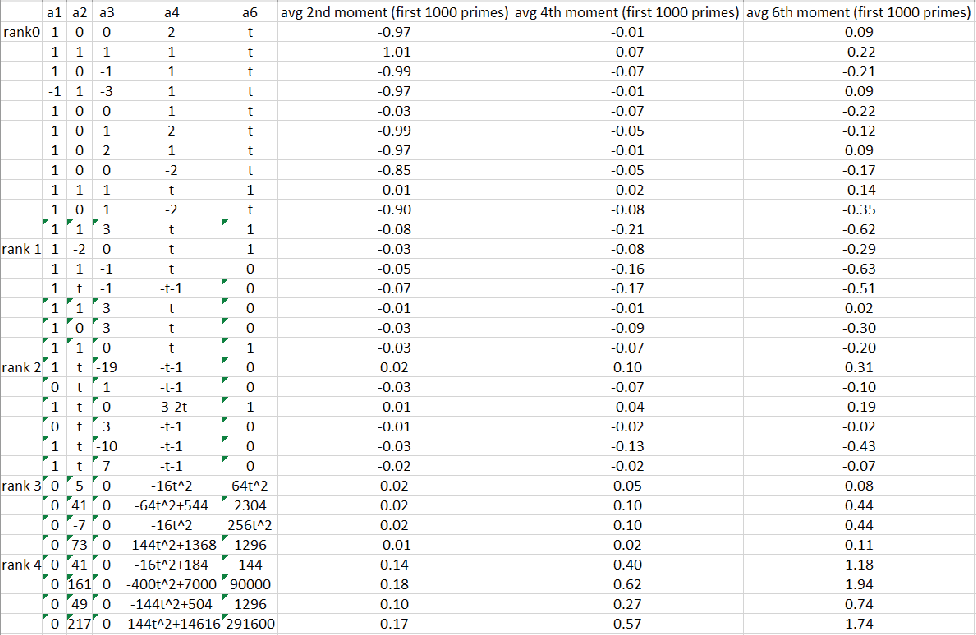}
\caption{Numerical data for the average biases of 2nd, 4th and 6th moments sums.}
\label{fig:severalmoments}
\end{center}
\end{figure}

By Michel's theorem, we know that the main term of the sum is $p^2$, and lower order terms have size $p^{3/2}, p, p^{1/2}$ or $1$. From the data we have, we can tell if it is likely that the second moment has a $p^{3/2}$ term. If the value of $\frac{\text{second\ moment}-p^2}{p}$ converges or stays bounded as the prime grows, then it is likely that the largest lower order term of the second moment sum is $p$, as if there were a $p^{3/2}$ term we would have fluctuations of size $p^{1/2}$.

By subtracting the main term $p^2$ from the sum and then dividing by the largest lower term ($p^{3/2}$ or $p$), we calculated the average bias; see Figure \ref{bias_2nd}.

\begin{figure}[ht]
\begin{center}
\scalebox{1}{\includegraphics{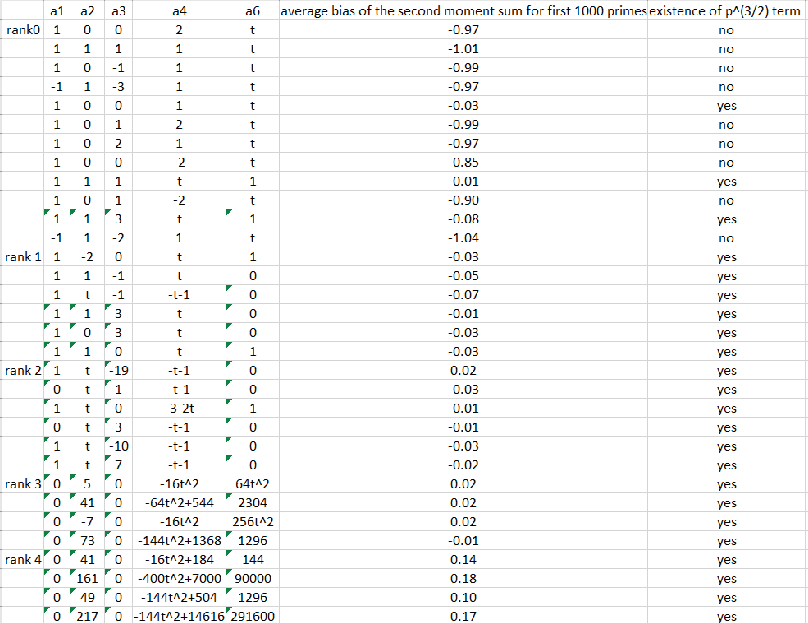}}
 \caption{\label{bias_2nd} Numerical data for the average biases of second moments sums.}
\end{center}
\end{figure}

The data shows that all the families where we do not believe there is a $p^{3/2}$ term clearly have negative biases (around $-1$). When the $p^{3/2}$ exists, the bias unfortunately becomes impossible to see. The reason is that the $p^{3/2}$ term drowns it out; we now have to divide by $p^{3/2}$. If that term averages to zero, then the term of size $p$, once we divide by $p^{3/2}$, is of size $1/p^{1/2}$.

Let's investigate further the consequence of having a term of size $p^{3/2}$. We divide the difference of the observed second moment minus $p^2$ (the expected value) by $p^{3/2}$. We now have signed summands of size 1. By the Philosophy of Square-Root Cancelation, if we sum $N$ such signed terms we expect a sum of size $\sqrt{N}$. As we are computing the average of these second moments, we divide by $N$ and have an expected value of order $1/\sqrt{N}$. In other words, if the $p^{3/2}$ term is present and averages to zero, we expect sums over ranges of primes to be about $1/\sqrt{N}$. If $N=1000$ this means we expect sums on the order of .0316. Looking at the data in Figure \ref{bias_2nd}, what we see is consistent with this analysis. Thus, while we cannot determine if the first lower order term that does not average to zero has a negative bias, we can at least show that the data is consistent with the $p^{3/2}$ term averaging to zero for lower rank families.

The table suggests a lot more. From the data, we can see that all the rank $0$ and rank $1$ families have negative biases. However, all four rank $4$ families have shown positive biases from the first 1000 primes. Thus, we look further to see if it is likely the result of fluctuations, or if perhaps it is evidence against the bias conjecture.

\ \\
We now list the results for a few representative families.

\ \\
We divide the $1000$ primes into $20$ groups of $50$ for further analysis. If the $p^{3/2}$ term averages to zero, we would expect each of these groups to be positive and negative equally likely, and we can compare counts. We now expect each group to be on the order of $1/\sqrt{50} \approx .14$. Thus we shouldn't be surprised if it is a few times .14 (positive or negative); remember we do not know the constant factor in the $p^{3/2}$ term and are just doing estimates.

\ \\
For the rank $2$ family $a_1=1$, $a_2=t$, $a_3=-19$, $a_4=-t-1$, $a_6=0$, $12$ of the $20$ groups of primes have shown positive biases. Figure \ref{1t(-19)(-t-1)0} is a histogram plot of the distribution of the average biases among the 20 groups.

\begin{figure}[ht]
\begin{center}
\scalebox{1}{\includegraphics{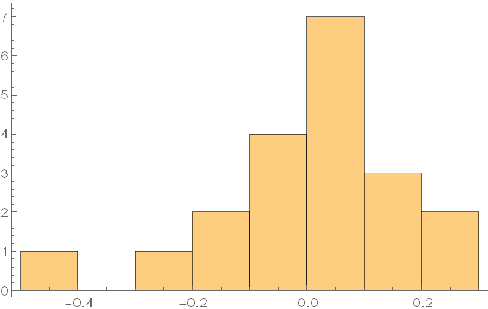}}
\caption{\label{1t(-19)(-t-1)0} Distribution of average biases in the first 1000 primes for family $a_1=1$, $a_2=t$, $a_3=-19$, $a_4=-t-1$, $a_6=0$.}
\end{center}
\end{figure}

\ \\
We now further analyze our data by dividing the 1000 primes into 100 groups of 10 for rank $6$ family $a_1=0$, $a_2=2(16660111104 t) + 811365140824616222208$, $a_3=0$, \newline $a_4=[2(-1603174809600)t-26497490347321493520384](t^2+2t-8916100448256000000+1)$,\newline $a_6=[2(2149908480000)t+343107594345448813363200](t^2+2t-8916100448256000000+1)^2$. Our data suggests that there may be a positive bias in this family; in other words, the bias conjecture may fail if the family rank is sufficiently large. The average bias of second moments sums for the first $1000$ primes is $0.246759$. Figure \ref{811365140824616222208_2nd} is a histogram plot of the distribution of the average biases among the $100$ groups of $10$ primes. Note $78$ of the $100$ groups of primes have positive biases, which suggests that it is likely that the second moment of this family has a positive $p^{3/2}$ term.

\begin{figure}[ht]
\begin{center}
\scalebox{0.9}{\includegraphics{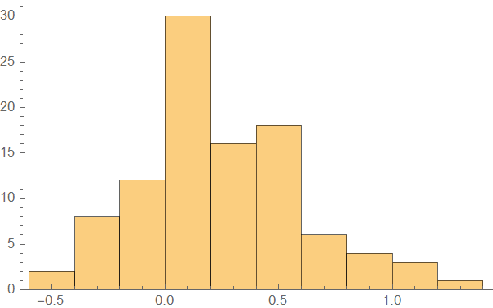}}
\caption{\label{811365140824616222208_2nd} Distribution of average biases in the first 1000 primes for a rank 6 family.}
\end{center}
\end{figure}

From all the data we've collected, the rank $0$ and rank $1$ families have negative biases more frequently, but we are working with small data sets and must be careful in how much weight we assign such results. Our data for the rank $4$ and rank $6$ families has shown that it is likely that higher rank families (${\rm rank}(E(\Q))\ge4$) have positive biases.


\section{Biases in higher moments}
We now explore, for the first time, the higher moments of the Dirichlet coefficients of the elliptic curve $L$-functions to see if biases we found in the first and second moments persist. Unfortunately existing techniques on analyzing the second moment sums do not apply to the higher moments, even if we choose nice families. If we switch orders of the moments' sums and sum over $t$, we are going to get a cubic or higher degrees polynomials. Therefore, we can only try to predict or observe the biases through numerical evidence. We calculated the 4th and 6th moment sums for the first $1000$ primes. From Section \ref{4th_moment_form}, we know that the main term of the fourth moment sum is $2p^3$, and the largest possible lower order terms have size $p^{5/2}$. From Section \ref{6th_moment_form}, we know that the main term of the sixth moment sum is $5p^4$, and the largest possible lower order terms have size $p^{7/2}$. From the data we have gathered, all the 4th moments of these families have $p^{5/2}$ terms, and all the 6th moments have $p^{7/2}$ terms. By subtracting the main term $2p^3$ from the fourth moment sum and then dividing by the size of the largest lower term $p^{5/2}$, we calculated the average bias for the fourth moment of the first $1000$ primes. Similarly, we subtracted $5p^4$ from the sixth moment sum and then divided by $p^{7/2}$ to calculate the average bias for the sixth moment of the first $1000$ primes; See Figure \ref{4_6moments}.

\begin{figure}[ht]
\begin{center}
\scalebox{1}{\includegraphics{2nd4th6thmoments.eps}}
\caption{\label{4_6moments}Numerical data for the average biases of 2nd, 4th and 6th moments sums.}
\end{center}
\end{figure}

\subsection{Biases in fourth moment sums}
From the data, we can see that all the biases for lower rank families in the fourth moment are relatively small (smaller than $0.2$), which indicates that the $p^{5/2}$ term likely averages to $0$. By the Philosophy of Square-Root Cancellation, we expect the order of the size of fluctuation to be around $\sqrt{1000}/1000 \approx 0.03$. Therefore, if the bias is between $-0.2$ and $0.2$, we would expect $p^2$ to be the largest lower order term.

Note that for $30$ out of $31$ families, the bias in fourth moments appear to be similar to the bias in second moments (families that have negative bias in second moments also seem to have negative bias in fourth moments, and vice versa), though much smaller magnitudes likely due to the presence of a $p^{5/2}$ term that is averaging to zero. We now explore a few representative families whose 4-th moment biases have different scales in magnitudes.

For the rank $1$ family $a_1=1$, $a_2=t$, $a_3=-1$, $a_4=-t-1$, $a_6=0$, we analyze our data by dividing the 1000 primes into 100 groups of 10. As shown in Figure \ref{1t-1-t-10_4th_10primes}, $63$ of the $100$ groups of primes have shown negative biases. The probability of having 17 or more negatives than positives (or 17 or more positives than negatives) in 100 tosses of a fair coin (so heads is positive and tails is negative) is about 1.2\%. While unlikely, this is not exceptionally unlikely.

\begin{figure}[ht]
\begin{center}
\scalebox{.95}{\includegraphics{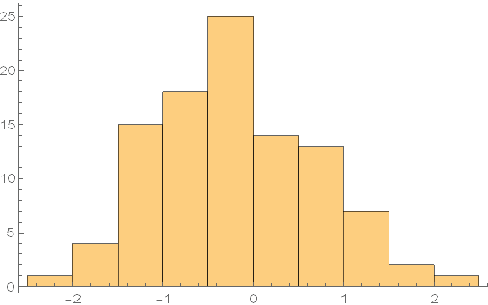}}
\caption{\label{1t-1-t-10_4th_10primes} Distribution of average biases in the first 1000 primes for family $a_1=1$, $a_2=t$, $a_3=-1$, $a_4=-t-1$, $a_6=0$.}
\end{center}
\end{figure}

For the rank $3$ family $a_1=0$, $a_2=5$, $a_3=0$, $a_4=-16t^2$, $a_6=64t^2$, $11$ of the $20$ groups of primes have shown negative biases. Figure \ref{050-16t^264t^2_4th} is a histogram plot of the distribution of the average biases among the 20 groups.

\begin{figure}[ht]
\begin{center}
\scalebox{0.75}{\includegraphics{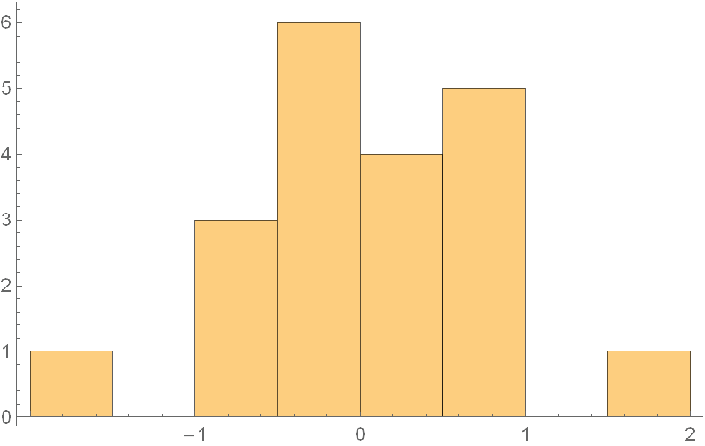}}
\caption{\label{050-16t^264t^2_4th} Distribution of average biases in the first 1000 primes for family $a_1=0$, $a_2=5$, $a_3=0$, $a_4=-16t^2$, $a_6=64t^2$.}
\end{center}
\end{figure}

Despite the fluctuations, all the rank $0$ and rank $1$ families seem to have negative biases more frequently in the first $1000$ primes, which suggests that it is possible that negative bias exists in the fourth moments of all rank $0$ and rank $1$ families.
Similar to the second moment sums, the fourth moment sums of families with larger rank appear to have positive biases for the first 1000 primes, but this might due to the fluctuations of the $p^{5/2}$ term as we are working with small data set.

To further examine the biases in families with larger ranks, we investigate the rank 6 family $a_1=0$, $a_2=2(16660111104 t) + 811365140824616222208$, $a_3=0$, $a_4=[2(-1603174809600)t-26497490347321493520384](t^2+2t-8916100448256000000+1)$, $a_6=[2(2149908480000)t+343107594345448813363200](t^2+2t-8916100448256000000+1)^2$.
Our data suggests that there is a positive bias in this family. The average bias of the fourth moments sums for the first $1000$ primes is $0.753285$. Figure \ref{811365140824616222208_4th} is a histogram plot of the distribution of the average biases among the $100$ groups of $10$ primes. $75$ of the $100$ groups of primes have positive biases, which suggests that it is likely that the fourth moment of this family has a positive $p^{5/2}$ term.

\begin{figure}[ht]
\begin{center}
\scalebox{1}{\includegraphics{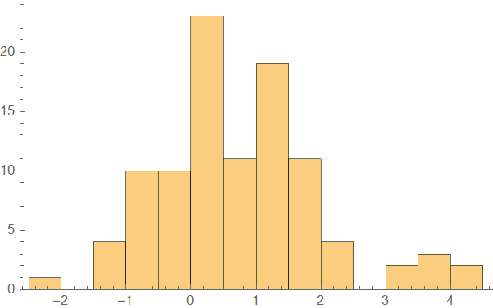}}
\caption{\label{811365140824616222208_4th} Distribution of average biases in the first 1000 primes for a rank 6 family.}
\end{center}
\end{figure}

To sum up, the fourth moments of all lower rank families have first lower order terms that average to $0$ or negative. Similar to the second moment, our data for the rank $4$ and rank $6$ families suggests that higher rank families might have positive biases.

\subsection{Biases in sixth moment sums}
We now explore the 6th moment biases for these families.

For the rank $0$ family $a_1=1$, $a_2=1$, $a_3=1$, $a_4=1$, $a_6=t$, $13$ of the $20$ groups of primes have shown negative biases. Figure \ref{1111t_6th} is a histogram plot of the distribution of the average biases among the 20 groups.

\begin{figure}[ht]
\begin{center}
\scalebox{0.75}{\includegraphics{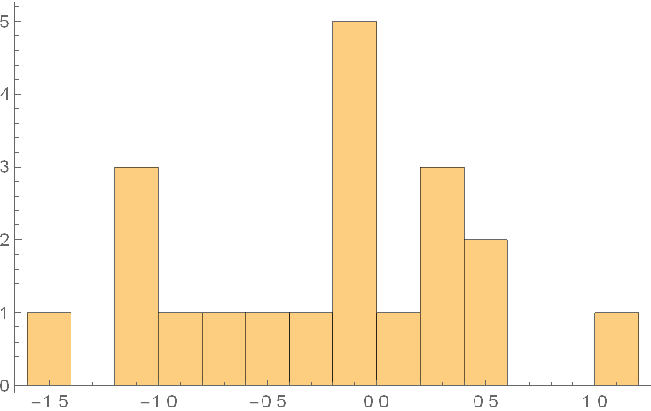}}
\caption{\label{1111t_6th} Distribution of average biases in the first 1000 primes for family $a_1=1$, $a_2=1$, $a_3=1$, $a_4=1$, $a_6=t$.}
\end{center}
\end{figure}

For the rank $1$ family $a_1=1$, $a_2=t$, $a_3=-1$, $a_4=-t-1$, $a_6=0$, $14$ of the $20$ groups of primes have shown negative biases. Figure \ref{1t-1-t-10_6th} is a histogram plot of the distribution of the average biases among the 20 groups.

\begin{figure}[ht]
\begin{center}
\scalebox{0.75}{\includegraphics{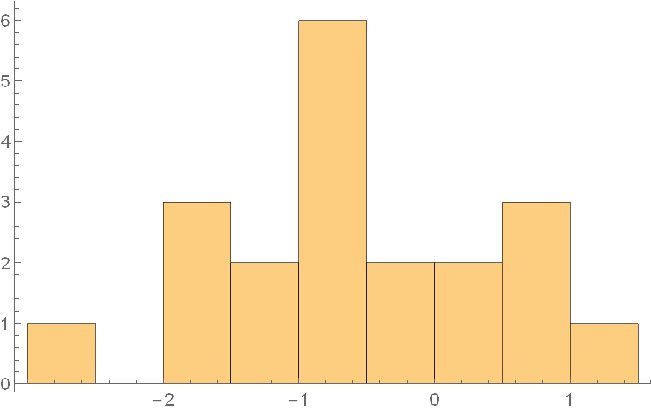}}
\caption{\label{1t-1-t-10_6th} Distribution of average biases in the first 1000 primes for family $a_1=1$, $a_2=t$, $a_3=-1$, $a_4=-t-1$, $a_6=0$.}
\end{center}
\end{figure}

For the rank $3$ family $a_1=0$, $a_2=5$, $a_3=0$, $a_4=-16t^2$, $a_6=64t^2$, $10$ of the $20$ groups of primes have shown negative biases. Figure \ref{050-16t^264t^2B} is a histogram plot of the distribution of the average biases among the 20 groups.

\begin{figure}[ht]
\begin{center}
\scalebox{0.75}{\includegraphics{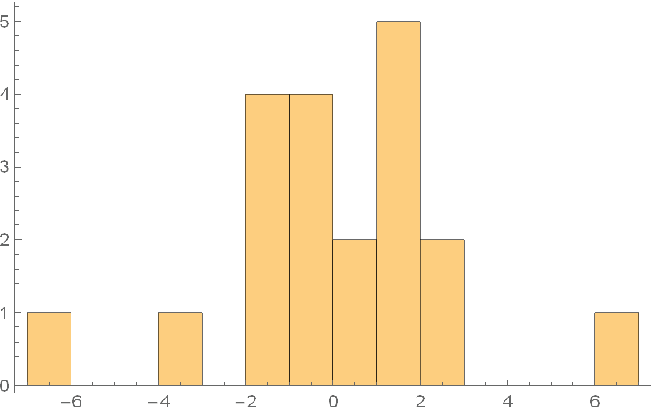}}
\caption{\label{050-16t^264t^2B} Distribution of average biases in the first 1000 primes for family $a_1=0$, $a_2=5$, $a_3=0$, $a_4=-16t^2$, $a_6=64t^2$.}
\end{center}
\end{figure}

As shown from the data, for most families, it is inconclusive whether the 6th moments have negative biases. Our data strongly suggests that the $p^{7/2}$ term averages to $0$ for most families , and the $p^3$ term is drowned out by the fluctuations of $p^{7/2}$ term.

For the rank 6 family $a_1=0$, $a_2=2(16660111104 t) + 811365140824616222208$, $a_3=0$, $a_4=[2(-1603174809600)t-26497490347321493520384](t^2+2t-8916100448256000000+1)$, $a_6=[2(2149908480000)t+343107594345448813363200](t^2+2t-8916100448256000000+1)^2$, the average bias of the sixth moments sums for the first $1000$ primes is $2.26$. Figure \ref{811365140824616222208_6th} is a histogram plot of the distribution of the average biases among the $100$ groups of $10$ primes. $69$ of the $100$ groups of primes have positive biases, which suggests that it is likely that the sixth moment of this family has a positive $p^{7/2}$ term.

\begin{figure}[ht]
\begin{center}
\scalebox{0.9}{\includegraphics{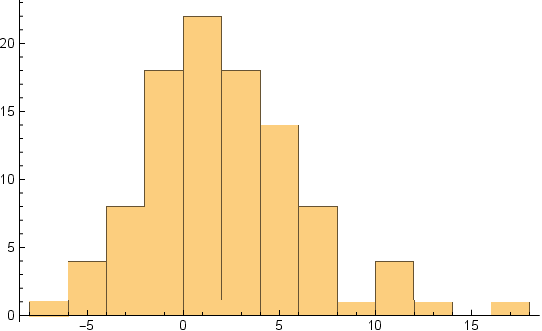}}
\caption{\label{811365140824616222208_6th} Distribution of average biases in the first 1000 primes for a rank 6 family.}
\end{center}
\end{figure}

To sum up, while we are not able to tell if the negative bias exists in the higher even moments, the data is at least consistent with the first lower order term averaging to zero or negative for families with lower ranks. Thus our numerics support a weaker form of the bias conjecture: the first lower order term does not have a positive bias for lower rank families. For families with rank at least $4$, the negative bias conjecture may not hold.

\subsection{Biases in the Third, Fifth, and Seventh Moments}
We now explore the third, fifth, and seventh moments of the Dirichlet coefficients of  elliptic curve $L$-functions. By the Philosophy of Square-Root Cancellation, $p$ times the third moment, $p$ times the fifth moment, and $p$ times the seventh moment  should have size $p^2$, $p^3$, $p^4$ respectively (we are multiplying by $p$ to remove the $1/p$ averaging). For example, the third moment is a sum of $p$ terms, each of size $\sqrt{p}^3$. Thus as these are signed quantities, we expect the size to be on the order of $\sqrt{p} \cdot p^{3/2}$.

We believe that there are bounded functions $c_{3,\mathcal{E}}(p)$, $c_{5,\mathcal{E}}(p)$, and $c_{7,\mathcal{E}}(p)$ such that
\be A_{3,\mathcal{E}}(p) =c_{3,\mathcal{E}}(p) p^2+O(p^{3/2}), \ \ \ A_{5,\mathcal{E}}(p) =c_{5,\mathcal{E}}(p) p^3+O(p^{5/2}), \ \ \ A_{7,\mathcal{E}}(p) =c_{7,\mathcal{E}}(p) p^4+O(p^{7/2});\ee our data supports these conjectures. Unlike the second, fourth, and sixth moments, the coefficient of the leading term can vary with the prime in the third, fifth, and seventh moments. We calculated the average values of $c_{3,\mathcal{E}}(p)$, $c_{5,\mathcal{E}}(p)$, and $c_{7,\mathcal{E}}(p)$ for each elliptic curve family by dividing the size of the main term ($p^2$ for third moment, $p^3$ for fifth moment, and $p^4$ for the seventh moment); see Figure \ref{3_5moments}.

\begin{figure}[H]
\begin{center}
\scalebox{1}{\includegraphics{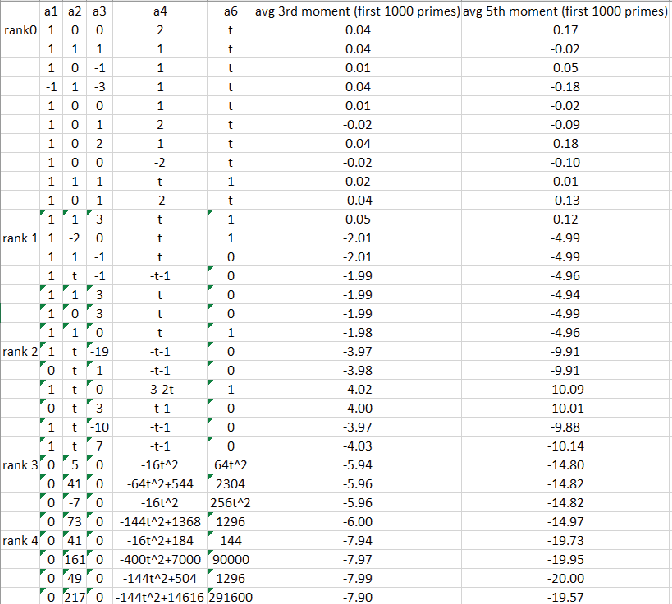}}
\caption{\label{3_5moments} Numerical data for the average constant for the main term of 3rd and 5th moments sums.}
\end{center}
\end{figure}

Our data suggests an interesting relationship between the average constant value for the main term and the rank of elliptic families for these odd moments.

\begin{conjecture} Consider a one-parameter family of elliptic curves of rank $r$.
The average value of the main term of the 3rd moment is $-2rp^2$.
\end{conjecture}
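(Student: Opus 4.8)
The strategy is to peel the ``symmetric power'' structure off $a_t(p)^3$, reduce everything to sums over the family of Frobenius traces on $\operatorname{Sym}^m\mathcal F$, and isolate the one place where the rank enters --- the first moment, where the Rosen--Silverman theorem already does the work. First discard the $O(1)$ values of $t$ with $p\mid\Delta(t)$: by Hasse each contributes $O(p^{3/2})$ to $\sum_t a_t(p)^3$, past the precision $p^2$ we need. For the remaining $t$ write $a_t(p)=\alpha_t+\bar\alpha_t$ with $\alpha_t\bar\alpha_t=p$, put $s_n(t):=\alpha_t^n+\bar\alpha_t^n$, and let $\mathcal F=R^1\pi_*\mathbb Q_\ell$ be the rank-two local system of the family, $\tau_m(t):=\operatorname{tr}\!\big(\operatorname{Frob}_t\mid\operatorname{Sym}^m\mathcal F_t\big)$, $M_m(p):=\sum_{t\bmod p}\tau_m(t)$; so $\tau_1(t)=a_t(p)$ and $M_1(p)=\sum_t a_t(p)=pA_{1,\mathcal E}(p)$. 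Expanding the cube gives $a_t(p)^3=s_3(t)+3p\,a_t(p)$, and the symmetric-power identity $s_n=\tau_n-p\,\tau_{n-2}$ rewrites this as $a_t(p)^3=\tau_3(t)+2p\,\tau_1(t)$; summing over $t$,
\[
pA_{3,\mathcal E}(p)\ =\ \sum_{t\bmod p}a_t(p)^3\ =\ M_3(p)+2p\,M_1(p)+O(p^{3/2}).
\]
(The same decomposition is visible by the elementary method of \S\ref{sec:calculation}: writing $a_t(p)=-\sum_x\big(\tfrac{f_t(x)}p\big)$ and splitting the triple sum into the all-equal, exactly-two-equal and all-distinct ranges collapses, via $\big(\tfrac up\big)^3=\big(\tfrac up\big)$ and $\big(\tfrac up\big)^2=1-[u\equiv 0]$, to $\sum_t a_t(p)^3=3p\sum_t a_t(p)-\sum_t a_t(p)(3n_t+2)-\Sigma_3(p)$ with $n_t=\#\{x:f_t(x)\equiv 0\}\le3$ and $\Sigma_3(p)=\sum_{x,y,z\ \mathrm{distinct}}\sum_t\big(\tfrac{f_t(x)f_t(y)f_t(z)}p\big)$; matching the two expressions gives $\Sigma_3(p)=p\,M_1(p)-M_3(p)+O(p^{3/2})$.)

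For $M_1(p)$ I would quote the Rosen--Silverman theorem, which applies because the families in question are rational elliptic surfaces and so satisfy Tate's conjecture: $M_1(p)$ has leading term $-rp$, any residual $O(p)$ oscillation averaging to $0$ over primes --- just as the $\pm2p$ contributions for $p\equiv 1,7\bmod{12}$ cancel in the mean in the rank-$0$ example of \S\ref{sec:calculation}. Hence $2p\,M_1(p)$ contributes $-2rp^2$ to the average of the main term of $pA_{3,\mathcal E}(p)$, and the conjecture is reduced to the single statement that $M_3(p)$, though of size $p^2$, has main term averaging to $0$.

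This last statement is the real content, and the reason the result is still a conjecture: it is the $\operatorname{Sym}^3$ analogue of the cohomological input behind Michel's second-moment theorem $pA_{2,\mathcal E}(p)=p^2+O(p^{3/2})$. The plan is Grothendieck--Lefschetz on the open base $U$ of the family: $M_3(p)=\sum_i(-1)^i\operatorname{tr}\!\big(\operatorname{Frob}_p\mid H^i_c(U_{\overline{\mathbb F}_p},\operatorname{Sym}^3\mathcal F)\big)$. Here $H^0_c=0$ ($U$ affine), and $H^2_c(U,\operatorname{Sym}^3\mathcal F)=(\operatorname{Sym}^3\mathcal F)_{\pi_1(U)}(-1)=0$ because a non-isotrivial family (non-constant $j$) has geometric monodromy whose Zariski closure is $\operatorname{SL}_2$, for which $\operatorname{Sym}^3$ of the standard representation has no coinvariants. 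So $M_3(p)=-\operatorname{tr}\!\big(\operatorname{Frob}_p\mid H^1_c(U,\operatorname{Sym}^3\mathcal F)\big)$, which by Deligne's theorem is mixed of weights $\le 4$; its possible $p^2$-sized main term is exactly the Frobenius trace on the pure weight-$4$ part. What one must establish is that this weight-$4$ part carries no copy of $\mathbb Q_\ell(-2)$ --- no ``exotic'' Tate/algebraic class (the $\operatorname{Sym}^3$ relatives of Heegner and Gross--Schoen cycles) --- so that, being pure of weight $4$ with no trivial constituent, its Frobenius trace oscillates and averages to $0$ by equidistribution. Granting this, the pieces combine to $\operatorname{avg}_p\!\big(pA_{3,\mathcal E}(p)/p^2\big)=0+2(-r)=-2r$, i.e.\ the main term averages to $-2rp^2$; and $2=C_2$, consistent with the general prediction, since the identical argument applied to $a_t(p)^{2k+1}=\sum_{i=0}^{k}\binom{2k+1}{k-i}p^{k-i}s_{2i+1}(t)$ --- in which, after averaging, only the $i=0$ term (carrying $M_1$, hence $-rp$) and the $i=1$ term (carrying $M_3-pM_1$, hence $+rp^2$) survive --- yields $-\big(\binom{2k+1}{k}-\binom{2k+1}{k-1}\big)rp^{k+1}=-C_{k+1}rp^{k+1}$.

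The main obstacle is precisely this: ruling out an ``exotic'' weight-$4$ Tate class in $H^1_c(U,\operatorname{Sym}^3\mathcal F)$, equivalently a non-oscillating $p^2$-term in $\Sigma_3(p)$. It can genuinely fail for special, highly symmetric families, which fits the numerical hint that high-rank (hence often arithmetically special) families behave differently, so in full generality it is out of reach. A concrete first step is to verify it directly for the closed-form families of Table~\ref{ref:tablemichelleoneparam}: there $\Sigma_3(p)$ reduces to cubic Legendre sums which, while beyond Lemma~\ref{lem:sumlegendrelinquad} in general, are tractable for those particular polynomials, and carrying this out would both make the conjecture a theorem for those families and strengthen the overall evidence.
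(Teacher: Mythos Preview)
The paper does not prove this statement: it is recorded purely as a conjecture, supported only by the numerical data in Figure~\ref{3_5moments}. The paper's entire discussion of a possible argument is the single remark that one could ``try to analyze the third, fifth and seventh moments the same way as we did the fourth and sixth'' and that ``we would obtain expansions that do have terms related to the first moment (and hence by the Rosen--Silverman theorem the rank of the group of rational solutions); unfortunately there are other terms that arise now, due to the odd degree, that are not present in the even moments and which we cannot control as easily. We thus leave a further study of these odd moments as a future project.''

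Your write-up goes substantially further than this. You make the vague ``expansion with terms related to the first moment'' completely explicit via the clean identity $a_t(p)^3=\tau_3(t)+2p\,\tau_1(t)$, so that $pA_{3,\mathcal E}(p)=M_3(p)+2p\,M_1(p)+O(p^{3/2})$; you correctly invoke Rosen--Silverman to show the $2p\,M_1(p)$ piece averages to $-2rp^2$; and you isolate the paper's unnamed ``other terms\dots which we cannot control'' as precisely the statement that $M_3(p)=-\operatorname{tr}\!\big(\operatorname{Frob}_p\mid H^1_c(U,\operatorname{Sym}^3\mathcal F)\big)$ has main term averaging to zero, i.e.\ that no weight-$4$ Tate class appears in that cohomology group. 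This cohomological framing is absent from the paper and is the right way to see both why the reduction works for generic families and why it may fail for arithmetically special (e.g.\ high-rank) ones. Your derivation of the general $-C_{k+1}rp^{k+1}$ prediction from the same mechanism is also correct; note only that the final step implicitly assumes \emph{every} $M_{2j+1}(p)$ with $j\ge 1$ averages to zero (not just $M_3$), since the $i$th summand for $i\ge 2$ involves $M_{2i+1}-pM_{2i-1}$ and neither piece touches $M_1$.

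In short: there is no proof in the paper to compare against. Your proposal is not a proof either --- and you are honest about that --- but it is a correct and considerably sharper reduction of the conjecture to a concrete cohomological statement than anything the paper offers.
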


\begin{conjecture} Consider a one-parameter family of elliptic curves of rank $r$.
The average value of the main term of the 5th moment is $-5rp^3$.
\end{conjecture}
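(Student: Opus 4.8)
The plan is to linearize $a_t(p)^5$ over the traces of the symmetric power representations and reduce the average of the fifth–moment sum to the first moment, where the rank enters. At primes of good reduction write $a_t(p) = 2\sqrt{p}\cos\theta_t$. The Chebyshev polynomials $U_k$ of the second kind satisfy the ballot–type identity $(2\cos\theta)^5 = U_5(\cos\theta) + 4\,U_3(\cos\theta) + 5\,U_1(\cos\theta)$, so setting $b_{t,k}(p) := p^{k/2}U_k(\cos\theta_t) = \operatorname{tr}\!\big(\operatorname{Frob}_p \mid \operatorname{Sym}^k H^1(E_t)\big)$, with $b_{t,1}(p) = a_t(p)$, gives
\begin{equation}
a_t(p)^5 \ = \ 5p^2\,a_t(p) + 4p\,b_{t,3}(p) + b_{t,5}(p).
\end{equation}
Summing over $t \bmod p$ yields
\begin{equation}
\sum_{t\bmod p} a_t(p)^5 \ = \ 5p^2\,M_1(p) + 4p\,N_3(p) + N_5(p),
\end{equation}
where $M_1(p) := \sum_t a_t(p) = p\,A_{1,\mathcal{E}}(p)$ and $N_k(p) := \sum_t b_{t,k}(p)$. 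The coefficient $5$ is the Catalan number $C_3$, and the analogous ballot identity shows the coefficient of $a_t(p)$ in $a_t(p)^{2k+1}$ is $C_{k+1}p^k$; this is exactly why the conjectured main term for the $(2k+1)$-st moment takes the uniform shape $-C_{k+1}rp^{k+1}$.

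Next I would argue that, after averaging over primes, only the term $5p^2 M_1(p)$ contributes to the coefficient of $p^3$. Since $5p^2 M_1(p)/p^3 = 5\,A_{1,\mathcal{E}}(p)$, the Rosen--Silverman theorem (unconditional for the rational elliptic surfaces treated here, conditional on Tate's conjecture in general), which says that the average of $-A_{1,\mathcal{E}}(p)$ over primes is the rank $r$ of the family over $\mathbb{Q}(T)$, gives that this term averages to $-5r$. For the remaining two pieces I would invoke the Grothendieck--Lefschetz trace formula for the symmetric power sheaf $\mathcal{F}_k = \operatorname{Sym}^k \mathcal{R}^1$ on $\mathbb{A}^1$, where $\mathcal{R}^1$ is the relative first cohomology sheaf of the family: one gets $N_k(p) = \operatorname{tr}(\operatorname{Frob}_p\mid H^2_c) - \operatorname{tr}(\operatorname{Frob}_p\mid H^1_c)$. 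Because a family with non-constant $j$-invariant has geometric monodromy Zariski-dense in $\mathrm{SL}_2$, the representation $\operatorname{Sym}^k$ is irreducible and nontrivial for $k \ge 1$, so $H^2_c(\mathcal{F}_k) = 0$ and $N_k(p) = -\operatorname{tr}(\operatorname{Frob}_p\mid H^1_c(\mathcal{F}_k))$, a sum of boundedly many (independently of $p$) Frobenius eigenvalues of absolute value at most $p^{(k+1)/2}$. Thus $4p\,N_3(p)$ and $N_5(p)$ are $O(p^3)$ and do feed into the $p^3$-coefficient; the step that remains is to show that the ``top weight'' eigenvalues of $H^1_c(\mathcal{F}_3)$ and $H^1_c(\mathcal{F}_5)$ (those of absolute value exactly $p^2$, respectively $p^3$) contribute quantities that average to zero over the primes. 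Granting this, the average of $\sum_t a_t(p)^5/p^3$ is $-5r + 0 + 0$, so the average of the main term is $-5rp^3$.

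The hard part is precisely that last input: it amounts to the statement that $\operatorname{Sym}^3$ and $\operatorname{Sym}^5$ of the family carry no Tate class of the relevant weight beyond what monodromy forces (namely none, for $k\ge1$), i.e. a case of the Tate conjecture, or of automorphy of the symmetric power $L$-functions, exactly analogous to the input Rosen--Silverman supplies for $\operatorname{Sym}^1$; it is open in general, which is why this is recorded as a conjecture. An unconditional route survives only family by family: whenever the cubic Legendre sum for $\sum_t a_t(p)^5$ collapses, after interchanging the order of summation and summing over $t$ first, to linear and quadratic Legendre sums, Lemma~\ref{lem:sumlegendrelinquad} evaluates it in closed form exactly as in Section~\ref{sec:calculation}; one then extracts the coefficient of $p^3$, checks that it equals $5\,A_{1,\mathcal{E}}(p)$ up to Legendre-symbol terms that average to zero by Dirichlet's theorem, and quotes Rosen--Silverman for the first moment. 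Carrying this out over a spread of families of each rank $r$ would give unconditional confirmation in those cases and is the realistic target.
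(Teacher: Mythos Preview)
The paper does not prove this statement; it is recorded purely as a conjecture, supported only by the numerical data in Figure~\ref{3_5moments}. The paper's entire discussion of the odd moments ends with the remark that one could ``try to analyze the third, fifth and seventh moments the same way as we did the fourth and sixth,'' that the resulting expansion would contain a piece related to the first moment (hence, via Rosen--Silverman, to the rank), but that ``there are other terms that arise now, due to the odd degree, that are not present in the even moments and which we cannot control as easily,'' and it is explicitly left as a future project.

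Your proposal is therefore not a competing proof but rather a fleshing-out of exactly the program the paper gestures at. You make it precise: the Chebyshev/ballot identity $(2\cos\theta)^5 = U_5 + 4U_3 + 5U_1$ gives $a_t(p)^5 = 5p^2 a_t(p) + 4p\,b_{t,3}(p) + b_{t,5}(p)$, the coefficient $5 = C_3$ explains the Catalan pattern in the general conjecture, and Rosen--Silverman handles the $5p^2 a_t(p)$ piece. You then correctly isolate the obstruction: Michel's bound $\sum_t {\rm sym}_k(\theta_t(p)) = O(\sqrt{p})$ only gives $4p\,N_3(p), N_5(p) = O(p^3)$, the same size as the main term, so one needs the top-weight Frobenius eigenvalues on $H^1_c(\operatorname{Sym}^3)$ and $H^1_c(\operatorname{Sym}^5)$ to average to zero over primes --- a Tate-type statement beyond what is currently available in general. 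This is exactly why the paper leaves it as a conjecture, and you have named the missing input more sharply than the paper does. Your proposal is a correct heuristic and a reasonable research plan, but it is not (and does not claim to be) a proof; there is nothing in the paper to compare it against beyond the one-sentence hint quoted above.
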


\begin{figure}[ht]
\begin{center}
\scalebox{1}{\includegraphics{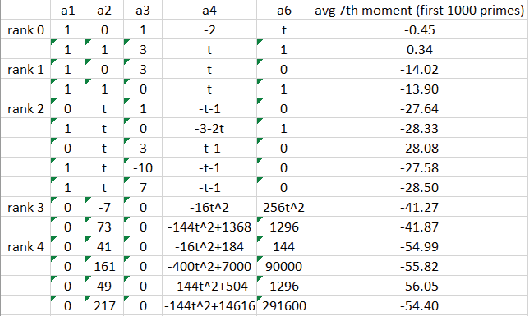}}
\caption{\label{7moments} Numerical data for the average constant for the main term of 7th moments sums.}
\end{center}
\end{figure}

\begin{conjecture} Consider a one-parameter family of elliptic curves of rank $r$.
The average value of the main term of the 7th moment is $-14rp^4$.
\end{conjecture}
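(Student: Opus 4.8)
\medskip\noindent\textbf{Proof approach.} The constants $2,5,14$ in the last three conjectures are the Catalan numbers $C_2,C_3,C_4$, and they enter through the binomial identity $C_{k+1}=\ncr{2k+1}{k}-\ncr{2k+1}{k-1}$, so the plan is to treat all of the odd-moment conjectures at once and then specialize $2k+1=7$. Writing $a_t(p)=\alpha_t+\beta_t$ with $\alpha_t\beta_t=p$ and $\alpha_t=\sqrt{p}\,e^{i\theta_t}$, the starting point is the elementary Chebyshev (symmetric-power) expansion
\be
a_t(p)^{7}\ =\ \sum_{m\in\{1,3,5,7\}} b_m\,p^{(7-m)/2}\,\tau_m(E_t,p),\qquad b_m\ :=\ \ncr{7}{(7-m)/2}-\ncr{7}{(7-m)/2-1}
\ee
(with the convention $\ncr{n}{-1}=0$), where $\tau_m(E_t,p):=\sum_{j=0}^{m}\alpha_t^{\,j}\beta_t^{\,m-j}=p^{m/2}U_m(\cos\theta_t)$ is the trace of Frobenius on $\operatorname{Sym}^m$ of $E_t$ and $U_m$ is the $m$th Chebyshev polynomial of the second kind. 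The key point is that $b_1=\ncr{7}{3}-\ncr{7}{2}=14$ and $\tau_1(E_t,p)=a_t(p)$, so the dominant piece of $a_t(p)^7$ is precisely $14\,p^{3}\,a_t(p)$ (while $b_3=14$, $b_5=6$, $b_7=1$).

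Summing over $t\bmod p$ gives
\be
\sum_{t\bmod p} a_t(p)^{7}\ =\ 14\,p^{3}\sum_{t\bmod p} a_t(p)\ +\ \sum_{m\in\{3,5,7\}} b_m\,p^{(7-m)/2}\sum_{t\bmod p}\tau_m(E_t,p).
\ee
For the first term I would feed in the Rosen--Silverman/Nagao description of the first moment quoted above: the families we treat in closed form are rational elliptic surfaces, for which Tate's conjecture (and hence the Rosen--Silverman formula) is unconditional, so $\tfrac1p\sum_{t}a_t(p)=-r+(\text{fluctuation})$ on average over $p$, the fluctuation being built from the linear and quadratic Legendre sums of Lemma~\ref{lem:sumlegendrelinquad} and hence of size $O(\sqrt p)$ with mean zero in the Rosen--Silverman sense. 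Thus $14\,p^{3}\sum_{t}a_t(p)$ contributes exactly $-14\,r\,p^{4}$ to the main term, which is the claimed value.

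The remaining, and hardest, task is to show the symmetric-power sums $\sum_{t}\tau_m(E_t,p)$ for $m=3,5,7$ do not create a second rank-type constant of size $p^{4}$. Because the family has non-constant $j$-invariant, the geometric monodromy attached to $\operatorname{Sym}^m$ of the family is as large as possible, so Katz's equidistribution results control the trivial-isotypic part of the relevant compactly supported cohomology of the parameter line, and Deligne's Riemann Hypothesis yields $\sum_{t}\tau_m(E_t,p)=(\text{a lower-weight main term of size }p^{m/2})+O(p^{(m+1)/2})$. After multiplying by $b_m p^{(7-m)/2}$ the explicit main term is $O(p^{7/2})=o(p^{4})$, but the square-root-cancellation error is $O(p^{4})$ --- exactly the main-term size --- so one must further show that the coefficient of this $p^{4}$-sized contribution is a \emph{bounded function of $p$ that averages to $0$} over primes (a sum or product of Legendre symbols, or a Sato--Tate-type average), and not a new nonzero constant. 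This is the same phenomenon as ``the $p^{3/2}$ term in the second moment averages to zero'' that appears throughout \S\ref{sec:calculation} and in the even higher moments, and in general it is precisely the evaluation of cubic-and-higher Legendre sums that we have flagged as intractable; I would expect it to be provable family-by-family (as in our closed-form examples) and, in the large, to follow from a sufficiently strong equidistribution statement for $\operatorname{Sym}^m$ of the family. A last technical item is to make ``the average value of the main term'' precise --- averaging over $p\le X$ with the natural $\log p/p$ weights --- and to check the various error terms are uniform enough to survive that average.
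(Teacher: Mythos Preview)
The paper does not prove this statement; it is offered purely as a \emph{conjecture} supported by the numerical tables in Figure~\ref{7moments}, and the paragraph following the four odd-moment conjectures explicitly says ``unfortunately there are other terms that arise now, due to the odd degree, that are not present in the even moments and which we cannot control as easily. We thus leave a further study of these odd moments as a future project.''

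Your symmetric-power/Chebyshev decomposition is exactly the right heuristic and in fact makes precise the sentence just quoted. The identification of the $m=1$ piece as $14\,p^{3}\sum_{t}a_t(p)$, together with Rosen--Silverman giving $\frac{1}{p}\sum_t a_t(p)\approx -r$ on average, correctly recovers the conjectured $-14rp^{4}$. This is more than the paper actually writes down.

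The gap you flag is genuine and is the whole reason the statement is a conjecture rather than a theorem. Using Michel's bound $\sum_{t}\mathrm{sym}_m(\theta_t(p))=O(\sqrt{p})$, each of the $m=3,5,7$ pieces contributes $b_m\,p^{(7-m)/2}\cdot p^{m/2}\cdot O(\sqrt{p})=O(p^{4})$, i.e.\ at exactly the same scale as the claimed main term, so one cannot isolate $-14rp^{4}$ without the additional input that these higher symmetric-power averages have mean zero over primes. That step is not available in general with current tools (it amounts to controlling cubic-and-higher Legendre sums, as you note), and the paper makes no attempt at it. So your write-up is an honest and accurate \emph{heuristic derivation}, matching the paper's intent, but it is not --- and at present cannot be made into --- a proof; you have correctly located the obstruction.
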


\begin{conjecture} Consider a one-parameter family of elliptic curves of rank $r$.
Let $C_n$ be the $n$th Catalan number, $\frac1{n+1}\ncr{2n}{n}$. For $k \in \mathbf{Z}^+$, the average value of the main term of the $2k+1$ th moment is $-C_{k+1}rp^{k+1}$.
\end{conjecture}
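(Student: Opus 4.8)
The plan is to linearize powers of $a_{\mathcal E_t}(p)$ with the Hasse relation, reducing the $(2k+1)$st moment sum to a short weighted sum of ``twisted first moments'' of the family, and then to run a Rosen--Silverman-type analysis on each of those; the Catalan number will appear only at the end, from an elementary binomial identity.

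\emph{Step 1: the algebraic reduction.} Write $a_{\mathcal E_t}(p)=\alpha_t+\overline\alpha_t$ with $\alpha_t\overline\alpha_t=p$ (Hasse), and set $b_m(t):=\alpha_t^m+\overline\alpha_t^m=a(\mathcal E_t/\mathbb{F}_{p^m})$, so $b_0=2$, $b_1=a_{\mathcal E_t}(p)$ and $b_m=a_{\mathcal E_t}(p)\,b_{m-1}-p\,b_{m-2}$. Expanding $(2\cos\theta)^{2k+1}$ in Chebyshev polynomials of the first kind gives the exact identity
\be
a_{\mathcal E_t}(p)^{2k+1}\ =\ \sum_{j=0}^{k}\ncr{2k+1}{j}\,p^{\,j}\,b_{2k+1-2j}(t)
\ee
(for $k=1$ this is $a^3=b_3+3p\,b_1$; for $k=2$, $a^5=b_5+5p\,b_3+10p^2 b_1$). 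Summing over $t\bmod p$,
\be
\sum_{t\bmod p}a_{\mathcal E_t}(p)^{2k+1}\ =\ \sum_{j=0}^{k}\ncr{2k+1}{j}\,p^{\,j}\,M_{2k+1-2j}(p),\qquad M_m(p)\ :=\ \sum_{t\bmod p}a(\mathcal E_t/\mathbb{F}_{p^m}),
\ee
so everything reduces to the twisted first moments $M_m(p)$ for odd $m$.

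\emph{Step 2: the twisted first moments — the crux.} By Hasse over $\mathbb{F}_{p^m}$, $|M_m(p)|\le 2p^{1+m/2}$, and with square-root cancellation in the $t$-sum one expects a leading term of size $p^{(m+1)/2}$ (an integer power, since $m$ is odd). I claim that, averaged over primes in the Rosen--Silverman sense, the coefficient of this leading term depends only on $r:=\operatorname{rank}\mathcal E(\Q(T))$ and equals $-r$ for $m=1$, $r$ for $m=3$, and $0$ for every odd $m\ge 5$; granting this, Step 3 finishes the proof, and conversely the conjecture (for all $k$ at once) is \emph{equivalent} to these three facts, because the linear system in Step 1 is unitriangular. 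For $m=1$ this is exactly the Nagao--Rosen--Silverman theorem quoted above (conditional on Tate's conjecture for the elliptic surface): $M_1(p)=-rp$ on average. For odd $m\ge 3$ I would attack $M_m(p)$ by Grothendieck--Lefschetz: up to the contribution of the finitely many singular fibres, $M_m(p)=\sum_i(-1)^i\operatorname{tr}\big(\mathrm{Frob}_p\mid H^i_c(\mathbb{A}^1_{\overline{\mathbb{F}}_p},\,\mathcal G_m)\big)$, where $\mathcal G_m$ is the virtual combination of $\mathrm{Sym}^{j}R^1\pi_\ast\Q_\ell$ ($j\le m$, same parity as $m$) with trace function $t\mapsto b_m(t)$. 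Since $m$ is odd and the family has non-constant $j$-invariant, the geometric monodromy is Zariski-dense in $\mathrm{SL}_2$ (it is irreducible and contains a nontrivial unipotent coming from a fibre of multiplicative reduction), so each $\mathrm{Sym}^{j}$ ($j\ge 1$) is irreducible and nontrivial; hence $H^0_c=H^2_c=0$ and only the $H^1_c$'s survive, the top one being $H^1_c(\mathbb{A}^1,\mathrm{Sym}^m R^1\pi_\ast\Q_\ell)$, mixed of weight $\le m+1$, whose pure weight-$(m+1)$ part carries the leading term of size $p^{(m+1)/2}$. The remaining task is to identify that Frobenius eigenspace with algebraic-cycle classes on the $m$-th symmetric-power motive of the surface — a higher analogue of the Rosen--Silverman correspondence between size-$p$ eigenvalues on $H^1$ and the Mordell--Weil rank — and to check these reproduce $+rp^2$ when $m=3$ and vanish for $m\ge 5$. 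This is where essentially all the difficulty lies, and I expect only a conditional proof — on the Tate conjecture for the relevant symmetric-power motives, just as Rosen--Silverman is conditional — is attainable in general. For an individual family with $A,B$ of small degree one can instead substitute an elementary argument: expand $a_{\mathcal E_t}(p)^{2k+1}$ as a $(2k+1)$-fold Legendre sum, execute the $t$-sum via $\sum_t\js{P(t)}=p-\#\{t:P(t)=0\}$ when $P\in\mathbb{F}_p[t]$ is a constant times a square and $O(\sqrt p)$ otherwise (Weil), and classify the tuples $(x_1,\dots,x_{2k+1})$ for which $\prod_i\big(x_i^3+A(t)x_i+B(t)\big)$ is a constant times a square in $\mathbb{F}_p[t]$; for linear $A,B$ these are precisely the tuples whose non-constant fibre-linear factors pair off (equal values of a fixed degree-$3$ rational map) with one coordinate pinned to the base point of the pencil, the pairings contribute the Catalan number, and the pinned coordinate contributes a Legendre symbol of average $r$ by Rosen--Silverman.

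\emph{Step 3: assembly.} Substituting the values from Step 2 into the expansion of Step 1, only the $j=k-1$ and $j=k$ terms reach the top order $p^{k+1}$, and the averaged coefficient of $p^{k+1}$ in $\sum_{t\bmod p}a_{\mathcal E_t}(p)^{2k+1}$ becomes
\be
-\,\ncr{2k+1}{k}\,r\ +\ \ncr{2k+1}{k-1}\,r\ =\ -\Big(\ncr{2k+1}{k}-\ncr{2k+1}{k-1}\Big)r\ =\ -\,\frac{2}{k+2}\ncr{2k+1}{k}\,r\ =\ -\,\frac{1}{k+2}\ncr{2k+2}{k+1}\,r\ =\ -\,C_{k+1}\,r,
\ee
using the ballot identity $\ncr{n}{j}-\ncr{n}{j-1}=\frac{n-2j+1}{n-j+1}\ncr{n}{j}$ at $n=2k+1$, $j=k$. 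Thus the main term of the $(2k+1)$st moment is $-C_{k+1}r\,p^{k+1}$, recovering $-2r$, $-5r$, $-14r$ for $k=1,2,3$. In short: after the easy algebraic reduction, the conjecture is equivalent to a statement purely about the twisted first moments $M_m(p)$, whose only hard part is pinning down their secondary main terms for $m\ge 3$ — a Tate-conjecture-flavoured cohomological input generalizing Rosen--Silverman — and the Catalan number contributes nothing beyond the elementary identity above.
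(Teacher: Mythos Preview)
The statement is labelled a \emph{conjecture} in the paper and no proof is offered there; it is supported only by the numerical tables for the third, fifth and seventh moments. Immediately after stating it the authors remark that an odd-moment expansion would ``have terms related to the first moment (and hence by the Rosen--Silverman theorem the rank)'' but that ``there are other terms that arise now, due to the odd degree, \dots\ which we cannot control as easily,'' and they explicitly defer the question to future work. So there is no paper-side proof to compare against.

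Your Steps~1 and~3 are correct and go further than the paper. The identity $a^{2k+1}=\sum_{j=0}^k\binom{2k+1}{j}p^{\,j}b_{2k+1-2j}$ holds, Michel's bound $\sum_t\mathrm{sym}_m(\theta_t)=O(\sqrt p)$ forces $M_m(p)=O(p^{(m+1)/2})$ so that every summand lands at order $p^{k+1}$, and the ballot computation $\binom{2k+1}{k}-\binom{2k+1}{k-1}=C_{k+1}$ is right. Your observation that the conjecture for all $k\ge1$ is \emph{equivalent}, via this unitriangular system, to
\[
\big(c_1,c_3,c_5,c_7,\dots\big)\ =\ \big(-r,\ r,\ 0,\ 0,\ \dots\big),
\qquad c_m\ :=\ \text{averaged coefficient of }p^{(m+1)/2}\text{ in }M_m(p),
\]
is a clean reformulation the paper does not reach; it names precisely the ``other terms'' the authors could not control.

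The gap, which you flag yourself, is Step~2. Rosen--Silverman gives $c_1=-r$ (conditionally). For odd $m\ge3$ your Grothendieck--Lefschetz outline correctly places the top term in the weight-$(m{+}1)$ part of $H^1_c$, but the identification is not established. Writing $b_m=\mathrm{tr}(\mathrm{Sym}^m)-p\,\mathrm{tr}(\mathrm{Sym}^{m-2})$ one finds $c_m=d_{m-2}-d_m$, where $d_j$ is the averaged weight-$(j{+}1)$ coefficient of $\mathrm{tr}\big(\mathrm{Frob}_p\mid H^1_c(\mathbb A^1,\mathrm{Sym}^j R^1\pi_\ast\mathbb Q_\ell)\big)$; thus the conjecture is equivalent to $d_1=r$ (Rosen--Silverman) together with $d_m=0$ for every odd $m\ge3$. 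In particular the $+r$ at $m=3$ should come entirely from the $\mathrm{Sym}^1$ piece, not from new cycles on $\mathrm{Sym}^3$ as your sketch suggests. That vanishing --- no extra Tate classes on the higher symmetric-power motives --- is a genuine Tate-type input beyond what is currently known. What you have, then, is a valid reduction of the conjecture to an equivalent and arguably more structured conjecture, together with a plausible cohomological plan; it is not a proof, though it is more than the paper itself contains.
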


We can try to analyze the third, fifth and seventh moments the same way as we did the fourth and sixth. In doing so, we would obtain expansions that do have terms related to the first moment (and hence by the Rosen-Silverman theorem the rank of the group of rational solutions); unfortunately there are other terms that arise now, due to the odd degree, that are not present in the even moments and which we cannot control as easily. We thus leave a further study of these odd moments as a future project.



\section{Conclusion and Future Work}
Natural future questions are to continue investigating the second moment bias conjecture in more and more families, theoretically if possible, numerically otherwise. Since the bias in the second moments doesn't imply biases in higher moments, we can also explore whether there is a corresponding negative bias conjecture for the higher even moments. As these will involve quartic or higher in $t$ Legendre sums, it is unlikely that we will be able to  compute these in closed form, and thus will have to resort to analyzing data, or a new approach through algebraic geometry and cohomology theory (Michel proved that the lower order terms are related to cohomological quantities associated to the elliptic curve).

Any numerical exploration will unfortunately be quite difficult in general, as there is often a term of size $p^{3/2}$ which we believe averages to zero for some families, but as it is $\sqrt{p}$ larger than the next lower order term, it completely drowns out that term and makes it hard to see the bias.

For the odd moments, our numerical explorations suggest that the bias in the first moment, which is responsible for the rank of the elliptic curve over $\Q(T)$, persists. A natural future project is to try to extend Michel's work to prove our conjectured main term formulas for the odd moments.

Another area we want to focus on in the future is getting to know the two-parameter families better. What are the implications of the negative bias of the two-parameter families? How do they behave differently from one-parameter families or other families and why?

\section{Acknowledgement}
We thank the authors of \cite{ACFKKLMMWWYY} for comments on related problems. We also thank Jiefei Wu for helpful conversations, and allowing us to expand upon her introduction from \cite{Wu} for this paper, and to include some of her results in the first appendix.

\appendix
\section{Representative one-parameter and two-parameter families (By Steven J. Miller and Jiefei Wu)}\label{sec:representative families}
\subsection{One-Parameter Family}

\begin{lemma}
The first moment of the one-parameter family $y^2=x^3-tx^2+(x-1)t^2$ is $0$ on average, and the family's rank is $0$. More explicitly, for primes greater than 3 we have $-p$ times the first moment is $2p$ if $p \equiv 1 \bmod 12$, it is $-2p$ if $p \equiv 7 \bmod 12$, and 0 otherwise, thus \be \twocase{-pA_{1,\mathcal{F}}(p) \ = \ }{2p}{if $p \equiv 1 \bmod 12$}{-2p}{if $p \equiv 7 \bmod 12$} \ee and zero for all other primes greater than 3.
\end{lemma}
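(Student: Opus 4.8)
The plan is to compute $-pA_{1,\mathcal{F}}(p) = -\sum_{t \bmod p} a_{\mathcal{E}_t}(p)$ directly from the Jacobi-sum formula \eqref{eq:jacobisumec}, swapping the order of summation so that the $t$-sum is done first. For the curve $y^2 = x^3 - tx^2 + (x-1)t^2$ we have
\be
-pA_{1,\mathcal{F}}(p) \ = \ \sum_{t \bmod p} \sum_{x \bmod p} \js{x^3 - tx^2 + (x-1)t^2},
\ee
and after interchanging the sums the inner object is $\sum_{t \bmod p} \js{(x-1)t^2 - x^2 t + x^3}$, a \emph{quadratic} in $t$ with leading coefficient $x-1$. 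This is exactly the situation covered by Lemma \ref{lem:sumlegendrelinquad}, so for each fixed $x$ I can read off the value of the $t$-sum in closed form, provided I separate out the degenerate case $x \equiv 1$.

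**Carrying out the cases.**
First, when $x \equiv 1 \bmod p$ the $t$-sum degenerates to a linear Legendre sum $\sum_t \js{-t + 1}$, which vanishes by the first part of Lemma \ref{lem:sumlegendrelinquad}. For $x \not\equiv 1$, the quadratic $ (x-1)t^2 - x^2 t + x^3$ has discriminant $\Delta(x) = x^4 - 4(x-1)x^3 = x^3(x - 4x + 4) = x^3(4 - 3x)$. By Lemma \ref{lem:sumlegendrelinquad} the $t$-sum equals $(p-1)\js{x-1}$ when $p \mid \Delta(x)$ (i.e. $x \equiv 0$ or $x \equiv 4/3$) and $-\js{x-1}$ otherwise. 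So
\be
-pA_{1,\mathcal{F}}(p) \ = \ p\Bigl(\js{-1} + \js{4/3 - 1}\Bigr) - \sum_{x \bmod p,\ x \neq 1} \js{x-1} \ + \ (\text{bounded correction at }x\equiv 0,\tfrac43),
\ee
where I must be careful that $x \equiv 0$ and $x \equiv 4/3$ are genuinely distinct and genuinely not equal to $1$ (true for $p > 3$). Since $\js{4/3-1} = \js{1/3} = \js{3}$ (using $\js{1/3}=\js{3}$) and $\sum_{x} \js{x-1} = 0$ over all $x$, the leftover linear sum over $x \neq 1$ is $-\js{0} = 0$, so the $O(1)$ pieces cancel cleanly and the whole expression collapses to $p\bigl(\js{-1} + \js{3}\bigr)$.

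**From Legendre symbols to congruence conditions.**
It then remains to translate $\js{-1} + \js{3}$ into the stated congruence classes mod $12$. Using $\js{-1} = 1$ iff $p \equiv 1 \bmod 4$ and quadratic reciprocity for $\js{3}$ (which gives $\js{3} = 1$ iff $p \equiv \pm 1 \bmod 12$), one checks the four residue classes $1, 5, 7, 11 \bmod 12$: the sum $\js{-1}+\js{3}$ equals $2$ exactly when $p \equiv 1 \bmod 12$, equals $-2$ exactly when $p \equiv 7 \bmod 12$, and equals $0$ in the classes $5$ and $11$. Multiplying by $p$ gives the claimed formula. Finally, $A_{1,\mathcal{F}}(p)$ thus takes values in $\{2p/p, 0, -2p/p\}$ weighted by densities $\tfrac14,\tfrac12,\tfrac14$ among primes (Dirichlet), so it averages to $0$; combined with the Rosen-Silverman theorem (Theorem, \S\ref{sec:toolscompute}), whose hypotheses hold here because the surface is rational, this forces the rank over $\Q(T)$ to be $0$.

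**Main obstacle.**
The genuinely delicate step is the bookkeeping of the degenerate/boundary values of $x$ --- namely $x \equiv 1$ (where the $t$-quadratic collapses), and $x \equiv 0$ and $x \equiv 4/3$ (where the discriminant vanishes and the $t$-sum jumps from $-\js{x-1}$ to $(p-1)\js{x-1}$). One must verify these three points are distinct and lie outside each other's special sets for $p > 3$, track the exact $O(1)$ contributions they produce, and confirm that they recombine with the ``missing'' term of the complete sum $\sum_x \js{x-1}$ so that no stray constant survives; a sign error here would spoil the clean answer $p(\js{-1}+\js{3})$. Everything else is a routine application of Lemma \ref{lem:sumlegendrelinquad} and quadratic reciprocity.
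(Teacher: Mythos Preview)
Your proof is correct, but it proceeds along a genuinely different route from the paper's.

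The paper first applies the substitution $x\mapsto tx$ (valid for $t\not\equiv 0$), which converts the summand into $\js{t^2}\js{t(x^3-x^2+x)-1}$. After this the $t$-sum is \emph{linear} in $t$, so it vanishes unless $x(x^2-x+1)\equiv 0$; the nonzero contributions therefore come from the roots of $x^2-x+1$, which exist precisely when $-3$ is a square mod $p$. This yields $-pA_{1,\mathcal F}(p)=2p\js{-1}$ for $p\equiv 1\bmod 3$ and $0$ for $p\equiv 2\bmod 3$, from which the mod~$12$ statement follows.

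You instead keep the $t$-sum as the \emph{quadratic} $(x-1)t^2-x^2t+x^3$ and invoke the quadratic case of Lemma~\ref{lem:sumlegendrelinquad} directly, with discriminant $x^3(4-3x)$. The special points are then $x\equiv 0$ and $x\equiv 4/3$ rather than the roots of $x^2-x+1$, and the answer emerges as $p\bigl(\js{-1}+\js{3}\bigr)$, which one checks is identical to the paper's expression via $\js{-3}=1\iff p\equiv 1\bmod 3$. Your approach avoids the substitution trick and is slightly more mechanical; the paper's approach has the advantage that the same substitution $x\mapsto tx$ is reused verbatim in the second-moment computation that follows, so its first-moment argument dovetails with the harder calculation.
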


\begin{proof} For $p > 3$ we compute the first moment:
\begin{eqnarray}
    -pA_{1,\mathcal{F}}(p) & \ = \ & -\sum_{t(p)}a_{t(p)} = \sum_{t(p)}\sum_{x(p)}\js{x^3-tx^2+(x-1)t^2} \nonumber\\
    & = & \sum_{t(p)}\sum_{x(p)}\js{x^3-tx^2+xt^2-t^2} \nonumber\\
    & = & \sum_{t=1}^{p-1}\sum_{x(p)}\js{t^3x^3-t^3x^2+t^3x-t^2}\nonumber\\
    & = & \sum_{x(p)}\sum_{t=1}^{p-1}\js{t^2}\js{tx^3-tx^2+tx-1}\nonumber\\
    & = & \sum_{x(p)}\sum_{t=0}^{p-1}\js{t(x^3-x^2+x)-1}-\sum_{x(p)}\js{-1}\nonumber\\
    & \ = \ & \sum_{t(p)}\sum_{x=0}\js{-1}+\sum_{t(p)}\sum_{x(p);x \not= 0}\js{t(x^3-x^2+x)-1}-\sum_{x(p)}\js{-1} \nonumber\\
    & \ = \ & \sum_{t(p)}\sum_{x(p);x \not= 0}\js{t(x^3-x^2+x)-1},\end{eqnarray}
where the first and third sums cancel in the second to last line above. For the remaining sum, if $x^3-x^2+x = x(x^2-x+1)$ is not zero modulo $p$, we can send $t$ to $(x^3-x^2+x)^{-1}t$ and thus the sum is zero. Thus the only possible contribution from this term is when $x^2 - x + 1 \equiv 0 \bmod p$ (we have already removed the $x=0$ term, so we can drop that factor). By the Quadratic Formula mod 2 (Lemma \ref{lem:quadformulamodp}) the roots are $(1 \pm \sqrt{-3})/2$. A simple calculation using quadratic reciprocity shows that if $p \equiv 1 \bmod 3$ then $-3$ is a square, and thus the two roots exist, while if $p \equiv 2 \bmod 3$ it is not a square and there are no roots. It is worth to note that all three roots are distinct when $p$ exceeds 3. In particular, $1 \pm \sqrt{3}$ cannot be zero, and these two roots are not equal to each other as that would mean $\sqrt{3}$ is zero.

So, if $p \equiv 2 \bmod 3$ the sum is zero, while if $p \equiv 1 \bmod 3$ the sum is $2p \js{-1}$. If $p \equiv 1 \bmod 4$ then $\js{-1} = 1$ while if $p \equiv 3 \bmod 4$ then $\js{-1} = -1$. By Dirichlet's Theorem for Primes in Arithmetic Progressions, we see that to first order, half the time the sum is not zero it is $2p$ while the other half of the time it is $-2p$. Thus on average it is zero, and since it is a rational surface, by the Rosen-Silverman theorem the family's rank is $0$ (we have a positive contribution when $p \equiv 1 \bmod 12$ balanced by an equal negative contribution when $p \equiv 7 \bmod 12$; the other primes greater than three contribute zero).

\end{proof}

\begin{lemma}
The second moment of the one-parameter family $y^2=x^3-tx^2+(x-1)t^2$ has a negative bias, which supports the bias conjecture, explicitly
$ p^2-2p\delta_{2,3}(p)-2p\js{-3}-p\js{-2}-\left[\sum_{x(p)}\js{x^3-x^2+x}\right]^2$, where $\delta_{a,b}(p)$ is $1$ if $p$ is $a \bmod b$ and $0$ otherwise.
\end{lemma}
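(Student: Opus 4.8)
The plan is to compute $p^2 A_{2,\mathcal F}(p) = \sum_{t(p)} a_{\mathcal E_t}(p)^2 = \sum_{t(p)} \left(\sum_{x(p)} \js{x^3-tx^2+(x-1)t^2}\right)^2$, expand the square into a double sum over $x$ and $y$, and interchange the order of summation to do the $t$-sum first. Exactly as in the first-moment calculation, I would first isolate the $t=0$ contribution (where the inner sum is $\sum_x \js{x^3} = \sum_x\js{x}=0$), then for $t \ne 0$ rescale $x \mapsto tx$, $y\mapsto ty$ to pull out $\js{t^2}\js{t^2}=1$, arriving at a sum of the form $\sum_{x,y(p)} \sum_{t=1}^{p-1} \js{\big(t(x^3-x^2+x)-1\big)\big(t(y^3-y^2+y)-1\big)}$ plus boundary corrections from the $x=0$ and $y=0$ terms. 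Writing $f(x) = x^3-x^2+x = x(x^2-x+1)$, the inner object is a quadratic-in-$t$ Legendre sum $\sum_{t(p)}\js{f(x)f(y)\,t^2 - (f(x)+f(y))t + 1}$ (after re-including $t=0$ and subtracting $\js{1}=1$), to which Lemma~\ref{lem:sumlegendrelinquad} applies directly.

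The heart of the matter is the case analysis for that quadratic Legendre sum. When $f(x)f(y)\not\equiv 0$, the leading coefficient is a unit, so the sum is $-\js{f(x)f(y)}$ unless the discriminant $(f(x)+f(y))^2 - 4f(x)f(y) = (f(x)-f(y))^2$ vanishes mod $p$, i.e.\ unless $f(x)\equiv f(y)$, in which case it is $(p-1)\js{f(x)f(y)} = (p-1)\js{f(x)}^2$. When exactly one of $f(x),f(y)$ is zero mod $p$ the summand degenerates to a linear Legendre sum in $t$, which vanishes; when both are zero the summand is the constant $\js{1}=1$, contributing $p$ for each such pair. So the total splits as: a main contribution $-\sum_{x,y: f(x)f(y)\ne 0}\js{f(x)}\js{f(y)}$, which up to correcting the $f(x)=0$ rows/columns is essentially $-\left(\sum_{x(p)}\js{x^3-x^2+x}\right)^2$ — this is the $\left[\sum_{x(p)}\js{\cdots}\right]^2$ term in the statement; a diagonal-type correction $p\cdot\#\{(x,y): f(x)\equiv f(y)\not\equiv 0\}$ coming from the discriminant-vanishing case; and a $p\cdot\#\{(x,y): f(x)\equiv f(y)\equiv 0\}$ term. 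The zeros of $f$ mod $p$ are $x=0$ together with the roots of $x^2-x+1$, which (as in the first-moment proof, via $x^2-x+1$ having discriminant $-3$) exist iff $p\equiv 1\bmod 3$; this is where the $\delta_{2,3}(p)$ and $\js{-3}$ factors enter, since $p\equiv 2\bmod3 \iff \js{-3}=-1$.

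The main obstacle I anticipate is the bookkeeping in counting $\#\{(x,y): f(x)\equiv f(y)\bmod p\}$ and reconciling all the boundary corrections — the $t=0$ term, the re-inclusion/subtraction of $\js{1}$, the $x=0$ and $y=0$ rows, and the overlap between the ``$f(x)=f(y)$'' set and the ``$f(x)=f(y)=0$'' set — so that the three genuinely distinct non-main pieces collapse to exactly $-2p\,\delta_{2,3}(p)$, $-2p\js{-3}$, and $-p\js{-2}$. The $\js{-2}$ term in particular will require care: it should emerge from a sub-sum where, after the change of variables, one is left evaluating something like $\sum_x \js{g(x)}$ for a specific quadratic or from tracking a Legendre weight $\js{-1}\js{2}=\js{-2}$ attached to one of the degenerate configurations; pinning down precisely which configuration produces it, and with what multiplicity, is the delicate step. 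Once the count $\#\{(x,y):f(x)\equiv f(y)\}$ is expressed in terms of how many roots $f$ has mod $p$ (one root if $p\equiv 2\bmod 3$, three if $p\equiv1\bmod3$) and the level sets of the degree-3 map $f$ are understood, everything should assemble into the claimed expression $p^2 - 2p\,\delta_{2,3}(p) - 2p\js{-3} - p\js{-2} - \left[\sum_{x(p)}\js{x^3-x^2+x}\right]^2$, and the negative bias follows because the surviving non-oscillating term is $-2p\,\delta_{2,3}(p)$ together with the $\js{-3}$, $\js{-2}$ terms which average to zero, leaving a net negative average.
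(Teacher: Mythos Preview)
Your approach is essentially the paper's: rescale $x\mapsto tx$, $y\mapsto ty$, reduce to the quadratic-in-$t$ Legendre sum with leading coefficient $f(x)f(y)$ where $f(x)=x^3-x^2+x$, and split according to whether the discriminant $(f(x)-f(y))^2$ vanishes. Your observation that on the set $\{f(x)\equiv f(y)\not\equiv 0\}$ one has $\js{f(x)f(y)}=\js{f(x)^2}=1$ is in fact cleaner than the paper's case-by-case verification of this same fact. (Minor slip: for a one-parameter family the normalization is $pA_{2,\mathcal F}(p)=\sum_{t(p)}a_t(p)^2$, not $p^2A_{2,\mathcal F}(p)$.)

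The one genuine gap is the step you flag yourself: you do not yet know how to count $\#\{(x,y):f(x)\equiv f(y)\}$ or where $\js{-2}$ enters. The missing idea is the explicit factorization
\[
f(x)-f(y)\ =\ (x-y)\bigl(x^2+xy-x+y^2-y+1\bigr)\ =:\ \delta_1(x,y)\,\delta_2(x,y).
\]
Counting zeros of $\delta_1$ is trivial ($p$ pairs). Counting zeros of $\delta_2$ is a quadratic in $y$ with discriminant $-3x^2+2x-3$, so the number of $(x,y)$ with $\delta_2\equiv 0$ is $\sum_x\bigl(1+\js{-3x^2+2x-3}\bigr)=p-\js{-3}$ by the quadratic Legendre-sum lemma. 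The doubly counted pairs satisfy $x=y$ and $\delta_2(x,x)=3x^2-2x+1\equiv 0$, whose discriminant is $4-12=-8$, giving $1+\js{-2}$ such pairs; this is precisely the source of the $-p\js{-2}$ term. The remaining $\delta_{2,3}(p)$ and $\js{-3}$ pieces come from tracking the roots of $x^2-x+1$ (present iff $p\equiv 1\bmod 3$) through the boundary corrections, exactly as you anticipated.
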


\begin{proof} We have
\begin{eqnarray}
    pA_{2,\mathcal{F}}(p) & \ = \ & \sum_{t(p)}{a_t}^2(p)\nonumber\\
    & = & \sum_{t(p)}\sum_{x(p)}\sum_{y(p)}\js{x^3-tx^2+xt^2-t^2}\js{y^3-ty^2+yt^2-t^2} \nonumber\\
    & = & \sum_{t=1}^{p-1}\sum_{x,y(p)}\js{t^3x^3-t^3x^2+t^3x-t^2}\js{t^3y^3-t^3y^2+t^3y-t^2} \nonumber\\
    & = & \sum_{t=1}^{p-1}\sum_{x,y(p)}\js{t^4}\js{t(x^3-x^2+x)-1}\js{t(y^3-y^2+y)-1} \nonumber\\
    & = & \sum_{t=0}^{p-1}\sum_{x,y(p)}\js{t(x^3-x^2+x)-1}\js{t(y^3-y^2+y)-1}-\sum_{x,y(p)}\js{-1}\js{-1} \nonumber\\
    & = & \sum_{t(p)}\sum_{x,y(p)}\js{t(x^3-x^2+x)-1}\js{t(y^3-y^2+y)-1}-p^2.
\end{eqnarray}

We have a triple sum involving $x, y$ and $t$. If we fix $x$ and $y$ we have a quadratic in $t$. We can determine its contribution by using Lemma \ref{lem:sumlegendrelinquad}; to do so we need to compute the discriminant of the quadratic in $t$, denoted by $\delta(x,y)$:
\begin{eqnarray}
    a & \ = \ & (x^3-x^2+x)(y^3-y^2+y)=xy(x^2-x+1)(y^2-y+1) \nonumber\\
    b & \ = \ & -[(x^3-x^2+x)+(y^3-y^2+y)] \nonumber\\
    c & \ = \ & 1 \nonumber\\
    \delta(x,y) & \ = \ & b^2-4ac = [(x^3-x^2+x)-(y^3-y^2+y)]^2 \nonumber\\
    & \ = \ & \left((x-y)(x^2+xy-x+y^2-y+1)\right)^2 = (\delta_1(x,y)\delta_2(x,y))^2
\end{eqnarray}

Let the vanishing sum be $V(p)$, the contribution when either $x^3-x^2+x$ or $y^3-y^2+y$ or both vanish. We have
\begin{eqnarray}
    pA_{2,\mathcal{F}}(p) & \ = \ & \sum_{t(p)}\sum_{x,y(p)}\js{t(x^3-x^2+x)-1}\js{t(y^3-y^2+y)-1}-p^2 \nonumber\\
    & = & \sum_{t(p)}\sum_{x^3-x^2+x\not\equiv0, \atop y^3-y^2+y \not=0}\js{t(x^3-x^2+x)-1}\js{t(y^3-y^2+y)-1}+V(p)-p^2 \nonumber\\
     & = & \sum_{t(p)}\sum_{x^3-x^2+x, y^3-y^2+y\not\equiv 0 \atop \delta=0}\js{t(x^3-x^2+x)-1}\js{t(y^3-y^2+y)-1} \nonumber\\
  && \ + \  \sum_{t(p)}\sum_{x^3-x^2+x, y^3-y^2+y\not\equiv=0, \atop \delta \not=0}\js{t(x^3-x^2+x)-1}\js{t(y^3-y^2+y)-1}
  \nonumber\\ & & \ \ \ \ \ \ + \ V(p)-p^2 \nonumber\\
     & = & (p-1)\sum_{x^3-x^2+x, y^3-y^2+y \not\equiv=0, \atop \delta=0}\js{(x^3-x^2+x)(y^3-y^2+y)} \nonumber\\
    && \ - \ \sum_{x^3-x^2+x, y^3-y^2+y \not\equiv=0, \atop \delta\not=0}\js{(x^3-x^2+x)(y^3-y^2+y)} + V(p)-p^2 \nonumber\\
     &=& p\sum_{x^3-x^2+x, y^3-y^2+y \not\equiv=0, \atop \delta=0}\js{(x^3-x^2+x)(y^3-y^2+y)} \nonumber\\
    && \ - \ \sum_{x^3-x^2+x\not\equiv0, \atop y^3-y^2+y \not=0}\js{(x^3-x^2+x)(y^3-y^2+y)} + V(p)-p^2.
\end{eqnarray}

We first calculate $V(p)$; we use inclusion/exclusion on $x^3-x^2+x$ and $y^3-y^2+y$ vanishing. Assume first that $x^3-x^2+x$ equals to zero (there are three solutions when $p$ is 1 mod 3: $x=0$, and $x^2-x+1$ has two roots; there is one solution when $p$ is 2 mod 3: $x=0$, and $x^2-x+1$ has no root mod $p$). Then we have $\js{t*0-1}\sum_{t(p)}\sum_{y(p)}\js{t(y^3-y^2+y)-1}=\js{-1}\sum_{t(p)}\sum_{y(p)}\js{t(y^3-y^2+y)-1}$, which is $\js{-1}^2 \cdot 3p =3p$ if $p \equiv 1 \bmod 3$ and $\js{-1}^2 \cdot 1p=1p$ if $p \equiv 2 \bmod 3$ from our $A_{1,\mathcal{F}}(p)$ computation, giving $3 \cdot 3p = 9p$ if $p \equiv 1 \bmod 3$ and $1 \cdot 1p = 1p$ if $p \equiv 2 \bmod 3$.

Now we assume $y^3-y^2+y \equiv 0 \bmod 3$, which similarly gives us $9p$ if $p$ is 1 mod 3, and $p$ if $p$ is 2 mod 3. We subtract the doubly-counted $x^3-x^2+x\equiv y^3-y^2+y \equiv 0$, which equals to $9p$ if $p \equiv 1 \bmod 3$ and $p$ if $p \equiv 2 \bmod 3$. Hence, the contribution from at least one of $x^3-x^2+x$ and $y^3-y^2+y$ vanishing is $9p$ if $p \equiv 1 \bmod 3$ and $p$ if $p \equiv 2 \bmod 3$.

Next, we are going to calculate the contribution of $\sum_{x^3-x^2+x\not\equiv0, \atop y^3-y^2+y \not=0,  \delta=0}\js{(x^3-x^2+x)(y^3-y^2+y)}$. We break into two cases, when each factor of $\delta(x,y)$ is zero modulo $p$, and then deal with the doubly counted pairs.

\bigskip

Case 1: $\delta_1(x,y) \equiv 0 \bmod p$: As $\delta_1(x,y) = x-y$,
we have $x=y$, and the Legendre symbol is 1 as it is a square, so the double-sum is $p-3$ if $p \equiv 1 \bmod 3$ and is $p-1$ if $p \equiv 2 \bmod 3$.

\bigskip

Case 2: $\delta_2(x,y) \equiv 0 \bmod p$: When is $\delta_2(x,y)\ =\ x^2+xy-x+y^2-y+1 = y^2+(x-1)y+(x^2-x+1) \equiv 0(p)$? Using Lemma \ref{lem:quadformulamodp}, we have
\begin{eqnarray}
    y & \ = \ & \frac{-(x-1)\pm\sqrt{(x-1)^2-4(x^2-x+1)}}{2}\nonumber\\
    & = &  \frac{-(x-1)\pm\sqrt{-3x^2+2x-3}}{2},
\end{eqnarray}
which reduces to finding when $-3x^2+2x-3$ is a square mod $p$. We get two values of $y$ if it is equivalent to a non-zero square, one value if it is equivalent to zero, and no value if it is not equivalent to a square. When solving $\delta_2(x,y) \equiv 0(p)$, we make sure such $y \not\in \{ 0,\frac{1 \pm \sqrt{-3}}{2}\}$, as we are assuming $y^3-y^2+y \not\equiv 0(p)$. If $y=0$, we have $x^2-x+1 \equiv 0(p)$ and the solutions are $x=\frac{1 \pm \sqrt{-3}}{2}$. We have excluded this case so we do not need to worry about $y=0$. By symmetry if $y$ is $\frac{1 \pm \sqrt{-3}}{2}$, we have $x=0$ and we can exclude this case too. When we calculate the contribution from $\delta_2(x,y) \equiv 0 \bmod p$ we encounter the following sum: $\sum_{x^2 - x + 1 \equiv 0 \bmod p}\js{-3x^2+2x-3}$, and since $x^2-x+1 \equiv 0 \bmod p$ we will just have $\sum_{x^2 - x + 1 \equiv 0 \bmod p} \js{-3(x^2-x+1)-x}=\sum_{x^2 - x + 1 \equiv 0 \bmod p} \js{-x}$. We show that this sum is $2$ when $p$ is $1 \bmod 3$; it is zero when $p$ is $2 \bmod 3$ as for those primes there are no roots to $x^2 - x + 1 \equiv 0 \bmod p$. Note $-x \equiv x^2 - x + 1 - x \bmod p$, as $x^2 - x + 1 \equiv 0 \bmod p$. Thus $-x \equiv x^2 - 2x + 1 = (x-1)^2 \bmod p$, and $\js{-x} = 1$ for $x$ such that $x^2 - x + 1 \equiv 0 \bmod p$. If $p \equiv 1 \bmod 3$, the contribution in case 2 is

\begin{eqnarray}
    \sum_{x^3-x^2+x \not\equiv 0(p)}\left[1+\js{-3x^2+2x-3}\right]
    & \ = \ & p-3 + \sum_{x^3-x^2+x \not\equiv 0(p)}\js{-3x^2+2x-3} \nonumber\\
    & = & p-3 + \sum_{x(p)}\js{-3x^2+2x-3} \nonumber\\
    && \ - \ \sum_{x=0}\js{-3x^2+2x-3}-\sum_{x^2-x+1\equiv0(p)}\js{-x} \nonumber\\
    & = & p-3 + \sum_{x(p)}\js{-3x^2+2x-3}-\js{-3}\nonumber\\
    && \ - \ \sum_{x^2-x+1\equiv0(p)}\js{-x} \nonumber\\
    & = & p-3+\sum_{x(p)}\js{-3x^2+2x-3}-\js{-3}-2 \nonumber\\
    & \ = \ & p-5-\js{-3}+\sum_{x(p)}\js{-3x^2+2x-3}.
\end{eqnarray}

For such $p \equiv 2 \bmod 3$ we do not have solutions to $x^2 - x + 1 = 0$, so we have fewer terms, and the contribution is
\begin{eqnarray}
    \sum_{x^3-x^2+x \not\equiv 0(p)}\left[1+\js{-3x^2+2x-3}\right]
    & \ = \ & p-1 + \sum_{x^3-x^2+x \not\equiv 0(p)}\js{-3x^2+2x-3} \nonumber\\
    & = & p-1 + \sum_{x(p)}\js{-3x^2+2x-3} \nonumber\\
    && \ - \ \sum_{x=0}\js{-3x^2+2x-3} \nonumber\\
    & = & p-1 + \sum_{x(p)}\js{-3x^2+2x-3}-\js{-3}.
\end{eqnarray}

We use Lemma \ref{lem:quadformulamodp} again. The discriminant now is $2^2-4(-3)(-3)=-32$. Hence, for $p \geq 5$, $p$ does not divide the discriminant, and $\sum_{x(p)}\js{-3x^2+2x-3}$ is $-\js{-3}$.

Thus, if $p \equiv 1 \bmod 3$, for $x \not=0, \frac{1 \pm \sqrt{-3}}{2}$, the number of solutions with $x^2+xy-x+y^2-y\equiv-1$ is $p-5-2\js{-3}$; the number of solutions with $x-y\equiv 0$ is $p-3$. If $p \equiv 2 \bmod 3$, for $x \not=0$, the number of solutions with $x^2+xy-x+y^2-y\equiv-1$ is $p-1-2\js{-3}$; the number of solutions with $x-y\equiv 0$ is $p-1$. To count how many solutions there are of $3x^2 - 2x + 1$, we use the quadratic formula modulo $p$, and note the discriminant is $4 - 12 = -8 = 4(-2)$. So if $-2$ is a square there are two roots, if $-2$ is zero there are no roots and if $-2$ is not a square there are no roots. Thus the number of doubly counted solutions is $1 + \js{-2}$. If $p \equiv 1 \bmod 3$, the total number of pairs is

\begin{eqnarray}
    p-3+p-5-2\js{-3}-1-\js{-2}=2p-9-2\js{-3}-\js{-2},
\end{eqnarray}
and if $p \equiv 2 \bmod 3$, the total number of pairs is
\begin{eqnarray}
    p-1+p-1-2\js{-3}-1-\js{-2}=2p-3-2\js{-3}-\js{-2}.
\end{eqnarray}

We will later see that all the Legendre coefficients multiplying the number of solutions are always $1$, so we just need to count the number of solutions. When $x=y$ (and $x^3-x^2+x\not\equiv0$, $y^3-y^2+y\not\equiv0$), clearly $\js{(x^3-x^2+x)(y^3-y^2+y)}=1$ and these terms each contribute $1$.

Consider $x \not= y$ and $x^2+xy-x+y^2-y+1 \equiv 0$ (and $x^3-x^2+x\not\equiv0$, $y^3-y^2+y\not\equiv0$). Then $x,y\not\equiv0$ and $x^2-x+1\equiv y(-y+1-x)$ and $y^2-y+1\equiv x(-x+1-y)$ and
\begin{eqnarray}
    \js{(x^3-x^2+x)(y^3-y^2+y)} = \js{x^2y^2(-x+1-y)^2}.
\end{eqnarray}
We can see that as long as $x+y \not=1$, all pairs have their Legendre factor $+1$. If $y=1-x$, then we would have $\delta_2(x,y)=y^2+(x-1)y+x^2-x+1 = (1-x)^2+(x-1)(1-x)+x^2-x+1 = x^2-x+1 \equiv 0$, which is impossible as we are assuming in this case that $x^3 - x^2 + x$ is not $0 \bmod p$. Therefore, putting all the pieces together, if $p \equiv 1 \bmod 3$, we have the following form

\begin{eqnarray}
   pA_{2,\mathcal{F}}(p) & \ = \ & p\left(2p-9-2\js{-3}-\js{-2}\right)-\sum_{x,y\not\equiv0, \frac{1 \pm \sqrt{-3}}{2}} \js{(x^3-x^2+x)(y^3-y^2+y)}+9p \nonumber\\
   && \ - \ p^2\nonumber\\
   & = & p^2-2p\js{-3}-p\js{-2}-\left[\sum_{x(p)}\js{x^3-x^2+x}\right]^2,
\end{eqnarray}
and if $p \equiv 2 \bmod 3$, we have the following form
\begin{eqnarray}
   pA_{2,\mathcal{F}}(p) & \ = \ & p\left(2p-3-2\js{-3}-\js{-2}\right)-\sum_{x,y\not\equiv0} \js{(x^3-x^2+x)(y^3-y^2+y)}+p-p^2\nonumber\\
   & = & p^2-2p-2p\js{-3}-p\js{-2}-\left[\sum_{x(p)}\js{x^3-x^2+x}\right]^2.
\end{eqnarray}

\end{proof}

\subsection{Two-Parameter Family}

In the case of our two-parameter family, we unfortunately are going to have cubic and higher degree of Legendre sums; however, by cleverly writing $s$ in terms of $t$, we get linear and quadratic Legendre sums and can calculate them in closed form. Then, we count the number of ways the discriminant of this new closed form vanishes, or equals 0, because these pairs contribute a Legendre factor of +1.

\begin{lemma}
The first moment of the two-parameter family $y^2=x^3+t^2x+st^4$ is $0$.
\end{lemma}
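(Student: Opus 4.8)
The plan is to expand the (two-parameter) first moment as a triple Legendre sum over $x$, $t$, and the parameter $s$, and then to sum over $s$ \emph{first}, exploiting the fact that in this family $s$ enters the cubic only through the linear term $st^4$. Concretely, using \eqref{eq:jacobisumec} and the natural extension of the moment definition to two-parameter families,
\begin{eqnarray}
p^2 A_{1,\mathcal{F}}(p) \ = \ \sum_{s,t \bmod p} a_{s,t}(p) \ = \ -\sum_{t \bmod p}\sum_{x \bmod p}\sum_{s \bmod p}\js{x^3 + t^2 x + s t^4}.
\end{eqnarray}
So the goal reduces to showing the triple sum on the right vanishes.

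First I would interchange the order of summation so that the $s$-sum is innermost, and split on whether $t \equiv 0 \bmod p$. If $t \not\equiv 0 \bmod p$, then $t^4 \not\equiv 0 \bmod p$, so for each fixed pair $(t,x)$ the inner sum $\sum_{s \bmod p}\js{t^4 s + (x^3 + t^2 x)}$ is a linear Legendre sum with leading coefficient $t^4$ coprime to $p$; by the first part of Lemma \ref{lem:sumlegendrelinquad} it equals $0$. Hence every term with $t \not\equiv 0 \bmod p$ contributes nothing to the triple sum.

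It then remains to handle the degenerate locus $t \equiv 0 \bmod p$. There the summand collapses to $\js{x^3}$, which no longer depends on $s$, so the $s$-sum merely multiplies by $p$, leaving $-p\sum_{x \bmod p}\js{x^3}$. Since the Legendre symbol is $\{-1,0,1\}$-valued we have $\js{x^3}=\js{x}^3=\js{x}$ for every $x$, so this is $-p\sum_{x \bmod p}\js{x} = 0$ (the $a=1,b=0$ instance of Lemma \ref{lem:sumlegendrelinquad}, i.e. there are equally many nonzero squares and nonsquares mod $p$). Combining the two cases gives $p^2 A_{1,\mathcal{F}}(p)=0$.

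I do not expect a genuine obstacle: the family $y^2 = x^3 + t^2 x + s t^4$ was evidently designed so that $s$ occurs only in the coefficient $st^4$, which makes the $s$-sum a trivial linear Legendre sum. The one place demanding a sentence of care is the separation of the locus $t \equiv 0 \bmod p$, where the coefficient of $s$ vanishes and the ``linear sum is zero'' argument no longer applies; but that locus contributes $-p\sum_{x\bmod p}\js{x}=0$ as well, so it is harmless.
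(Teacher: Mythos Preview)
Your proof is correct and in fact more direct than the paper's. You simply observe that the argument is already linear in $s$ with leading coefficient $t^4$, so for $t\not\equiv 0$ the inner $s$-sum vanishes by Lemma~\ref{lem:sumlegendrelinquad}, and you dispatch the $t\equiv 0$ stratum separately with $\sum_x\js{x^3}=\sum_x\js{x}=0$. The paper instead first substitutes $x\mapsto tx$ (for $t\neq 0$), pulls out a factor $\js{t^3}=\js{t}$, and then substitutes $s\mapsto t^{-1}s$ before invoking the linear Legendre sum; these extra changes of variable are the same ones used in the paper's second-moment computation for this family, so they are natural there but unnecessary here. Your argument buys brevity and a cleaner handling of the degenerate locus $t\equiv 0$ (which the paper drops without comment); the paper's buys parallelism with the second-moment calculation.
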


\begin{proof} We have
\begin{eqnarray}
    -p^2A_{1,\mathcal{F}}(p) & \ = \ & -\sum_{t(p)}\sum_{s(p)}a_{t,s}(p) =\sum_{t(p)}\sum_{x(p)}\sum_{s(p)}\js{x^3+t^2x+st^4} \nonumber\\
    & = & \sum_{t=1}^{p-1}\sum_{x(p)}\sum_{s(p)}\js{t^3x^3+t^3x+st^4}\nonumber\\
    & = & \sum_{t=1}^{p-1}\sum_{x(p)}\sum_{s(p)}\js{t^3}\js{x^3+x+st}\nonumber\\
    & = & \sum_{x(p)}\sum_{s(p)}\sum_{t(p)}\js{t}\js{st+(x^3+x)}\nonumber\\
    & = & \sum_{x(p)}\sum_{s(p)}\sum_{t(p)}\js{t}\js{t^{-1}st+(x^3+x)}\nonumber\\
    & = & \sum_{x(p)}\sum_{s(p)}\sum_{t(p)}\js{t}\js{s+(x^3+x)}.
\end{eqnarray} In the proof we used that, at one point, $t$ was not zero. We sent $s$ to $t^{-1} s$, and looked at the resulting $s$ sum, which equals to zero.
\end{proof}

\begin{lemma}
The second moment times $p^2$ of the two-parameter family $y^2=x^3+t^2x+st^4$ is $p^3-2p^2+p-2(p^2-p)\js{-3}$, which supports the bias conjecture.
\end{lemma}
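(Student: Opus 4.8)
The plan is to mirror the first-moment computation for this family: scale away the parameters $t$ and $s$ so the $t$-sum collapses, and reduce everything to counting solutions of a single cubic congruence. First I would expand
\[
p^2 A_{2,\mathcal{F}}(p) \ = \ \sum_{t(p)}\sum_{s(p)} a_{t,s}(p)^2 \ = \ \sum_{t(p)}\sum_{s(p)}\sum_{x(p)}\sum_{y(p)} \js{x^3+t^2x+st^4}\js{y^3+t^2y+st^4}.
\]
The term $t\equiv 0$ contributes $0$, since $a_{0,s}(p) = -\sum_x\js{x^3} = -\sum_x\js{x} = 0$. For $t\not\equiv 0$, send $x\mapsto tx$ and $y\mapsto ty$ (bijections of $\mathbb{F}_p$) to factor out $\js{t^3}^2 = 1$, and then send $s\mapsto t^{-1}s$ to clear $t$ from the summand entirely; this leaves
\[
p^2 A_{2,\mathcal{F}}(p) \ = \ (p-1)\sum_{s(p)}\sum_{x(p)}\sum_{y(p)} \js{x^3+x+s}\js{y^3+y+s}.
\]

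Next I would perform the inner $s$-sum for fixed $x,y$. Writing $a=x^3+x$ and $b=y^3+y$, the sum $\sum_{s(p)}\js{(s+a)(s+b)}$ equals $p-1$ when $a\equiv b\bmod p$ (it is $\sum_s\js{(s+a)^2}$) and equals $-1$ otherwise, by Lemma \ref{lem:sumlegendrelinquad} applied to the quadratic $s^2+(a+b)s+ab$ of discriminant $(a-b)^2$. Crucially, no further Legendre weight depending on $x,y$ appears. Hence, letting $N$ be the number of pairs $(x,y)\bmod p$ with $x^3+x\equiv y^3+y\bmod p$, the triple sum equals $N(p-1)-(p^2-N) = Np-p^2$, so $p^2A_{2,\mathcal{F}}(p) = (p-1)p(N-p)$.

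The remaining task is to show $N = 2p-1-2\js{-3}$ for $p>3$. Factor
\[
x^3+x-(y^3+y) \ = \ (x-y)\bigl(x^2+xy+y^2+1\bigr),
\]
and apply inclusion–exclusion to the two factors. The diagonal $x=y$ gives $p$ pairs. For the conic $x^2+xy+y^2+1\equiv 0$, fixing $x$ yields a quadratic in $y$ of discriminant $-3x^2-4$, so by Lemma \ref{lem:quadformulamodp} the number of such pairs is $\sum_{x(p)}\bigl(1+\js{-3x^2-4}\bigr) = p+\sum_{x(p)}\js{-3x^2-4}$; since the discriminant of $-3x^2-4$ is $-48$, which is nonzero mod $p$ for $p>3$, Lemma \ref{lem:sumlegendrelinquad} gives this sum as $p-\js{-3}$. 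Their intersection forces $3x^2+1\equiv 0$, i.e.\ $x^2\equiv -3^{-1}$, which has $1+\js{-3}$ solutions (as $-3^{-1}$ and $-3$ have the same quadratic character). Thus $N = p+(p-\js{-3})-(1+\js{-3}) = 2p-1-2\js{-3}$, and substituting back,
\[
p^2A_{2,\mathcal{F}}(p) \ = \ (p-1)p\bigl(p-1-2\js{-3}\bigr) \ = \ p^3-2p^2+p-2(p^2-p)\js{-3}.
\]

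The main obstacle is comparatively mild here: it is first justifying the $s\mapsto t^{-1}s$ scaling (hence the need to isolate $t\equiv 0$), and then the bookkeeping in computing $N$ — in particular verifying that each discriminant that arises ($-48$ for the conic, and $-3^{-1}$ for the intersection) is nonzero modulo $p$ for $p>3$ so that Lemmas \ref{lem:sumlegendrelinquad} and \ref{lem:quadformulamodp} apply cleanly. Unlike several families in the paper, the solution counts here carry no extra Legendre weights, so once $N$ is pinned down the stated formula follows at once; small primes $p\le 3$ can be checked separately if desired.
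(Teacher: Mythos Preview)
Your proof is correct and follows essentially the same approach as the paper: scale $x,y$ by $t$, eliminate one parameter by a multiplicative substitution, reduce to a quadratic Legendre sum with discriminant $((x^3+x)-(y^3+y))^2$, and count $N$ via the factorization $(x-y)(x^2+xy+y^2+1)$ and inclusion--exclusion. The only cosmetic differences are that the paper sends $t\mapsto s^{-1}t$ rather than $s\mapsto t^{-1}s$, and handles the $t=0$ contribution by an add-and-subtract cancellation with $s=0$ rather than your direct observation that $a_{0,s}(p)=-\sum_x\js{x^3}=0$.
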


\begin{proof} We have
   \begin{align}
    p^2A_{2,\mathcal{F}}(p) & = \sum_{t,s(p)}{a_{t,s}}^2(p) \nonumber\\
    & =  \sum_{t(p)}\sum_{s(p)}\sum_{x,y(p)}\js{x^3+t^2x+st^4}\js{y^3+t^2y+st^4} \nonumber\\
    &= \sum_{t=1}^{p-1}\sum_{s(p)}\sum_{x,y(p)}\js{t^3x^3+t^3x+st^4}\js{t^3y^3+t^3y+st^4} \nonumber\\
    & =  \sum_{t=1}^{p-1}\sum_{s(p)}\sum_{x,y(p)}\js{t^6}\js{x^3+x+st}\js{y^3+y+st}\nonumber\\
    & =  \sum_{t=0}^{p-1}\sum_{s(p)}\sum_{x,y(p)}\js{x^3+x+st}\js{y^3+y+st}-\sum_{s(p)}\sum_{x,y(p)}\js{x^3+x}\js{y^3+y} \nonumber\\
    & =  \sum_{x,y(p)}\sum_{s(p)}\sum_{t(p)}\js{st+(x^3+x)}\js{st+(y^3+y)}-p\left[\sum_{x(p)}\js{x^3+x}\right]^2 \nonumber\\
    & =  \sum_{s=0}\sum_{t(p)}\left[\sum_{x(p)}\js{x^3+x}\right]^2+\sum_{x,y(p)}\sum_{s\not=0}\sum_{t(p)}\js{st+(x^3+x)}\js{st+(y^3+y)} \nonumber\\ & -p\left[\sum_{x(p)}\js{x^3+x}\right]^2\nonumber\\
    & =  \sum_{x,y(p)}\sum_{s\not=0}\sum_{t(p)}\js{st+(x^3+x)}\js{st+(y^3+y)}\nonumber\\
    & =  \sum_{x,y(p)}\sum_{s\not=0}\sum_{t(p)}\js{ss^{-1}t+(x^3+x)}\js{ss^{-1}t+(y^3+y)}\nonumber\\
    & =  \sum_{x,y(p)}\sum_{s\not=0}\sum_{t(p)}\js{t+(x^3+x)}\js{t+(y^3+y)} \nonumber\\
    & = (p-1)\sum_{x,y(p)}\sum_{t(p)}\js{t+(x^3+x)}\js{t+(y^3+y)},
\end{align} where in passing from the second to the third line we sent $x$ and $y$ modulo $p$ to $tx$ and $ty$, which is valid so long as $t$ is not zero; to keep the sum over all $t$ we need to subtract the $t=0$ contribution. We can also see that when $s=0$, since the $t$-sum is $p$ and there is no $t$ dependence, the contribution from $s=0$ and $t=0$ cancel out each other. Note that now as $s$ is non-zero, we can send $t$ to $s^{-1} t$, and we get a nice quadratic sum in $t$.

We use Lemma \ref{lem:sumlegendrelinquad}. The discriminant of the quadratic in $t$, $\delta(x,y)$, equals
\begin{eqnarray}
    a & \ = \ & 1 \nonumber\\
    b & \ = \ & (x^3+x)+(y^3+y) \nonumber\\
    c & \ = \ & (x^3+x)(y^3+y) \nonumber\\
    \delta (x,y) & \ = \ & b^2-4ac = [(x^3+x)-(y^3+y)]^2\nonumber\\
    & = & [(x-y)(y^2+xy+(1+x^2))]^2,
\end{eqnarray}
and we are going to count the number of ways it vanishes. Therefore,
\begin{align}
    p^2A_{2,\mathcal{F}}(p) & \ = \  (p-1)\left[\sum_{x,y \bmod p \atop \delta(x,y)\equiv 0(p)}\sum_{t(p)}\js{t+(x^3+x)}\js{t+(y^3+y)}\right. \nonumber\\
    &  \left. \ \ \ \ \ \ \ \ \ \ \ \ +  \sum_{x,y \bmod p \atop \delta(x,y)
    \not\equiv0(p)}\sum_{t(p)}\js{t+(x^3+x)}\js{t+(y^3+y)}\right]\nonumber
\end{align}
\begin{align}
    &\ =\ (p-1)\left[\sum_{x,y \bmod p \atop \delta(x,y)\equiv 0(p)}(p-1)+ \sum_{x,y \bmod p \atop \delta(x,y)\not\equiv 0(p)}(-1)\right]
    \nonumber\\
    &\ =\ (p-1)\left[p\sum_{x,y \bmod p \atop \delta(x,y)\equiv 0(p)}+p^2(-1)\right].
\end{align}

We have three cases for $\delta(x,y)\equiv0(p)$.

Case 1: We need to count the number of solutions of $\delta_1(x,y)=x-y\equiv 0$, which happens $p$ times when $x=y$.

\bigskip

Case 2: We need to count the number of solutions of $\delta_2(x,y)=y^2+xy+(1+x^2) \equiv 0$. By the Quadratic Formula mod p, we have
\begin{eqnarray}
    y=\frac{-x \pm \sqrt{-3x^2-4}}{2},
\end{eqnarray}
which reduced to finding when $-3x^2-4$ is a square. Thus, summing over $x$ for $p>2$ yields
\begin{eqnarray}
\sum_{x(p)}\left[1+\js{-3x^2-4}\right] &\ =\ & p + \sum_{x(p)} \js{-3x^2-4} \nonumber\\
&\ =\ & p-\js{-3},
\end{eqnarray}
which follows from Lemma \ref{lem:sumlegendrelinquad}. The discriminant now is $0^2-4 \cdot (-3) \cdot (-4)$. For $p \geq 5$, $p$ does not divide the discriminant, hence this sum is $p-\js{-3}$.

\bigskip

Case 3: We need to be careful and remove the contribution from doubly counted tuples. The double counted pairs satisfy both $x=y$ and $y^2+xy+(1+x^2)\equiv 0(p)$, which means that they satisfy $3x^2+1\equiv0(p)$, or $-3x^2\equiv1$. Thus, there is a double-counted solution if and only if $\js{-3}=1$, and the number of double-counted pairs is $1+\js{-3}$.

\bigskip

Therefore, the total number of pairs for $\delta(x,y)\equiv0(p)$ is
\begin{eqnarray}
   \sum_{\delta_1(x,y)\equiv0}+\sum_{\delta_2(x,y)\equiv0}-\sum_{\delta_1(x,y)\equiv0; \delta_2(x,y)\equiv0} & \ = \ & p+p-\js{-3}-1-\js{-3} \nonumber\\
   & = & 2p-1-2\js{-3}.
\end{eqnarray}

Hence, the second moment times $p^2$ of the family equals
\begin{eqnarray}
    p^2A_{2,\mathcal{F}}(p)
    &\ =\ & (p-1)\left[p\left(2p-1-2\js{-3}\right)+p^2(-1)\right] \nonumber\\
    & = & p(p-1)\left(p-1-2\js{-3}\right) \nonumber\\
    & = & p^3-2p^2+p-2(p^2-p)\js{-3}.
\end{eqnarray}
\end{proof}

\section{Motivation Behind Studying $L$-Functions}
\label{appendix}
As seen in the previous sections, our research revolves around $L$-functions. This is a common theme in mathematics: we can take local data and make a global object, and then deduce behaviors about the local data. As an example, let's look at the famous Fibonacci sequence; this section is included as a brief motivation for the power of generating functions by building on an example hopefully familiar to most readers.

The recurrence relation between Fibonacci numbers is
\begin{eqnarray}
    F_{n+1}\ = \ F_{n}+F_{n-1},
\end{eqnarray}
and the sequence starts with
\begin{eqnarray}
    F_0\ = \ 0, \ \ \ \ F_1\ = \ 1.
\end{eqnarray}

Once we have the recurrence relation and the initial conditions, we can in principle compute every Fibonacci number. However, it is time
consuming: to find $F_n$, we must first find $F_i$ for all $i < n$. Binet's Formula allows us to generate any Fibonacci number, and we can derive Binet's Formula using the following generating function
\begin{eqnarray}
    g(x)\ = \ \sum_{n>0}{F}_nx^n.
\end{eqnarray}

After some algebraic manipulations, we get
\begin{eqnarray}
 \sum_{n\ge 2}F_{n+1}x^{n+1} & \ = \  & \sum_{n\ge 2}F_nx^{n+1}+\sum_{n\ge 2}F_{n-1}x^{n+1}\nonumber\\
 \sum_{n\ge 3}F_{n}x^{n} & \ = \  & \sum_{n\ge 2}F_nx^{n+1}+\sum_{n\ge 1}F_{n}x^{n+2}\nonumber\\
 \sum_{n\ge 3}F_{n}x^{n} & \ = \  & x\sum_{n\ge 2}F_nx^{n}+x^2\sum_{n\ge 1}F_{n}x^{n}\nonumber\\
 g(x)-F_1x-F_2x^{2} & \ = \  & x(g(x)-F_1x)+x^2g(x) \nonumber\\
 g(x) & \ = \  & \frac{x}{1-x-x^2}.
\end{eqnarray}
Although we can expand the above equation using the geometric series formula, that is a poor approach as we would have to then expand $(x+x^2)^n$, and as the two terms are of different degrees in $x$, it would be hard to identify the coefficient of $x$ to a given power. Instead it is better to use the partial fraction expansion obtained by factoring the denominator,
\begin{eqnarray}
g(x)\ =\ \frac{x}{1-x-x^2}\ = \ \frac{1}{\sqrt{5}}\left(\frac{\frac{1+\sqrt{5}}{2}x}{1-\frac{1+\sqrt{5}}{2}x}-\frac{\frac{-1+\sqrt{5}}{2}x}{1-\frac{-1+\sqrt{5}}{2}x}\right).
\end{eqnarray}
Then, using the geometric series formula, we obtain Binet's Formula:
\begin{equation}
    F_n \ = \ \frac{1}{\sqrt{5}}\left[\left(\frac{1+\sqrt{5}}{2}\right)^n-\left(\frac{-1+\sqrt{5}}{2}\right)^n\right],
\end{equation}
which allows us to immediately jump to any Fibonacci number.

\section{Forms of 4th and 6th moments sums}

\subsection{Tools for higher moments calculations}

The Dirichlet Coefficients of the elliptic curve $L$-function can be written as\be a_t(p)\ = \ \sqrt{p}\left(e^{i\theta_t(p)}+e^{-i\theta_t(p)}\right)\ = \ 2\sqrt{p}\cos (\theta_t(p)), \ee with $\theta_t(p)$ real; this expansion exists by Hasse's theorem, which states $|a_t(p)| \le 2 \sqrt{p}$. Define
\be {\rm sym}_k(\theta)\ := \ \frac{\sin((k+1)\theta)}{\sin \theta}.\ee
By the angle addition formula for sine,
\be {\rm sym}_k(\theta)\ = \ {\rm sym}_{k-1}(\theta)\cos \theta+ \cos(k\theta). \ee
When $k=1$, we have
\be {\rm sym}_1(\theta)\ = \ 2\cos \theta. \ee
Michel \cite{Mic} proved that
\be \sum_{t(p)} {\rm sym}_k(\theta_t(p))\ = \ O(\sqrt{p}), \ee where the big-Oh constant depends only on the elliptic curve and $k$; thus while we should have a $k$ subscript in the implied constant, as $k$ is fixed in our investigations we omit it for notational simplicity.

\subsection{Form of 4th moments sums}\label{4th_moment_form}

A lot is known about the moments of the $a_t(p)$ for a fixed elliptic curve $E_t$. However, as we are only concerned with averages over one-parameter families, we do not need to appeal to any results towards the Sato-Tate distribution, and instead we can directly prove convergence of the moments on average to the moments of the semicircle. In particular, the average of the $2m$-th moments has main term $\frac1{m+1}\ncr{2m}{m} p^{m-1}$. The coefficients $\frac1{m+1}\ncr{2m}{m}$ are the Catalan numbers, and the first few main terms of the even moments are $p, 2p^2, 5p^3$ and $14p^4$.

\begin{lemma} The average fourth moment of an elliptic surface with $j(T)$ non-constant has main term $2p^2$: \be \sum_{t(p)}{a_t}^4(p)\ = \  2p^3+O(p^\frac{5}{2}).\ee
\end{lemma}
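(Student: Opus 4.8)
The plan is to reduce the fourth moment to Michel's bound $\sum_{t(p)}\mathrm{sym}_k(\theta_t(p))=O(\sqrt p)$ via a Chebyshev-type decomposition of $\cos^4$. First I would dispose of the primes of bad reduction: for a fixed one-parameter family $\mathcal{E}$, the number of $t\bmod p$ for which $\mathcal{E}_t$ has bad reduction at $p$ is bounded independently of $p$ (it is at most the degree of the discriminant polynomial of the family), and for such $t$ one has $|a_t(p)|\le 1$. Hence these $t$ contribute $O(1)$ to $\sum_{t(p)}a_t(p)^4$, which is absorbed into the claimed error term, and we may restrict to the $t$ of good reduction, for which Hasse's theorem gives $a_t(p)=2\sqrt p\,\cos\theta_t(p)$ with $\theta_t(p)$ real.

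Next I would establish the algebraic identity
\[
16\cos^4\theta \ = \ \mathrm{sym}_4(\theta)+3\,\mathrm{sym}_2(\theta)+2 .
\]
This follows either from the recursion $\mathrm{sym}_k(\theta)=\mathrm{sym}_{k-1}(\theta)\cos\theta+\cos(k\theta)$ recorded above (which identifies $\mathrm{sym}_k$ with the Chebyshev polynomial $U_k(\cos\theta)$), or directly from the power-reduction formula $\cos^4\theta=\tfrac18(3+4\cos2\theta+\cos4\theta)$ together with $\mathrm{sym}_2(\theta)=1+2\cos2\theta$ and $\mathrm{sym}_4(\theta)=1+2\cos2\theta+2\cos4\theta$. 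Consequently, for $t$ of good reduction,
\[
a_t(p)^4 \ = \ 16p^2\cos^4\theta_t(p) \ = \ p^2\bigl(\mathrm{sym}_4(\theta_t(p))+3\,\mathrm{sym}_2(\theta_t(p))+2\bigr).
\]

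Finally I would sum over $t\bmod p$. The constant term contributes $2p\cdot p^2=2p^3$ (up to the $O(1)$ correction from the bad-reduction fibers already accounted for), while Michel's theorem gives $\sum_{t(p)}\mathrm{sym}_2(\theta_t(p))=O(\sqrt p)$ and $\sum_{t(p)}\mathrm{sym}_4(\theta_t(p))=O(\sqrt p)$, so those two terms together contribute $p^2\cdot O(\sqrt p)=O(p^{5/2})$. This yields $\sum_{t(p)}a_t(p)^4=2p^3+O(p^{5/2})$, i.e. $A_{4,\mathcal{E}}(p)=2p^2+O(p^{3/2})$, as claimed (and the leading coefficient $2$ is the Catalan number $C_2$, matching the fourth moment of the semicircle). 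There is no serious obstacle once Michel's estimate is granted; the only points needing care are the bad-reduction bookkeeping and pinning down the coefficients in the $\cos^4$ decomposition so that the main term comes out to exactly $2p^3$.
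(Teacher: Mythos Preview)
Your proof is correct and follows essentially the same strategy as the paper: write $a_t(p)^4=16p^2\cos^4\theta_t(p)$, decompose $\cos^4$ in terms of the symmetric functions $\mathrm{sym}_k$, and invoke Michel's bound $\sum_{t(p)}\mathrm{sym}_k(\theta_t(p))=O(\sqrt p)$. Your Chebyshev expansion $16\cos^4\theta=\mathrm{sym}_4(\theta)+3\,\mathrm{sym}_2(\theta)+2$ is in fact cleaner than the paper's route, which passes through $\mathrm{sym}_3(\theta)\cos\theta$ and then appeals to the second-moment formula $\sum_{t(p)}a_t(p)^2=p^2+O(p^{3/2})$ rather than using $\mathrm{sym}_2$ directly; since that second-moment formula is itself equivalent to the $\mathrm{sym}_2$ case of Michel's bound, the two arguments have identical content, and your explicit handling of the bad-reduction fibers is a detail the paper glosses over.
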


\begin{proof} We have to compute
\be {a_t}^4(p)\ = \ 16p^2\cos^4 \theta_t(p). \ee We first collect some useful trigonometry identities:
\bea \cos(2\theta)&\ = \ &2\cos^2(\theta)-1\nonumber\\
\cos^2(\theta)&\ = \ &\frac{1}{2}\cos(2\theta)+\frac{1}{2}.\eea

We use these to re-write $\cos^4\theta$ in terms of quantities we can compute:
\bea \cos^4(\theta)&\ = \ &\frac{1}{4}\cos^2(2\theta)+\frac{1}{2}\cos(2\theta)+\frac{1}{4} \nonumber\\
&\ = \ &\frac{1}{8}\cos(4\theta)+\frac{1}{2}\cos(2\theta)+\frac{3}{8}\nonumber\\
&\ = \ &\frac{1}{8}[{\rm sym}_4(\theta)-{\rm sym}_3(\theta)\cos \theta]+\frac{1}{2}\cos(2\theta)+\frac{3}{8}.
\eea

The following expression will arise in our expansion, so we analyze it first:
\bea -\frac{1}{8}{\rm sym}_3(\theta)\cos \theta&\ = \ &-\frac{1}{8}\frac{\sin(4\theta)}{\sin \theta}\cos \theta \nonumber\\
 &\ = \ &-\frac{1}{8}\frac{2\sin(2\theta)\cos(2\theta)}{\sin \theta}\cos \theta \nonumber\\
 &\ = \ &-\frac{1}{8}\frac{2\cdot 2\sin \theta \cos \theta \cos(2\theta)}{\sin \theta}\cos \theta \nonumber\\
 &\ = \ &-\frac{1}{2}\cos^2 \theta \cos(2\theta)\nonumber\\
 &\ = \ &-\frac{1}{2}\cos^2 \theta (2\cos^2 \theta - 1)\nonumber\\
 &\ = \ &-\cos^4 \theta+\frac{1}{2}\cos^2 \theta \nonumber\\
16p^2\cdot \left(-\frac{1}{8}{\rm sym}_3(\theta)\cos \theta\right)&\ = \ &-16p^2\cos^4 \theta+8p^2\cos^2 \theta \nonumber\\
&\ = \ & -16p^2\cos^4 \theta+2p\cdot {a_t}^2(p).
\eea

Thus
\bea
16p^2\cos^4 \theta&\ = \ & 2p^2{\rm sym}_4\theta -16p^2\cos^4 \theta +2p\cdot {a_t}^2(p)+4p\cdot {a_t}^2(p)-2p^2\nonumber\\
2\cdot(16p^2\cos^4 \theta)&\ = \ & 2p^2{\rm sym}_4\theta+ 6p\cdot {a_t}^2(p)-2p^2\nonumber\\
\sum_{t(p)}(16p^2\cos^4 \theta)&\ = \ & p^2\sum_{t_(p)}{\rm sym}_4\theta+ 3p\sum_{t_(p)} {a_t}^2(p)-p^3\nonumber\\
\sum_{t(p)}{a_t}^4(p)&\ = \ &p^2\cdot O(\sqrt{p})+3p(p^2+O(p^\frac{3}{2}))-p^3 \nonumber\\
&\ = \ &2p^3+O(p^\frac{5}{2}),
\eea
as claimed. \end{proof}

\subsection{Form of 6th moments sums}\label{6th_moment_form}

\begin{lemma} The average sixth moment of an elliptic surface with $j(T)$ non-constant has main term $5p^3$: \be \sum_{t(p)}{a_t}^6(p)\ = \  5p^4+O(p^{\frac{7}{2}}).\ee
\end{lemma}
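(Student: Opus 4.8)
The plan is to mimic exactly the argument used for the fourth moment, replacing $\cos^4\theta$ by $\cos^6\theta$ and expanding it in terms of $\mathrm{sym}_k(\theta)$ for $k \le 6$ together with lower even moments. First I would write $a_t^6(p) = 64p^3\cos^6\theta_t(p)$ via the Hasse parametrization $a_t(p) = 2\sqrt{p}\cos\theta_t(p)$, so the whole problem reduces to a trigonometric identity plus Michel's bound $\sum_{t(p)}\mathrm{sym}_k(\theta_t(p)) = O(\sqrt p)$.

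Next I would establish the needed trig identity. Using $\cos^2\theta = \tfrac12\cos 2\theta + \tfrac12$ repeatedly (or the power-reduction formula directly), one gets
\begin{eqnarray}
\cos^6\theta \ = \ \frac{1}{32}\cos 6\theta + \frac{3}{16}\cos 4\theta + \frac{15}{32}\cos 2\theta + \frac{5}{16}.
\end{eqnarray}
Then I would convert each $\cos(2k\theta)$ into $\mathrm{sym}$-terms. Just as in the fourth-moment proof, $\cos(2k\theta) = \mathrm{sym}_{2k}(\theta) - \mathrm{sym}_{2k-1}(\theta)\cos\theta$, and each product $\mathrm{sym}_{2k-1}(\theta)\cos\theta$ can be simplified through the double-angle identities for sine into a polynomial in $\cos^2\theta$ — concretely, $\mathrm{sym}_{2k-1}(\theta)\cos\theta = \sin(2k\theta)\cos\theta/\sin\theta$, and repeatedly applying $\sin(2m\theta) = 2\sin(m\theta)\cos(m\theta)$ reduces this to a product of $\cos$'s of halved arguments, which are then re-expanded via $\cos 2\theta = 2\cos^2\theta - 1$. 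This expresses everything in terms of $\mathrm{sym}_6$, $\mathrm{sym}_4$, $\mathrm{sym}_2$ (each $O(\sqrt p)$ after summing and multiplying by the relevant power of $p$), the quantities $a_t^2(p)$ and $a_t^4(p)$ (whose $t$-sums we already control: $\sum a_t^2 = p^2 + O(p^{3/2})$ by Michel's theorem quoted earlier, $\sum a_t^4 = 2p^3 + O(p^{5/2})$ by the previous lemma), and constants. Collecting the coefficient of $64p^3\cos^6\theta = a_t^6(p)$, solving for it, summing over $t \bmod p$, and inserting the known lower moments should yield $\sum_{t(p)} a_t^6(p) = 5p^4 + O(p^{7/2})$, with the leading constant $5 = \tfrac{1}{3}\binom{6}{3}$ being the third Catalan number as predicted.

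The main obstacle is purely bookkeeping: carefully tracking the several $\mathrm{sym}_{2k-1}(\theta)\cos\theta$ reductions without sign or coefficient errors, and making sure that when one solves for $64p^3\cos^6\theta$ the $\mathrm{sym}_6$ term does not cancel (so that it genuinely contributes only at the $O(p^{7/2})$ level after multiplication by $p^3$ and Michel's $O(\sqrt p)$), and that the $a_t^4$ and $a_t^2$ contributions combine to give exactly $5p^4$ in the main term. A useful sanity check along the way is to verify the identity numerically at a few angles, and to confirm the main-term constant against the semicircle/Catalan prediction $p, 2p^2, 5p^3, 14p^4$ mentioned before the fourth-moment lemma. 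No genuinely new idea beyond the fourth-moment argument is required.
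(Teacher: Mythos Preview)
Your proposal is correct and follows essentially the same route as the paper: write $a_t^6(p)=64p^3\cos^6\theta_t(p)$, use the power-reduction identity $\cos^6\theta=\tfrac{1}{32}\cos6\theta+\tfrac{3}{16}\cos4\theta+\tfrac{15}{32}\cos2\theta+\tfrac{5}{16}$, replace the $\cos(2k\theta)$ terms by ${\rm sym}_{2k}(\theta)-{\rm sym}_{2k-1}(\theta)\cos\theta$ and reduce the latter to polynomials in $\cos^2\theta$, then solve for $64p^3\cos^6\theta$, sum over $t$, and invoke Michel's $O(\sqrt p)$ bound together with the already-established second and fourth moments. The only cosmetic differences are that the paper derives the $\cos^6\theta$ expansion by squaring $\cos^3\theta=(\cos3\theta+3\cos\theta)/4$, handles $\cos2\theta$ directly via $2\cos^2\theta-1$ rather than introducing ${\rm sym}_2$, and reduces ${\rm sym}_5(\theta)\cos\theta$ using the triple-angle formula for $\sin6\theta$ rather than repeated halving.
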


\begin{proof} We have
\bea {a_t}^6(p) &\ = \ & 64p^3\cos^6 \theta_t(p) \nonumber\\
 \cos(3\theta)&\ = \ &4\cos^3 \theta-3\cos \theta \nonumber\\
\cos^3\theta&\ = \ &\frac{\cos(3\theta)+3\cos \theta}{4}.\eea

We first expand $\cos^6\theta$:
\bea
\cos^6\theta&\ = \ &\frac{\cos^2\left(3\theta\right)+9\cos^2\theta+6\cos\theta\cos\left(3\theta\right)}{16} \nonumber\\
&\ = \ &\frac{\frac{1}{2}\cos\left(6\theta\right)+\frac{1}{2}+9[\frac{1}{2}\cos\left(2\theta\right)+\frac{1}{2}]+6\cos\theta[4\cos^3\theta-3\cos\theta]}{16} \nonumber\\
&\ = \ &\frac{10+\cos\left(6\theta\right)+9\cos\left(2\theta\right)+48\cos^4\theta-36\cos^2\theta}{32} \nonumber\\
&\ = \ &\frac{10+\cos\left(6\theta\right)+9\cos\left(2\theta\right)+48[\frac{1}{8}\cos\left(4\theta\right)+\frac{1}{2}\cos\left(2\theta\right)+\frac{3}{8}]-36\cos^2\theta}{32} \nonumber\\
&\ = \ &\frac{10+\cos\left(6\theta\right)+9\cos\left(2\theta\right)+48[\frac{1}{8}\cos\left(4\theta\right)+\frac{1}{2}\cos\left(2\theta\right)+\frac{3}{8}]-18\cos\left(2\theta\right)-18}{32} \nonumber\\
&\ = \ &\frac{10+\cos\left(6\theta\right)+6\cos\left(4\theta\right)+15\cos\left(2\theta\right)}{32} \nonumber\\
&\ = \ &\frac{\cos\left(6\theta\right)}{32}+\frac{10+6\cos\left(4\theta\right)+15\cos\left(2\theta\right)}{32} \nonumber\\
&\ = \ &\frac{{\rm sym}_6\left(\theta\right)-{\rm sym}_5\left(\theta\right)\cos\theta}{32}+\frac{10+6\cos\left(4\theta\right)+15\cos\left(2\theta\right)}{32}.
\eea

Next we find a formula for the symmetric function that will appear:
\bea
-\frac{1}{32}{\rm sym}_5\left(\theta\right)\cos \theta &\ = \ & -\frac{1}{32} \left(\frac{\sin\left(6\theta\right)}{\sin \theta}\right)\cos \theta\nonumber\\
&\ = \ & -\frac{1}{32}\cos \theta \left(\frac{3\sin\left(2\theta\right)-4\sin^3\left(2\theta\right)}{\sin \theta}\right) \nonumber\\
&\ = \ & -\frac{1}{32}\cos \theta \left(\frac{6\sin\theta\cos\theta-32\sin^3\theta\cos^3\theta}{\sin \theta}\right)\nonumber\\
&\ = \ & -\frac{1}{32}\cos \theta [6\cos\theta-32\sin^2\theta\cos^3\theta] \nonumber\\
&\ = \ & -\frac{3}{16}\cos^2\theta +\left(1-\cos^2\theta\right)\cos^4\theta \nonumber\\
&\ = \ & -\frac{3}{16}\cos^2\theta +\cos^4\theta-\cos^6\theta \nonumber\\
64p^3\left(-\frac{1}{32}{\rm sym}_5\left(\theta\right)\cos \theta\right) &\ = \ &-12p^3\cos^2\theta+64p^3\cos^4\theta-64p^3\cos^6\theta \nonumber\\
&\ = \ &-64p^3\cos^6\theta+4p{a_t}^4\left(p\right)-3p^2{a_t}^2\left(p\right).
\eea
Thus
\bea
64p^3\cos^6\theta& = &2p^3{\rm sym}_6\left(\theta\right)-64p^3\cos^6\theta+4p{a_t}^4\left(p\right)-3p^2{a_t}^2\left(p\right) \nonumber\\ & & \ \ \ \ \ \ +\ 12p^3\cos\left(4\theta\right)+30p^3\cos\left(2\theta\right)+20p^3\nonumber\\
64p^3\cos^6\theta& =  &p^3{\rm sym}_6\left(\theta\right)+2p{a_t}^4\left(p\right)-\frac{3}{2}p^2{a_t}^2\left(p\right)+6p^3\cos\left(4\theta\right)+15p^3\cos\left(2\theta\right)+10p^3.
\eea

We can re-express some of the terms above in a more convenient form:
\bea
15p^3\cos\left(2\theta\right)&\ = \ &15p^3\left(2\cos^2\theta-1\right) \nonumber\\
&\ = \ &30p^3\cos^2\theta-15p^3 \nonumber\\
&\ = \ &\frac{15}{2}p^2{a_t}^2\left(p\right)-15p^3
\eea
and
\bea
6p^3\cos\left(4\theta\right)&\ = \ &6p^3[{\rm sym}_4\left(\theta\right)-{\rm sym}_3\left(\theta\right)\cos\theta] \nonumber\\
&\ = \ &6p^3[{\rm sym}_4\left(\theta\right)-8\cos^4\theta+4\cos^2\theta] \nonumber\\
&\ = \ &6p^3{\rm sym}_4\left(\theta\right)-3p{a_t}^4\left(p\right)+6p^2{a_t}^2\left(p\right).
\eea

Thus
\bea
64p^3\cos^6\theta & \ = \ & p^3{\rm sym}_6\left(\theta\right)+2p{a_t}^4\left(p\right)-\frac{3}{2}p^2{a_t}^2\left(p\right)+6p^3{\rm sym}_4\left(\theta\right) -3p{a_t}^4\left(p\right)+6p^2{a_t}^2\left(p\right)\nonumber\\ & & \ \ \ \ \ +\ \frac{15}{2}p^2{a_t}^2\left(p\right)-15p^3+10p^3 \nonumber\\
\sum_{t\left(p\right)}64p^3\cos^6\theta &\ = \  & \sum_{t\left(p\right)}[p^3{\rm sym}_6\left(\theta\right)+2p{a_t}^4\left(p\right)-\frac{3}{2}p^2{a_t}^2\left(p\right)+6p^3{\rm sym}_4\left(\theta\right)-3p{a_t}^4\left(p\right)+6p^2{a_t}^2\left(p\right)\nonumber\\ & & \ \ \ \ \ +\ \frac{15}{2}p^2{a_t}^2\left(p\right)-15p^3+10p^3].
\eea

Therefore
\bea
\sum_{t\left(p\right)}{a_t}^6\left(p\right)&\ = \ &p^3\sum_{t\left(p\right)}{\rm sym}_6\left(\theta\right)+6p^3\sum_{t\left(p\right)}{\rm sym}_4\left(\theta\right)-p\sum_{t\left(p\right)}{a_t}^4\left(p\right)+12p^2\sum_{t\left(p\right)}{a_t}^2\left(p\right)-5p^4 \nonumber\\
&\ = \ & p^3O\left(\sqrt{p}\right)+6p^3O\left(\sqrt{p}\right)-p\left(2p^3+O\left(p^{\frac{5}{2}}\right)\right)+12p^2\left(p^2+O\left(p^{\frac{3}{2}}\right)\right)-5p^4 \nonumber\\
&\ = \ & 5p^4+O\left(p^{\frac{7}{2}}\right),
\eea completing the proof. \end{proof}


{\footnotesize

{\footnotesize
\medskip
\medskip
\vspace*{1mm}

\noindent {\it Steven J. Miller}\\
Williams College \\
880 Main St \\
Williamstown, MA 01267 \\
E-mail: {\tt sjm1@williams.edu, Steven.Miller.MC.96@aya.yale.edu}\\ \\

\noindent {\it Yan Weng}\\
Peddie School\\
201 S Main St\\
Hightstown, NJ 08520\\
E-mail: {\tt yweng-22@peddie.org}\\  \\
}

\vspace*{1mm}\noindent\footnotesize{\date{ {\bf Received}: April 31, 2017\;\;\;{\bf Accepted}: June 31, 2017}}\\
\vspace*{1mm}\noindent\footnotesize{\date{  {\bf Communicated by Some Editor}}}

\end{document}